\newtheorem{Proposition}{Proposition}
\newtheorem{Question}{Question}
\newtheorem{Lemma}{Lemma}
\newtheorem{Theorem}{Theorem}
\newcommand{\proj}{\mathbb{P}}
\newcommand{\rarr}{\rightarrow}
\newcommand{\oh}{{\mathcal{O}}}
\newcommand{\com}{\mathbb{C}}
\newcommand{\hpo}{{\hspace{1pt}}}
\newcommand{\bpf}{\noindent {\em Proof.} }
\newcommand{\epf}{\qed \vspace{+10pt}}
\newcommand{\Ext}{\text{Ext}}
\begin{document}
\baselineskip=16pt

\title{The moduli space of stable quotients}
\author{A. Marian, D. Oprea, and R. Pandharipande}
\date{March 2011}
\dedicatory{Dedicated to William Fulton on the occasion of his 70th
birthday}

\begin{abstract} A moduli space of stable quotients
of the rank $n$ trivial sheaf on stable curves is introduced.
Over nonsingular curves, the moduli space is
Grothendieck's Quot scheme. Over nodal curves, a 
relative construction is made to keep the torsion of
the quotient away from the singularities. 
New compactifications
of classical spaces arise naturally: a 
nonsingular and irreducible 
compactification of the moduli of maps from
genus 1 curves to projective space is obtained.
Localization on the moduli of stable quotients
leads to new relations in the
tautological ring 
generalizing Brill-Noether constructions.

The moduli space of stable quotients is proven to carry
a canonical 2-term obstruction theory and
thus a virtual class.
The resulting system of descendent invariants is
proven to equal the Gromov-Witten theory of
the Grassmannian in all genera. Stable quotients
can also be used to study Calabi-Yau geometries.
The conifold is calculated to agree with stable
maps. Several questions about the behavior of
stable quotients for arbitrary targets are raised.

\end{abstract}

\maketitle
\setcounter{tocdepth}{1}
\tableofcontents

\section{Introduction}
\subsection{Virtual classes}
Only a few compact moduli spaces in algebraic geometry
carry virtual classes. The conditions placed on the
associated deformation theories are rather strong.
The principal cases  (so far) are:
\begin{enumerate}
\item[(i)]
stable maps to nonsingular varieties \cite{BF,KKO,LT}, 
\item[(ii)]
stable sheaves
 on nonsingular 3-folds \cite{PT,T}, 
\item[(iii)]
stable sheaves
on nonsingular surfaces \cite {LT},  
\item[(iv)] Grothendieck's Quot scheme on 
nonsingular curves \cite{CFK,MO}.
\end{enumerate}
Of the above four families, the first three
are understood to be related. The correspondences
of \cite{MNOP1,MNOP2,PT} relate (i) and (ii). The
connections \cite{Park,Taub}
 between Gromov-Witten invariants and Donaldson/Seiberg-Witten
invariants  relate (i) and (iii). 
For equivalence with (ii) and (iii), the
associated  Gromov-Witten theories must be
considered with domains varying
in the moduli of stable curves $\overline{M}_{g}$.

The construction of the virtual class of the
Quot scheme (iv) requires the curve $C$ to be fixed
in moduli. In fact, the Quot scheme of a nodal curve
does {\em not} carry a virtual class via
the standard deformation theory. In order to fully
connect (i) and (iv), new moduli spaces are required.

\subsection{Stable quotients}
We introduce here a moduli space of {\em stable
quotients} 
$$\com^n \otimes \oh_C \rarr Q \rarr 0$$
on $m$-pointed curves $C$ with (at worst) nodal singularities. Two basic properties are
satisfied:
\begin{enumerate}
\item[$\bullet$] the quotient sheaf $Q$ is locally free at the nodes and markings of $C$,
\item[$\bullet$] the moduli of stable quotients
is proper over $\overline{M}_{g,m}$.
\end{enumerate}
The first property yields a virtual class, and the
second property leads to a system of 
invariants over $\overline{M}_{g,m}$.
Our main result equates the descendent theory of
the moduli of stable quotients to the Gromov-Witten
theory of the Grassmannian in all genera.

Stable quotients are defined
in Section \ref{defff}. The basic structures
of the moduli space
(including the virtual class) are discussed in
Section \ref{strrr}. The important case of
mapping to a point is studied in Section \ref{pttt}.
Comparison results with the Gromov-Witten theory
of Grassmannians in the strongest equivariant form are
stated in Section \ref{gwcomp}. The construction of
the moduli of stable quotients and
proofs of the comparison results are presented in
Section \ref{pppr} - \ref{ccccc}.

The intersection theory of the moduli of stable
quotients leads to new tautological relations
on the moduli of curves. Basic
relations generalizing classical
Brill-Noether constructions
are presented in Section \ref{ttaa6}.

Stable quotients can also be used to study
Calabi-Yau geometries. The most accessible are 
the local toric cases.
The conifold, given by the total space of
$$\mathcal{O}_{\proj^1}(-1) \oplus 
\mathcal{O}_{\proj^1}(-1) \rarr \proj^1,$$
is calculated in Section \ref{lllll} and found
to agree exactly with Gromov-Witten theory.

Given a projective embedding of an arbitrary scheme
 $$X\subset \proj^{n},$$
a moduli space of
stable quotients associated to $X$ is defined in Section \ref{varr3}.
We speculate,
at least when $X$ is a nonsingular complete intersection, that the 
moduli spaces carry virtual classes in all genera.
Virtual classes may exist in even greater
generality.

Stable quotient invariants in genus 1 for
Calabi-Yau hypersurfaces are discussed in Section \ref{elt}.
Let 
$$M_1(\proj^{n},d) \subset \overline{M}_1(\proj^{n},d)$$
be the open locus of the moduli of stable maps 
with nonsingular irreducible domain curves. 
Stable quotients
provide a nonsingular{\footnote{Nonsingularity
here is as a Deligne-Mumford stack.}}, 
irreducible, modular compactification
$$M_1(\proj^{n},d)\subset \overline{Q}_1(\proj^{n},d).$$
For the Calabi-Yau 
hypersurface of degree $n+1$,
$$X_{n+1}\subset \proj^n,$$
genus 1 invariants 
can be defined naturally as an Euler characteristic
of a rank $(n+1)d$ vector bundle on 
$\overline{Q}_1(\proj^{n},d)$. The relationship
to the Gromov-Witten invariants of $X_{n+1}$ is
not yet clear, but there will likely be a transformation.

The paper ends with
several questions about the behavior of 
stable quotients. 
Certainly, our main results carry over
to the hyperquot schemes associated to $\mathbf{SL}_n$-flag
varieties. Other variants are discussed in Section \ref{vart}. 
The toric case has been addressed in \cite{tor}.

\subsection{Later work}
Tautological relations coming from the stable quotient geometry, similar to those presented in Section  \ref{ttaa6}, are studied in \cite{PP}
on the moduli spaces $M_{g,n}^c$ of marked curves of compact type.  A Wick formalism is developed in order to evaluate the relations explicitly in terms of 
$\kappa$ classes.
The main results for $n>0$ are:
\begin{enumerate}
\item[(i)]
the $\kappa$ rings $\kappa^*(M_{g,n}^c)$ are generated by $\kappa$ classes of degree at most $g-1+ \lfloor \frac{n}{2} \rfloor $,
\item[(ii)]
there are no relations between the kappa classes below the threshold degree,
\item[(iii)] there is a natural isomorphism
$$ \kappa^*(M_{0,2g+n}^c)\stackrel{\sim}{\longrightarrow} \kappa^*(M_{g,n}^c)\ . $$
\end{enumerate}
Result (iii) is used to completely determine the $\kappa$ rings
including formulas for their Betti numbers.

A detailed study of the stable quotient relations on $M_g$ is undertaken 
in \cite{PPP}. The virtual class of the stable quotient space 
can be viewed as a new object in the classical theory of
linear systems on curves.
Using the Wick formalism and a series of transformations, 
  the stable quotient relations are recast to prove
an elegant set of relations in $R^*(M_g)$
conjectured by Faber and Zagier a decade ago.
Whether the Faber-Zagier relations are a complete set for
$R^*(M_g)$ is an interesting question. For $g\leq 23$, there
are no further relations. No further relations have been found
in any genus, but
by calculations of Faber, the set
does not yield a Gorenstein ring in genus 24. 

\vskip.05in

Finally, stable quotients should be considered to lie between
stable maps to the Grassmannian and 
stable sheaves relatively over $\overline{M}_g$
 \cite{PP3}.
Recent wall-crossing methods \cite{J,KS} will likely be relevant
to the study. A step in this direction is taken in \cite{Toda}: a series of moduli spaces is constructed, depending on a stability parameter 
and interpolating between the stable quotient and the stable map spaces.
Several further directions which have stable quotients as
their starting point are \cite{tor,C,Mano}.

\subsection{Acknowledgments}
The results presented here were obtained at MSRI in 
2009 
during a program on modern moduli in algebraic geometry. 
We thank the organizers
for creating a stimulating environment.
Conversations with D. Abramovich, I. Ciocan-Fontanine, Y. Cooper,
C. Faber, D. Maulik,
R. Thomas, and A. Zinger
have led to many improvements.

A.M. was partially supported by DMS-0812030. D.O. was partially
supported by DMS-0852468. R.P. was partially supported by DMS-0500187
and the Clay Institute. The paper was written while R.P. was
visiting the Instituto Superior T\'ecnico in Lisbon.

\section{Stability}\label{defff}
\subsection{Curves}
A {\em curve} is a reduced and connected scheme over
$\com$ of pure dimension 1.
Let $C$ be a curve of
arithmetic genus $$g= h^1(C,\oh_C)$$
with at worst nodal singularities.
Let $$C^{ns}\subset C$$
denote the nonsingular locus.
The data $(C,p_1,\ldots,p_m)$
with distinct markings $p_i\in C^{ns}$ determine a
genus $g$, $m$-pointed, {\em quasi-stable curve}.
A quasi-stable curve is {\em stable} if
$\omega_C(p_1+\ldots+p_m)$ is ample.

\subsection{Quotients}
Let $q$ be a quotient of the trivial bundle
 on a pointed quasi-stable curve $C$,
\begin{equation*}
\com^n \otimes \oh_C \stackrel{q}{\rarr} Q \rarr 0.
\end{equation*}
If $Q$ is locally free at the nodes and markings of $C$,
$q$ is a {\em quasi-stable quotient}.
Quasi-stability of $q$ implies
\begin{enumerate}
\item[(i)]
 the torsion subsheaf $\tau(Q)\subset Q$ 
has support contained in $$C^{ns}\setminus\{p_1,\ldots,p_m\},$$
\item[(ii)]
the associated
kernel,
\begin{equation*}
0 \rightarrow S \rightarrow
\com^n \otimes \oh_C \stackrel{q}{\rarr} Q \rarr 0,
\end{equation*}
is a locally free sheaf on $C$. 
\end{enumerate}
Let $r$ 
denote the rank of $S$.

Let $(C,p_1,\ldots,p_m)$ be a quasi-stable curve equipped
with a quasi-stable quotient $q$.
The data $(C,p_1,\ldots,p_m,q)$ determine 
a {\em stable quotient} if
the $\mathbb{Q}$-line bundle 
\begin{equation}\label{aam}
\omega_C(p_1+\ldots+p_m)
\otimes (\wedge^{r} S^*)^{\otimes \epsilon}
\end{equation}
is ample 
on $C$ for every strictly positive $\epsilon\in \mathbb{Q}$.
Quotient stability implies
$2g-2+m \geq 0$.

Viewed in concrete terms, no amount of positivity of
$S^*$ can stabilize a genus 0 component 
$$\proj^1\stackrel{\sim}{=}P \subset C$$
unless $P$ contains at least 2 nodes or markings.
If $P$ contains exactly 2 nodes or markings,
then $S^*$ {\em must} have positive degree.

Of course, when considering stable quotients in families,
flatness over the base is imposed on both the curve
$C$ and the quotient sheaf $Q$.

\subsection{Isomorphisms}
Let $(C,p_1,\ldots,p_m)$ be a quasi-stable curve.
Two quasi-stable quotients
\begin{equation}\label{fpp22}
\com^n \otimes \oh_C \stackrel{q}{\rarr} Q \rarr 0,\ \ \
\com^n \otimes \oh_C \stackrel{q'}{\rarr} Q' \rarr 0
\end{equation}
on $C$ 
are {\em strongly isomorphic} if
the associated kernels 
$$S,S'\subset \com^n \otimes \oh_C$$
are equal.

An {\em isomorphism} of quasi-stable quotients
 $$\phi:(C,p_1,\ldots,p_m,q)\rarr
(C',p'_1,\ldots,p'_m,q')
$$ is
an isomorphism of curves
$$\phi: C \stackrel{\sim}{\rarr} C'$$
satisfying
\begin{enumerate}
\item[(i)] $\phi(p_i)=p'_i$ for $1\leq i \leq m$,
\item[(ii)] the quotients $q$ and $\phi^*(q')$ 
are strongly isomorphic.
\end{enumerate}
Quasi-stable quotients \eqref{fpp22} on the same
curve $C$
may be isomorphic without being strongly isomorphic.

\begin{Theorem} The moduli space of stable quotients 
$\overline{Q}_{g,m}({\mathbb{G}}(r,n),d)$ parameterizing the
data
$$(C,\ p_1,\ldots, p_m,\  0\rarr S \rarr
\com^n\otimes \oh_C \stackrel{q}{\rarr} Q \rarr 0),$$
with {\em rank}$(S)=r$ and {\em deg}$(S)=-d$,
is a separated and proper Deligne-Mumford stack of finite type
over $\com$.
\end{Theorem}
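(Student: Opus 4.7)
The plan is to realize $\overline{Q}_{g,m}(\mathbb{G}(r,n),d)$ as a locally closed substack of a relative Quot construction over the Artin stack $\mathfrak{M}_{g,m}$ of $m$-pointed prestable curves of genus $g$, and then verify the stacky hypotheses via valuative criteria. Concretely, form the relative Grothendieck Quot scheme of the trivial rank $n$ bundle on the universal curve $\pi\colon \mathcal{C} \to \mathfrak{M}_{g,m}$, with Hilbert polynomial fixed by $(g,r,n,d)$. This is a Deligne--Mumford stack over $\mathfrak{M}_{g,m}$. The conditions that the kernel $S$ be locally free of rank $r$ at nodes and markings, and that the $\Q$-line bundle \eqref{aam} be ample for all $\epsilon>0$, are both open, and their intersection carves out $\overline{Q}_{g,m}(\mathbb{G}(r,n),d)$.

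Boundedness follows once one observes that condition \eqref{aam} with $\epsilon$ small bounds the number of unstable $\proj^1$-components of $C$: each such component must carry strictly positive $S^*$-degree, while $\deg S^*$ is fixed at $d$. After contracting these rational tails and bridges, the residual curve is stable of genus $g$, hence lies in the finite-type stack $\mgn$; the Quot fibers over this bounded family of underlying curves are themselves bounded by Grothendieck. The Deligne--Mumford property is immediate: an automorphism of $(C,p_i,q)$ is an automorphism of the pointed curve fixing $S \subset \com^n \otimes \oh_C$, hence fixing the line bundle \eqref{aam}; ampleness then forces the automorphism group to be finite.

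The substantial work is the valuative criterion. Let $R$ be a DVR with fraction field $K$ and residue field $k$. Given a stable quotient over $\mathrm{Spec}\, K$, I would first extend the underlying pointed curve to a prestable family over $R$, using semistable reduction after a finite base change if necessary. The subsheaf $S_K \subset \com^n\otimes \oh_{\mathcal{C}_K}$ then extends uniquely to a saturated subsheaf $S_R$, but the special fiber of $S_R$ need not be locally free at the nodes of $C_k$, nor need $\wedge^r S_R^*|_{C_k}$ satisfy the ampleness condition. Both failures are remedied by a canonical sequence of blow-ups at the offending points of $C_k$, introducing chains of rational curves on which $S^*$ carries precisely the positive degree needed; the condition that \eqref{aam} be ample for \emph{every} $\epsilon>0$ is exactly what pins down this modification. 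This produces an extension, and the same argument applied in reverse yields uniqueness, giving both properness and separatedness.

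The principal obstacle is this modification step in the properness argument: one must show that the blow-up procedure terminates, produces a quasi-stable curve carrying a genuine stable quotient, and does so canonically. Once the local analysis at each singular or torsion point of the extended family is controlled, the remaining Deligne--Mumford stack properties follow from standard descent.
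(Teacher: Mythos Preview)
Your overall strategy is sound and close to the paper's, but the construction differs: the paper does not work over the Artin stack $\mathfrak{M}_{g,m}$. Instead it fixes $f=k(d+1)$ large enough that $\mathcal{L}^f = (\omega_C(\sum p_i))^{k(d+1)}\otimes(\wedge^r S^*)^k$ is very ample with vanishing $H^1$, embeds every stable quotient into a fixed $\mathbb{P}(\mathsf{V})$, and then exhibits $\overline{Q}_{g,m}(\mathbb{G}(r,n),d)$ as a global quotient $[\mathcal{Q}'/\mathbf{PGL}(\mathsf{V})]$ of a locally closed subscheme of $\mathsf{Hilb}\times\mathbb{P}(\mathsf{V})^m\times\mathsf{Quot}$. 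This buys finite type and the Deligne--Mumford property simultaneously from classical GIT-style reasoning, without invoking Artin-stack foundations. Your route via the relative Quot over $\mathfrak{M}_{g,m}$ is a legitimate alternative, but note that the relative Quot is only representable over $\mathfrak{M}_{g,m}$, hence an Artin stack a priori; the Deligne--Mumford property only holds after restricting to the stable locus, as you later observe.

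Your properness sketch has two genuine gaps beyond what you flag. First, the blow-ups at nodes do \emph{not} in general produce chains on which $S^*$ has strictly positive degree; many of the exceptional $\mathbb{P}^1$'s carry $S^*$ of degree zero and are therefore unstable. The paper handles this by a separate \emph{contraction} step: the line bundle $\mathcal{L}$ is trivial on precisely these components, and one passes to $\mathrm{Proj}(\oplus_m \mathcal{L}^{km})$ to blow them down. Second, repeated blow-ups at nodes typically make the central fiber non-reduced; the paper inserts a further base change and normalization to restore a reduced semistable model before contracting. Both steps are essential and not visible in your outline.

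Finally, separatedness is not ``the same argument applied in reverse.'' The paper's argument takes two families over $\Delta$ agreeing on $\Delta^0$, dominates both by a common semistable model $\mathcal{Y}$ after base change, pulls back both quotient sequences to $\mathcal{Y}$, and uses separatedness of the Quot functor together with the stability condition to rule out components of $\mathcal{Y}_0$ contracted in one family but not the other. This is a distinct (and shorter) argument from the existence half.
\qed
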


Theorem 1 is obtained by mixing the construction of
the moduli of stable curves with the Quot scheme. 
Keeping the torsion of the quotient away from the nodes
and markings is a twist motivated by relative 
geometry.
The proof of Theorem 1 is given in Section \ref{pppr}.

\subsection{Automorphisms}

The automorphism group $\mathsf{A}_C$ of
a quasi-stable curve $(C,p_1,\ldots,p_m)$ 
may be positive dimensional.
If the dimension is $0$, $\mathsf{A}_C$
is finite.
Stability of $(C,p_1,\ldots,p_m)$ is well-known
to be equivalent to
the finiteness of $\mathsf{A}_C$.  
If $(C,p_1,\ldots,p_m,q)$ is a stable quotient,
the ampleness condition \eqref{aam}
implies that the marked curve $(C, p_1, \ldots, p_m)$ is semistable. Then, the connected component of the automorphism group $\mathsf{A}_C$
is a torus. \footnote {We assume $(g, m)\neq (1, 0)$.}

An {\em automorphism} of 
a quasi-stable quotient $(C,p_1,\ldots,p_m,q)$
is a  self-isomorphism. The automorphism
group  $\mathsf{A}_q$ of the quasi-stable quotient $q$
embeds in the automorphism group of the
underlying curve 
$$\mathsf{A}_q \subset \mathsf{A}_C.$$
We leave the proof of the following
elementary result to the reader.

\begin{Lemma}
Let $(C,p_1,\ldots,p_m,q)$ be a quasi-stable quotient such that $(C, p_1, \ldots, p_m)$ is semistable. Then $q$ is stable if and only if $\mathsf{A}_q$ is
finite.
\end{Lemma}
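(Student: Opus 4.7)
The plan is to reduce both ``$q$ stable'' and ``$\mathsf{A}_q$ finite'' to the same combinatorial condition on the unstable rational bridges of $C$, i.e.\ the irreducible components $P \cong \mathbb{P}^1$ carrying exactly two special points (nodes or markings). On every other component $\omega_C(p_1+\ldots+p_m)$ already has positive degree, and because $(C, p_1,\ldots,p_m)$ is semistable (with $(g,m)\neq (1,0)$ by the footnote), the identity component $\mathsf{A}_C^0 \subset \mathsf{A}_C$ is a torus whose $\mathbb{C}^*$-factors correspond bijectively to these bridges.

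The first step is to translate the ampleness condition \eqref{aam} into a condition on each bridge. A line bundle on a reduced nodal curve is ample iff its degree is positive on every irreducible component. On any $\mathbb{P}^1$-component of $C$, writing $S \cong \bigoplus \oh(-a_i)$, one has $a_i \geq 0$ because there is no non-zero map $\oh(k) \rarr \oh$ with $k > 0$; equivalently $\deg(\wedge^r S^*|_P) \geq 0$. Consequently, for $\epsilon > 0$ sufficiently small, $\omega_C(p_1+\ldots+p_m) \otimes (\wedge^r S^*)^{\epsilon}$ is automatically positive on non-bridge components, and stability of $q$ is equivalent to $\deg(\wedge^r S^*|_P) > 0$ on every unstable bridge $P$.

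The second step is to determine when a $\mathbb{C}^*$-factor of $\mathsf{A}_C^0$ corresponding to a bridge $P$ lifts to $\mathsf{A}_q$, i.e.\ preserves the subsheaf $S$. Via the classifying map, the inclusion $S|_P \hookrightarrow \com^n \otimes \oh_P$ is the same datum as a morphism $f_P : P \rarr \mathbb{G}(r, n)$ of degree $\deg(\wedge^r S^*|_P)$. Equipping $\com^n \otimes \oh_P$ with the trivial $\mathbb{C}^*$-linearization, the induced $\mathbb{C}^*$-action on $\mathbb{G}(r,n)$ is trivial, so $\phi_\lambda^*(S|_P) = S|_P$ for every $\lambda \in \mathbb{C}^*$ iff $f_P$ is constant on $\mathbb{C}^*$-orbits iff $f_P$ is constant iff $\deg f_P = 0$. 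Conversely, if $\deg(\wedge^r S^*|_P) = 0$, then $S|_P = V_0 \otimes \oh_P$ for a subspace $V_0 \subset \com^n$, and $\mathbb{C}^*$ visibly preserves it.

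Combining the two steps: an unstable bridge $P$ contributes a non-trivial $\mathbb{C}^*$ to $\mathsf{A}_q^0$ iff $\deg(\wedge^r S^*|_P) = 0$, precisely the condition under which the ampleness criterion fails on $P$. Hence $q$ is stable iff $\mathsf{A}_q^0$ is trivial iff $\mathsf{A}_q$ is finite (as $\mathsf{A}_q \subset \mathsf{A}_C$ is of finite type). The main delicacy will be the equivariance analysis on a bridge: a priori a lift of the $\mathbb{C}^*$ to $\com^n \otimes \oh_P$ is only determined up to a character of $\mathbb{C}^*$ on $\com^n$, but any such twist acts on $\mathbb{G}(r, n)$ through a trivial quotient and so does not affect which subsheaves are preserved; this reduces the problem to the degree-zero analysis above.
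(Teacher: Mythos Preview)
The paper leaves this lemma to the reader, so there is no proof to compare against. Your overall strategy --- reducing both conditions to the strictly semistable rational components, using that $\mathsf{A}_C^0$ is a torus with one $\com^*$-factor per such bridge, and checking ampleness versus $\com^*$-invariance bridge by bridge --- is the natural one, and your Step~1 reduction (including the observation that $\deg(\wedge^r S^*)\geq 0$ on every component) is correct.

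There is one genuine slip in Step~2. The assertion that ``the inclusion $S|_P \hookrightarrow \com^n \otimes \oh_P$ is the same datum as a morphism $f_P : P \rarr \mathbb{G}(r, n)$ of degree $\deg(\wedge^r S^*|_P)$'' holds only when $Q|_P$ is locally free. Quasi-stability forces local freeness of $Q$ at the two special points of $P$, but \emph{not} in the interior; when $\tau(Q|_P)\neq 0$ the classifying map (extended across the torsion points) sees only the saturation $\widetilde{S}\supset S|_P$ and has degree
\[
\deg f_P \;=\; \deg(\wedge^r S^*|_P)\;-\;\text{length}\,\tau(Q|_P),
\]
so a constant $f_P$ no longer forces $\deg(\wedge^r S^*|_P)=0$. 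Concretely, $S|_P=\oh_P(-1)\hookrightarrow \oh_P^2$ via $z\mapsto (s(z),0)$ with $s$ vanishing at one interior point has constant classifying map but positive degree. The fix is short: if the $\com^*$ on $P$ preserves $S|_P$ then it preserves $Q|_P$ and hence the support of $\tau(Q|_P)$; since that support lies in the interior of $P$ (by quasi-stability) where $\com^*$ acts freely, it must be empty. Then $S|_P$ is a genuine subbundle and your classifying-map argument goes through verbatim. Your final remark about lifts up to a character is harmless but unnecessary: the trivial equivariant structure on $\com^n\otimes\oh_P$ is canonical, and the question of whether $\phi_\lambda^*S=S$ involves no choice.
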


\subsection{First examples}
The simplest examples occur when $d=0$. Then, stability of
the quotient implies
the underlying pointed curve is stable. We see
$$\overline{Q}_{g,m}({\mathbb{G}}(r,n),0) = \overline{M}_{g,m} \times
{\mathbb{G}}(r,n)$$
where ${\mathbb{G}}(r,n)$ denotes the Grassmannian of $r$-planes
in $\com^n$.

A more interesting example is $\overline{Q}_{1,0}({\mathbb{G}}(1,n),1)$.
A direct analysis yields
$$\overline{Q}_{1,0}({\mathbb{G}}(1,n),1) = 
\overline{M}_{1,1} \times \proj^{n-1}.$$
Given a 1-pointed stable genus $1$ curve $(E,p)$ and
an element $\xi \in \proj^{n-1}$, the associated
stable quotient is 
$$0\rarr \oh_E(-p) \stackrel{\iota_\xi}{\rarr}
\com^n \otimes \oh_E \rarr Q \rarr 0$$
 where 
$\iota_\xi$ is the composition of the canonical inclusion
$$0\rarr \oh_E(-p) \rarr \oh_E$$
with the line in $\com^n$ determined by $\xi$.

The open locus ${Q}_{g,0}({\mathbb{G}}(r,n),d)\subset
\overline{Q}_{g,0}({\mathbb{G}}(r,n),d)$,
corresponding to nonsingular domains $C$,
is simply the universal Quot scheme over the moduli space
of nonsingular curves.

\section{Structures}\label{strrr}
\subsection{Maps}
Over the moduli space of stable quotients, there is a universal
curve
\begin{equation}\label{ggtt}
\pi: U \rarr \overline{Q}_{g,m}({\mathbb{G}}(r,n),d)
\end{equation}
with $m$ sections and a universal quotient
$$0 \rarr S_U \rarr \com^n \otimes \oh_U \stackrel{q_U}{\rarr} Q_U \rarr 0.$$
The subsheaf $S_U$ is locally free on $U$ because of the 
stability condition.

The moduli space $\overline{Q}_{g,m}({\mathbb{G}}(r,n),d)$ is equipped
with two basic types of maps.
If $2g-2+m >0$, then the stabilization of $(C,p_1,\ldots,p_m)$
determines a map
$$\nu:\overline{Q}_{g,m}({\mathbb{G}}(r,n),d) \rightarrow \overline{M}_{g,m}$$
by forgetting the quotient.
For each marking $p_i$, the quotient is locally free over $p_i$, hence it determines
an evaluation map
$$\text{ev}_i: 
\overline{Q}_{g,m}({\mathbb{G}}(r,n),d) \rightarrow {\mathbb{G}}(r,n).$$

The universal curve \eqref{ggtt} is {\em not} isomorphic to 
$\overline{Q}_{g,m+1}({\mathbb{G}}(r,n),d)$. In fact, there does {\em not}
exist a forgetful map of the form
$$\overline{Q}_{g,m+1}({\mathbb{G}}(r,n),d) \rarr \overline{Q}_{g,m}({\mathbb{G}}(r,n),d)$$
since there is no canonical way to contract the quotient sequence.

The general linear group $\mathbf{GL}_n(\com)$ acts on
$\overline{Q}_{g,m}({\mathbb{G}}(r,n),d)$ via 
the standard
action on $\com^n \otimes \oh_C$. The structures
$\pi$, $q_U$,
$\nu$ and the evaluations maps are all $\mathbf{GL}_n(\com)$-equivariant.

\subsection{Obstruction theory}
Even if $2g-2+m$ is not strictly positive, the moduli of stable
quotients maps to the Artin stack of pointed domain curves
$$\nu^A:
\overline{Q}_{g,m}({\mathbb{G}}(r,n),d) \rightarrow {\mathcal{M}}_{g,m}.$$
The moduli  of stable quotients with fixed underlying
curve 
$$(C,p_1,\ldots,p_m) \in {\mathcal{M}}_{g,m}$$
 is simply
an open set of the Quot scheme. 
The following result is obtained from the
standard deformation theory of the Quot scheme.

\begin{Theorem}\label{htr}
The deformation theory of the Quot scheme 
determines a 2-term obstruction theory on
$\overline{Q}_{g,m}({\mathbb{G}}(r,n),d)$ relative to
$\nu^A$
given by ${{RHom}}(S,Q)$.
\end{Theorem}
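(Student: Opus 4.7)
The plan is to construct $E^\bullet = (R\pi_* R\mathcal{H}om(S_U, Q_U))^\vee$, where $\pi: U \rarr \overline{Q}_{g,m}({\mathbb{G}}(r,n),d)$ is the universal curve and $S_U, Q_U$ are the universal subsheaf and quotient, and to verify that $E^\bullet$ defines a relative perfect obstruction theory for $\nu^A$ in the sense of Behrend-Fantechi. Three items must be checked: (i) $E^\bullet$ is 2-term (perfect of amplitude $[-1,0]$), (ii) there is a natural morphism $\phi : E^\bullet \rarr \LL_{\nu^A}$ to the relative cotangent complex, and (iii) $h^0(\phi)$ is an isomorphism and $h^{-1}(\phi)$ a surjection.

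Item (i) is immediate from the stability hypothesis. The quasi-stability condition forces $S_U$ to be locally free on $U$, so $R\mathcal{H}om(S_U, Q_U) = S_U^\vee \otimes Q_U$ is a coherent sheaf concentrated in degree zero. Since $\pi$ is a flat family of at worst nodal curves, $R\pi_*$ has cohomological amplitude $[0, 1]$, hence $R\pi_*(S_U^\vee \otimes Q_U)$ is perfect of tor-amplitude $[0, 1]$ and its derived dual $E^\bullet$ is perfect of amplitude $[-1, 0]$.

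For (ii), I would apply Illusie's Atiyah-class construction, as adapted to the Quot setting in \cite{CFK, MO}, to the universal exact sequence
$$0 \rarr S_U \rarr \com^n \otimes \oh_U \rarr Q_U \rarr 0$$
viewed as a family of quotients on the varying family of domain curves. The relative Atiyah class of $Q_U$ produces a morphism $S_U^\vee \otimes Q_U \rarr \pi^* \LL_{\nu^A}[1]$ on $U$; pushing down along $\pi$ and dualizing yields $\phi$.

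The main obstacle is the verification of (iii). I would do this pointwise. Over a geometric point $\xi = (C, p_1, \ldots, p_m, q)$ of $\overline{Q}_{g,m}({\mathbb{G}}(r,n),d)$ with image $(C, p_1, \ldots, p_m) \in \mathcal{M}_{g,m}$, the fiber of $\nu^A$ is an open subscheme of the classical Quot scheme on the fixed curve $C$; Grothendieck's deformation theory identifies its relative tangent and obstruction spaces at $\xi$ with $\Ext^0_C(S, Q)$ and $\Ext^1_C(S, Q)$, which match $h^0$ and $h^1$ of $(E^\bullet|_\xi)^\vee$ by cohomology and base change. One must then check that $\phi|_\xi$ realizes these identifications, i.e. that the Atiyah-class map specializes to the classical Kodaira-Spencer and obstruction maps of the Quot scheme. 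This is standard but delicate; it is made tractable here because one is hom-ing out of the locally free sheaf $S_U$, so the torsion in $Q_U$ (supported away from nodes and markings by stability) causes no trouble with base change.
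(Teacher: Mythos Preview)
Your proposal is correct and is precisely the standard argument the paper alludes to. The paper does not give a proof of this theorem at all: the surrounding text simply asserts that the result ``is obtained from the standard deformation theory of the Quot scheme'' and cites \cite{CFK,MO} for the analogous statement over a fixed nonsingular curve. Your sketch is a faithful elaboration of that standard argument --- construct $E^\bullet$ as the derived dual of $R\pi_*(S_U^\vee\otimes Q_U)$, use local freeness of $S_U$ (forced by quasi-stability) together with the relative dimension of $\pi$ to get the $[-1,0]$ amplitude, build the map to $\LL_{\nu^A}$ via the Atiyah class, and verify the Behrend--Fantechi conditions fiberwise by reducing to classical Quot deformation theory. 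There is nothing to compare; you have supplied what the paper left implicit.
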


An absolute 2-term obstruction theory on
$\overline{Q}_{g,m}({\mathbb{G}}(r,n),d)$ is
obtained from Theorem \ref{htr} and the smoothness
of $\mathcal{M}_{g,m}$, see \cite{BF,GP}. The
 analogue of Theorem \ref{htr} for the Quot scheme of a {\it fixed} nonsingular
 curve was observed in \cite {CFK,MO}.

The $\mathbf{GL}_n(\com)$-action lifts to the
obstruction theory,
and the resulting virtual class is
defined in $\mathbf{GL}_n(\com)$-equivariant cycle theory,
$$[\overline{Q}_{g,m}({\mathbb{G}}(r,n),d)]^{vir} \in A_*^{\mathbf{GL}_n(\com)}
(\overline{Q}_{g,m}({\mathbb{G}}(r,n),d), \mathbb{Q}).$$
A system of 
${\mathbf{GL}_n(\com)}$-equivariant
descendent invariants is defined by the
brackets
$$\langle \tau_{a_1}(\gamma_1) \ldots \tau_{a_m}(\gamma_m) \rangle_{g,d}
= \int_{[\overline{Q}_{g,m}({\mathbb{G}}(r,n),d)]^{vir}}
\prod_{i=1}^m \psi_i^{a_i} \cup \text{ev}_i^*(\gamma_i)
$$
where $\gamma_i \in A^*_{\mathbf{GL}_n(\com)}      
({\mathbb{G}}(r,n),\mathbb{Q})$.
The classes $\psi_i$ are obtained from the cotangent
lines on the domain (or, equivalently, pulled-back from the
Artin stack by $\nu^A$).

\subsection{Nonsingularity}
Let $E$ be a nonsingular curve of genus 1, and let
$$f: E \rarr \mathbb{G}(1,n)$$
be a morphism of degree $d>0$.
The pull-back of the tautological sequence on 
$\mathbb{G}(1,n)$ determines a stable quotient on $E$.
The moduli space of maps
is an open{\footnote{If $d>1$, the subset is nonempty.
If $d=1$, the subset is empty.}} subset
\begin{equation}\label{xcfe}
M_{1,0}(\mathbb{G}(1,n),d) \subset 
\overline{Q}_{1,0}(\mathbb{G}(1,n),d)
\end{equation}
for $d>0$.

Let $(C,q)$ be a stable quotient parameterized by
$\overline{Q}_{1,0}(\mathbb{G}(1,n),d)$.
By stability, $C$ is either a nonsingular genus 1 curve or
a cycle of rational curves.
The associated sheaf $S$ is a line bundle of degree 
$-d<0$. The vanishing $$\text{Ext}^1(S,Q)=0$$
holds since 
there are no nonspecial
line bundles of positive degree on such curves.

\begin{Proposition} \label{nn245} 
$\overline{Q}_{1,0}({\mathbb{G}}(1,n),d)$
is a nonsingular irreducible Deligne-Mumford
stack  of dimension $nd$ for $d>0$.
\end{Proposition}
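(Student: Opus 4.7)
The plan is to read smoothness and the dimension count directly off the relative obstruction theory of Theorem~\ref{htr}, and then to obtain irreducibility by combining smoothness of $\nu^A$ with an explicit description of the Quot scheme on a smooth elliptic curve.

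The excerpt shows $\text{Ext}^1(S,Q) = 0$ for every stable quotient in $\overline{Q}_{1,0}(\mathbb{G}(1,n),d)$ with $d>0$: from the defining sequence $0\to S\to \mathcal{O}^n_C\to Q\to 0$ one reads $\text{Ext}^1(S,Q)$ as a quotient of $\text{Ext}^1(S,\mathcal{O}^n_C) = H^1(S^*)^n$, which vanishes since $S^*$ has positive multidegree on every component of $C$ (a smooth elliptic curve or a cycle of rational curves). By Theorem~\ref{htr}, the relative obstruction theory of $\nu^A:\overline{Q}_{1,0}(\mathbb{G}(1,n),d)\to\mathcal{M}_{1,0}$ is thus unobstructed, so $\nu^A$ is smooth. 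Since $\mathcal{M}_{1,0}$ is a smooth Artin stack of dimension $3g-3=0$, the moduli space is a nonsingular Deligne--Mumford stack. The relative dimension at $(C,q)$ is $\dim\text{Hom}(S,Q) = \chi(S^*\otimes Q) = nd$ by Riemann--Roch on the arithmetic-genus-$1$ curve $C$ (rank $n-1$, degree $nd$), whence $\dim \overline{Q}_{1,0}(\mathbb{G}(1,n),d)=nd$.

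For irreducibility, smoothness of the stack implies its irreducible components are disjoint, so it suffices to exhibit a dense irreducible open substack. Let $\mathcal{Q}^{sm}\subset\overline{Q}_{1,0}(\mathbb{G}(1,n),d)$ denote the open locus with smooth domain. Because $\nu^A$ is smooth it is open, and since $\mathcal{M}^{sm}_{1,0}\subset \mathcal{M}_{1,0}$ is open and dense, $\mathcal{Q}^{sm}=(\nu^A)^{-1}(\mathcal{M}^{sm}_{1,0})$ is open and dense in $\overline{Q}_{1,0}(\mathbb{G}(1,n),d)$. To see $\mathcal{Q}^{sm}$ is irreducible, I would present the Quot scheme on a smooth elliptic curve $E$ as a projective bundle: the assignment $[S\subset \mathcal{O}^n_E]\mapsto [S^*]\in \text{Pic}^d(E)\cong E$ has fiber over $[L]$ equal to $\mathbb{P}(\text{Hom}(L^*,\mathcal{O}^n_E))=\mathbb{P}(H^0(L)^n)\cong\mathbb{P}^{nd-1}$, using $h^0(L)=d$ from Riemann--Roch on a genus-$1$ curve for $d\geq 1$. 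Hence each fiber of $\nu^A|_{\mathcal{Q}^{sm}}$ is irreducible of dimension $nd$; combined with irreducibility of the base $\mathcal{M}^{sm}_{1,0}$ and smoothness of $\nu^A|_{\mathcal{Q}^{sm}}$ (which forces any two irreducible components of $\mathcal{Q}^{sm}$ to map to disjoint open substacks of the base), this gives irreducibility of $\mathcal{Q}^{sm}$. Density then promotes irreducibility to all of $\overline{Q}_{1,0}(\mathbb{G}(1,n),d)$.

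The main obstacle is the projective-bundle description of the Quot scheme over $\text{Pic}^d(E)$, with the Riemann--Roch input $h^0(L)=d$; with that in hand the smoothness of $\nu^A$ and the standard density argument for open morphisms do the rest.
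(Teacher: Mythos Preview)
Your proof is correct and follows essentially the same route as the paper: nonsingularity from the vanishing $\text{Ext}^1(S,Q)=0$, dimension from Riemann--Roch, and irreducibility from the projective-bundle description of the Quot scheme over a smooth elliptic curve. The paper compresses the last step into one line (``$Q_{1,0}(\mathbb{G}(1,n),d)$ is an open set of a projective bundle over the moduli of elliptic curves''), whereas you spell out the density of the smooth-domain locus via openness of $\nu^A$ and the fiberwise $\mathbb{P}^{nd-1}$-bundle over $\text{Pic}^d(E)$.
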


\begin{proof}
Nonsingularity has already been established.
The dimension is obtained from a Riemann-Roch
calculation{\footnote{The calculation is
done in general in Lemma \ref{kkop} below.}} of $\chi(S,Q)$.
Irreducibility is clear since 
${Q}_{1,0}({\mathbb{G}}(1,n),d)$ is
an open set of a projective bundle over the
moduli of elliptic curves.
\end{proof}

For simplicity, we will denote the moduli space
by 
$\overline{Q}_{1,0}({\mathbb{P}}^{n-1},d)$. 
Stable quotients
 provide an efficient 
compactification \eqref{xcfe} of $M_{1,0}(\proj^{n-1},d)$. Instead of desingularizing the moduli of 
maps by blowing-up  the closure 
of 
$$M_{1,0}(\mathbb{P}^{n-1},d)\subset \overline{M}_{1,0}(\proj^{n-1},d)$$
 in the
moduli of stable maps \cite{HL,VZ},
the stable quotient space achieves a simple modular
desingularization by {blowing-down}.

For large degree $d$, all line bundles on nonsingular
curves are nonspecial. As a result,
the following nonsingularity result holds.

\begin{Proposition}
For $g\geq 2$ and  $d\geq 2g-1$, the forgetful morphism 
$$\nu: Q_{g,0}(\mathbb P^{n-1}, d) \rarr M_g$$
is smooth of expected relative dimension.
\end{Proposition}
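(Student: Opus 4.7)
The plan is to apply Theorem \ref{htr} fiberwise over $M_g$, so that smoothness of $\nu$ reduces to the vanishing of the relative obstruction sheaf. Since the relative obstruction theory is $R\mathrm{Hom}(S,Q)$, and $\mathcal{M}_{g,0}$ is smooth over the nonsingular locus $M_g$, the morphism $\nu$ will be smooth at a point $[(C,q)]$ as soon as $\mathrm{Ext}^1(S,Q)=0$, with relative dimension equal to $\dim \mathrm{Hom}(S,Q)=\chi(S,Q)$ (computed in Lemma \ref{kkop} below, referenced in the previous proposition).

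The main step is thus the cohomological vanishing. Fix a nonsingular $C$ of genus $g\geq 2$ and a stable quotient
$$0 \to S \to \com^n \otimes \oh_C \to Q \to 0$$
with $\deg S = -d$. Since $S$ is a line bundle (in particular locally free), tensoring by $S^*$ preserves exactness and gives
$$0 \to \oh_C \to \com^n \otimes S^* \to S^*\otimes Q \to 0.$$
Because $S$ is locally free, $\mathrm{Ext}^1(S,Q)=H^1(C, S^*\otimes Q)$, and the long exact cohomology sequence, together with $H^2=0$ on a curve, yields a surjection
$$\com^n \otimes H^1(C,S^*) \twoheadrightarrow H^1(C,S^*\otimes Q)=\mathrm{Ext}^1(S,Q).$$
Thus it suffices to show $H^1(C,S^*)=0$. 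By Serre duality $H^1(C,S^*)\cong H^0(C,K_C\otimes S)^*$, and $\deg(K_C\otimes S)=2g-2-d$. The hypothesis $d\geq 2g-1$ forces this degree to be negative, hence $H^0(K_C\otimes S)=0$, which gives $\mathrm{Ext}^1(S,Q)=0$ as required.

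Having established the vanishing, a Riemann--Roch computation finishes the expected-dimension assertion: from the displayed short exact sequence,
$$\chi(S,Q)=\chi(S^*\otimes Q)=n\,\chi(S^*) - \chi(\oh_C) = n(d+1-g) - (1-g) = nd - (n-1)(g-1),$$
which is the expected relative dimension of the Quot scheme over $M_g$.

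There is no real obstacle here beyond packaging Theorem \ref{htr} correctly; the only subtle point worth flagging is that although $Q$ may have torsion, this is harmless because $S^*$ is locally free, so tensoring the defining sequence by $S^*$ remains exact and the cohomology bound propagates to $S^*\otimes Q$. The sharpness of the numerical hypothesis $d\geq 2g-1$ is exactly the nonspeciality threshold for line bundles on a genus-$g$ curve, matching the brief heuristic given just before the proposition.
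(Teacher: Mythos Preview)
Your proof is correct and is essentially a careful expansion of the paper's one-line justification preceding the proposition: nonspeciality of line bundles of degree $d\geq 2g-1$ forces $H^1(C,S^*)=0$, and the short exact sequence then gives $\mathrm{Ext}^1(S,Q)=0$. The paper provides no further detail, so your argument is exactly what was intended.
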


The result does {\em not} hold over the boundary or even over the 
interior if markings are present.

\section{Stable quotients for $\mathbb{G}(n,n)$}\label{pttt}
\subsection{$n=1$}
\label{xcx}
Consider $\overline{Q}_{g,m}(\mathbb{G}(1,1),d)$ for $d>0$. The moduli 
space parameterizes
stable quotients
$$0 \rarr S \rarr \oh_C \rarr Q \rarr 0.$$
Hence, $S$ is an ideal sheaf of $C$. 

Let $\overline{M}_{g,m|d}$ be the moduli space of genus $g$
curves with markings
$$ \{p_1\ldots,p_m\} \ \cup \
\{\widehat{p}_{1},\ldots,\widehat{p}_{d}\}
\in C^{ns}\subset C$$
satisfying the conditions
\begin{enumerate}
\item[(i)]
the points $p_i$ are distinct,
\item[(ii)]
the points $\widehat{p}_{j}$ are distinct from the points $p_i$,
\end{enumerate}
with stability given by the ampleness
of $$\omega_C(\sum_{i=1}^m {p}_{i} +\epsilon \sum_{j=1}^d  \widehat{p}_j)$$
for every strictly positive $\epsilon \in {\mathbb{Q}}$.
The conditions allow the points $\widehat{p}_{j}$ and $\widehat{p}_{j'}$ to coincide.

The moduli space  $\overline{M}_{g,m|d}$ is a nonsingular, irreducible,
Deligne-Mumford stack.{\footnote{In fact, $\overline{M}_{g,m|d}$
is a special case of the moduli of pointed curves with weights
studied by \cite{Has,LM}.}}
Given an element
$$[C,{p}_1,\ldots, {p}_m, \widehat{p}_1,
\ldots,\widehat{p}_d] \in \overline{M}_{g,m|d}\ , $$
there is a canonically associated stable quotient
\begin{equation}\label{jwq}
0 \rarr \oh_C(-\sum_{j=1}^d \widehat{p}_j) \rarr \oh_C \rarr Q \rarr 0.
\end{equation}
We obtain a morphism
$$\phi: \overline{M}_{g,m|d} \rarr \overline{Q}_{g,m}(\mathbb{G}(1,1),d).$$
The following result is proven by matching the
stability conditions.

\begin{Proposition}\label{xx23}
The map $\phi$ induces an isomorphism
$$\overline{M}_{g,m|d}/\mathbb{S}_d 
\stackrel{\sim}{\rarr} \overline{Q}_{g,m}(\mathbb{G}(1,1),d)$$
where the symmetric group $\mathbb{S}_d$ acts by permuting the markings
$\widehat{p}_j$.
\end{Proposition}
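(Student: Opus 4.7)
The plan is to construct an inverse to the induced map $\overline{\phi}: \overline{M}_{g,m|d}/\mathbb{S}_d \to \overline{Q}_{g,m}(\mathbb{G}(1,1),d)$. First observe that the stable quotient \eqref{jwq} depends on the auxiliary points only through the effective divisor $\sum_j \widehat{p}_j$, so $\phi$ is manifestly $\mathbb{S}_d$-invariant and descends to $\overline{\phi}$.

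For the inverse on points, take a stable quotient $0 \to S \to \oh_C \to Q \to 0$ with $\rk(S)=1$ and $\deg(S)=-d$. Quasi-stability forces $S$ to be locally free of rank $1$, and the injection $S \hookrightarrow \oh_C$ is an isomorphism away from the torsion support of $Q$, which avoids the nodes and the markings $p_i$. Hence $S = \oh_C(-D)$ for a unique effective Cartier divisor $D$ of degree $d$ supported in $C^{ns} \setminus \{p_1,\ldots,p_m\}$, and writing $D = \widehat{p}_1 + \cdots + \widehat{p}_d$ (with repetitions allowed, ordering arbitrary) produces the desired element of $\overline{M}_{g,m|d}/\mathbb{S}_d$. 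The two stability conditions then coincide termwise: with $r=1$ we have $\wedge^r S^* = \oh_C(\widehat{p}_1 + \cdots + \widehat{p}_d)$, so the ampleness of $\omega_C(\sum_i p_i) \otimes (\wedge^r S^*)^{\otimes \epsilon}$ for all $\epsilon>0$ is literally the ampleness of $\omega_C(\sum_i p_i + \epsilon \sum_j \widehat{p}_j)$ defining stability on $\overline{M}_{g,m|d}$.

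To upgrade this bijection to an isomorphism of stacks, I would carry out the inverse construction in families. Given a family of stable quotients over a base $T$, with universal curve $\mathcal{C} \to T$, universal subsheaf $\mathcal{S} \hookrightarrow \oh_{\mathcal{C}}$, and universal quotient $\mathcal{Q}$ flat over $T$, dualize to obtain a section of $\mathcal{S}^{-1}$. Its vanishing locus $\mathcal{D} \subset \mathcal{C}$ is a relative effective Cartier divisor of degree $d$, contained in the smooth locus $\mathcal{C}^{ns}$ and disjoint from the markings (because $\mathcal{Q}$ is locally free there). Interpreting $\overline{M}_{g,m|d}/\mathbb{S}_d$ as the moduli stack of pointed quasi-stable curves equipped with a degree-$d$ effective Cartier divisor in $\mathcal{C}^{ns} \setminus \{p_i\}$ (satisfying the ampleness condition), the pair $(\mathcal{C}/T,\mathcal{D})$ defines a $T$-point of $\overline{M}_{g,m|d}/\mathbb{S}_d$ functorially in $T$.

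The two constructions are inverse to each other essentially by definition: $\phi$ recovers $\oh_C(-D)$ from $D$, and the inverse recovers $D$ as the zero locus of the dual section. I expect the only delicate point to be the identification of $\overline{M}_{g,m|d}/\mathbb{S}_d$ with the moduli stack of quasi-stable pointed curves with a degree-$d$ Cartier divisor avoiding nodes and markings; once this identification is in hand, the inverse is the tautological Abel-Jacobi-type morphism sending a relative divisor to itself, and the isomorphism of Deligne-Mumford stacks follows.
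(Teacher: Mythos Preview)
Your proposal is correct and follows the same approach the paper indicates: the paper simply remarks that the result ``is proven by matching the stability conditions,'' and your argument carries this out in detail, observing that a rank~1 subsheaf $S\hookrightarrow\oh_C$ is the ideal sheaf of an effective Cartier divisor supported in $C^{ns}\setminus\{p_i\}$ and that the two ampleness conditions are literally identical. The family version via the relative Cartier divisor is the natural way to promote the pointwise bijection to an isomorphism of stacks.
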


The first example to consider is $\overline{Q}_{0,2}(\mathbb{G}(1,1),d)$
for $d> 0$. The space has a rather simple geometry.
For example, the Poincar\'e polynomial
$$p_d = \sum_{k=0}^{2d-2} B_k t^k$$
where $B_k$ is the $k^{th}$ Betti number of
$\overline{Q}_{0,2}(\mathbb{G}(1,1),d)$, is
easily obtained.

\begin{Lemma} 
$p_d = (1+t^2)^{d-1}$ for $d>0$.
\end{Lemma}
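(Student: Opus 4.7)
The plan is to use Proposition~2 to identify $\overline{Q}_{0,2}(\mathbb{G}(1,1), d)$ with the quotient $\overline{M}_{0,2|d}/\mathbb{S}_d$, and then to compute the Poincar\'e polynomial via the natural stratification of this quotient by topological type of the underlying curve.

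I would index the strata by compositions $(d_1, \ldots, d_k)$ of $d$ into $k \geq 1$ positive parts. The corresponding stratum parameterizes curves which are chains of $k$ rational components --- with $p_1$ on the first and $p_2$ on the last --- carrying an effective hat divisor of degree $d_i$ on the $i$-th component. The stability of the quotient forces $d_i \geq 1$ on every component (otherwise a component consisting of two nodes only, or one marking plus one node only, would fail the ampleness condition \eqref{aam}). The number of such compositions with $k$ parts is $\binom{d-1}{k-1}$. On a single component, the moduli of a degree-$d_i$ effective divisor on $\mathbb{P}^1$ avoiding two prescribed endpoints, modulo the $\mathbb{C}^*$ automorphism fixing those endpoints, is parameterized by a monic degree-$d_i$ polynomial with nonvanishing constant term up to the scaling $z \mapsto \lambda z$. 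Normalizing the constant term to $1$ gives, coarsely, the quotient $\mathbb{A}^{d_i-1}/\mu_{d_i}$, whose rational cohomology is that of $\mathbb{A}^{d_i-1}$. Taking the product over the $k$ components, every stratum has rational compactly-supported cohomology equal to that of $\mathbb{A}^{d-k}$, concentrated in top degree $2(d-k)$.

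Since $\overline{Q}_{0,2}(\mathbb{G}(1,1), d)$ is smooth and proper and the closure of each stratum is a union of strata of strictly smaller dimension (deeper splittings of the chain), the weight spectral sequence associated to this stratification degenerates, and summing the contributions $t^{2 \dim S}$ over the strata gives
$$
p_d \;=\; \sum_{k=1}^{d} \binom{d-1}{k-1}\, t^{2(d-k)} \;=\; \sum_{j=0}^{d-1} \binom{d-1}{j}\, t^{2j} \;=\; (1+t^2)^{d-1}.
$$
The main technical obstacle is the presence of cyclic stabilizers $\mu_{d_i}$, which means the stratification is not an affine paving in the strictest (integral) sense; these stabilizers are however invisible to rational cohomology, and the weight argument carries through without modification.
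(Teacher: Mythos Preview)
Your proposal is correct and follows essentially the same route as the paper: both stratify by the composition $(d_1,\ldots,d_k)$ recording the degree distribution along the chain, identify each factor as $\mathrm{Sym}^{d_i}(\com^*)/\com^*$, and sum the resulting contributions $t^{2(d_i-1)}$. The only cosmetic differences are that the paper computes the virtual Poincar\'e polynomial of $\mathrm{Sym}^{k}(\com^*)/\com^*$ via the filtration $p(\mathrm{Sym}^k(\com))=\sum_i p(\mathrm{Sym}^i(\com^*))$ rather than by your explicit normalization to $\mathbb{A}^{d_i-1}/\mu_{d_i}$, and the paper leaves the final binomial sum as ``elementary counting'' while you write it out.
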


\begin{proof} 
Let $(C,p_1,p_2,q)$ be an element of
$\overline{Q}_{0,2}(\mathbb{G}(1,1),d)$.
By the stability condition, $(C,p_1,p_2)$
must be a simple chain of rational curves with the
markings $p_1$ and $p_2$ on opposite extremal components.
We may stratify $\overline{Q}_{0,2}(\mathbb{G}(1,1),d)$
by the number $n$ of components of $C$ and the
distribution of the degree
 $d$ on these components. 
The associated
quasi-projective strata
$$S_{(d_1,\ldots,d_n)} \subset
\overline{Q}_{0,2}(\mathbb{G}(1,1),d)$$
are indexed by vectors
$$(d_1,\ldots, d_n), \ \ d_i >0, \ \ \sum_{i=1}^n d_i = d.$$
Moreover, each stratum is a product,
$$S_{(d_1,\ldots,d_n)} \stackrel{\sim}{=}
\prod_{i=1}^n (\text{Sym}^{d_i}(\com^*)/{\com^*}).$$

To calculate $p_d$, we must compute the virtual
Poincar\'e polynomial of the quotient space
$\text{Sym}^{k}(\com^*)/{\com^*}$
for all $k>0$.
We start with the virtual Poincar\'e polynomial of $\text{Sym}^k(\com)$, 
$$p(\text{Sym}^k(\com)) = p(\com^k) = t^{2k}.$$
Filtering by the order at $0\in \com$, we find 
$$p(\text{Sym}^k(\com)) = 
\sum_{i=0}^k p(\text{Sym}^i(\com^*)).$$
We conclude

$$p(\text{Sym}^k(\com^*)) = t^{2k}-t^{2k-2}$$
for $k>0$.
The quotient by $\com^*$ can be handled simply
by dividing by $t^2-1$, see \cite{GetPan}.
Hence,
$$p(\text{Sym}^{k}(\com^*)/{\com^*})= t^{2k-2}.$$
The Lemma then follows by elementary counting.
\end{proof}

\subsection{Classes}
There are several basic classes on 
$\overline{M}_{g,m|d}$.
As in the study of the standard moduli space of stable curves,
there are strata classes 
$$\mathcal{S}\in A^*(\overline{M}_{g,m|d},\mathbb{Q})$$
given by fixing the topological type of
a degeneration. New diagonal classes are defined for 
every subset $J\subset \{1,\ldots, d\}$ of size at least 2,
$$D_J \in A^{|J|-1} (\overline{M}_{g,m|d},\mathbb{Q}),$$
corresponding to the locus where the
points $\{\widehat{p}_j\}_{j\in J}$ are
coincident. In fact, the subvariety
$$D_J \subset \overline{M}_{g,m|d}$$
is isomorphic to $\overline{M}_{g,m|(d-|J|+1)}$.
The cotangent bundles
$$\mathbb{L}_i \rarr \overline{M}_{g,m|d}, \ \ \widehat{\mathbb{L}}_j  \rarr \overline{M}_{g,m|d}$$
corresponding to the two types of markings have respective
Chern classes
$$\psi_i=c_1(\mathbb{L}_i), \ 
\widehat{\psi}_j=c_1(\widehat{\mathbb{L}}_j) 
\in A^1(\overline{M}_{g,m|d}, \mathbb{Q}).$$
The Hodge bundle with fiber $H^0(C,\omega_C)$ over the curve $[C]\in \overline{M}_{g,m|d}$,
$$\mathbb{E} \rarr \overline{M}_{g,m|d},$$
has Chern classes
$$\lambda_i = c_i(\mathbb{E})\in A^i(\overline{M}_{g,m|d},\mathbb{Q}).$$

\subsection{Cotangent calculus}
Assume $2g-2+m\geq 0$. Canonical contraction defines a fundamental birational morphism
$$\tau: \overline{M}_{g,m+d} \rarr \overline{M}_{g,m|d}.$$
By the stability conditions, the cotangent lines at the points $p_i$ are
unchanged by $\tau$,
$$\tau^*(\psi_i) =\psi_i, \ \ \  1\leq i \leq m.$$
However, contraction affects the cotangent line classes
at the other points,
\begin{equation}\label{ppww}
\psi_{m+j}=\tau^*(\widehat{\psi}_j) + \Delta_{m+j}.
\end{equation}
Here, $\Delta_{m+j}$ is the sum 
$$\Delta_{m+j} = \sum_{j'\neq j} \Delta_{j, j'}$$
where $\Delta_{j, j'}$  is the boundary divisor of $\overline{M}_{g,m+d}$ 
parameterizing curves  
$$C= C'\cup C'', \ \ \ g(C')=0, \ \ g(C'') = g$$
with a single separating node  
and the markings labeled $m+j$ and $m+j'$ distributed to $C'$.

Let $\prod_{j=1}^d \widehat{\psi}_{j}^{\hpo y_j}$ be a monomial 
class on $\overline{M}_{g,m|d}$.
Since $\tau$ is birational,
\begin{equation}\label{drt7}
\tau_*\tau^*(\prod_{j=1}^d \widehat{\psi}_{j}^{\hpo y_j}) =\prod_{j=1}^d 
\widehat{\psi}_{j}^{\hpo y_j}.
\end{equation}
After using  relations \eqref{ppww} and \eqref{drt7}, we see for example
$$\tau_*(\psi_{m+j}) = 
\widehat{\psi}_j + \sum_{j'\neq j} D_{j,j'}\ .$$

\noindent The method proves the following result.

\begin{Lemma}\label{nbv}
There exists a universal formula 
$$\tau_*\left( \prod_{i=1}^m {\psi}_{i}^{x_i} \prod_{j=1}^d {\psi}_{m+j}^{y_j}\right)
= 
\prod_{i=1}^m {\psi}_{i}^{x_i}\left(  \prod_{j=1}^d 
\widehat{\psi}_{j}^{\hpo y_j}+ \ldots \right)$$
where the dots are polynomials in the $\widehat{\psi}_j$ and 
$D_J$ classes which are independent of $g$ and $m$.
\end{Lemma}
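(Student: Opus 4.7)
First, since $\tau^*\psi_i=\psi_i$ for $1\le i\le m$, the projection formula reduces the problem to showing that $\tau_*\bigl(\prod_{j=1}^d\psi_{m+j}^{y_j}\bigr)$ is a universal polynomial in the classes $\widehat\psi_j$ and $D_J$ with leading term $\prod_j\widehat\psi_j^{y_j}$.

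Next I would exploit the vanishing $\psi_{m+j}\cdot\Delta_{j,j'}=0$, which holds because on $\Delta_{j,j'}$ the marking $m+j$ sits on a rigid $\mathbb{P}^1$ (three special points: $m+j$, $m+j'$, and the node), so the cotangent line at $m+j$ is trivial there. Combined with \eqref{ppww}, this yields $\psi_{m+j}\cdot\Delta_{m+j}=0$ and inductively, for $y_j\ge 1$,
$$\psi_{m+j}^{y_j}=\tau^*\widehat\psi_j^{\,y_j}+\tau^*\widehat\psi_j^{\,y_j-1}\cdot\Delta_{m+j}.$$
Expanding the product over $j$, each summand has the form $\tau^*\bigl(\prod_j\widehat\psi_j^{a_j}\bigr)\cdot\prod_{j\in S}\Delta_{m+j}$ for some subset $S\subseteq\{1,\dots,d\}$. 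By the projection formula and \eqref{drt7}, its $\tau$-pushforward equals $\prod_j\widehat\psi_j^{a_j}\cdot\tau_*\bigl(\prod_{j\in S}\Delta_{m+j}\bigr)$, with the term $S=\emptyset$ yielding the leading contribution $\prod_j\widehat\psi_j^{y_j}$.

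The remaining work is to evaluate $\tau_*\bigl(\prod_{j\in S}\Delta_{m+j}\bigr)$. Expanding $\Delta_{m+j}=\sum_{j'\ne j}\Delta_{j,j'}$ and multiplying, two facts make the computation universal. First, whenever two pairs $\{j_1,j_1'\}\ne\{j_2,j_2'\}$ share a single index, the divisors $\Delta_{j_1,j_1'}$ and $\Delta_{j_2,j_2'}$ have empty set-theoretic intersection (one marking cannot sit on two distinct bubbles), so the intersection product vanishes. Second, the self-intersection is controlled by the normal-bundle identity $\Delta_{j,j'}\bigl|_{\Delta_{j,j'}}=-\psi_-$, with $\psi_-$ the cotangent class at the node on the $C''$-side (the $C'$-side contribution vanishes by rigidity of the bubble), so $\Delta_{j,j'}^k=(-\psi_-)^{k-1}\cdot\Delta_{j,j'}$. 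The surviving terms are thus products of such self-intersection powers over pairs with disjoint index sets, supported on transverse multi-boundary strata that $\tau$ maps birationally onto the multi-diagonals $D_{J_1}\cap\cdots\cap D_{J_r}$; under pushforward $\psi_-$ descends to $\widehat\psi_j$, and the result is a polynomial in $\widehat\psi_j$ and $D_J$.

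The main obstacle is organizing the combinatorial bookkeeping for the allowed pairings and iterated self-intersections. However, since every local contribution arises from rigid genus-zero bubbles whose moduli is independent of the geometry of the remaining component $C''$, all coefficients in the final polynomial depend only on the combinatorial data of the $y_j$ and the chosen pairings, and are therefore independent of $g$ and $m$, as required.
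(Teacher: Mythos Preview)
Your overall strategy---expand each $\psi_{m+j}$ via the comparison relation, apply the projection formula and \eqref{drt7}, and observe that the boundary contributions are governed purely by the combinatorics of the markings $\widehat p_j$---matches the paper's sketch. However, there is a genuine gap in your execution.

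The issue is that the relation \eqref{ppww}, read with $\Delta_{j,j'}$ the irreducible divisor whose rational tail carries \emph{exactly} the two markings $m+j,m+j'$, is incomplete for $d\ge 3$. The correct comparison is
\[
\psi_{m+j} \;=\; \tau^*\widehat\psi_j \;+\; \sum_{\substack{S\subseteq\{1,\ldots,d\}\\ j\in S,\ |S|\ge 2}} \Delta_S,
\]
where $\Delta_S$ is the divisor whose tail carries exactly $\{m+s:s\in S\}$ (factor $\tau$ through the forgetful map $\overline M_{g,m+d}\to\overline M_{g,m+1}$ to see this). For $|S|\ge 3$ the tail has moduli, so $\psi_{m+j}|_{\Delta_S}\neq 0$, and your key vanishing $\psi_{m+j}\cdot\Delta_{m+j}=0$ fails. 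Consequently the identity
$\psi_{m+j}^{y}=\tau^*\widehat\psi_j^{\,y}+\tau^*\widehat\psi_j^{\,y-1}\Delta_{m+j}$
is false in general. Concretely, for $g=0$, $m=3$, $d=3$ your formula gives
$\tau_*(\psi_{m+1}^3)=\widehat\psi_1^{\,3}+\widehat\psi_1^{\,2}(D_{1,2}+D_{1,3})$,
each summand of which is pulled back from a space of dimension at most $2$ and hence integrates to $0$ over the $3$-dimensional $\overline M_{0,3|3}$; but $\int_{\overline M_{0,6}}\psi_4^3=1$. The missing contribution comes from $\Delta_{\{1,2,3\}}$.

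The repair keeps your architecture but drops the shortcut: expand $\prod_j\bigl(\tau^*\widehat\psi_j+\sum_{S\ni j}\Delta_S\bigr)^{y_j}$ in full and push forward each monomial. Products of the $\Delta_S$ are supported on boundary strata that split as products of pointed rational moduli (the bubbles) with a single factor carrying the genus and the $p$-markings; $\tau$ contracts only the bubble factors, so the pushforward integrates over them and lands on the multi-diagonals $D_{J_1}\cdots D_{J_r}$ with $\widehat\psi$-coefficients. Those bubble integrals depend only on the sets $S$ involved, not on $g$ or $m$---exactly the universality you argue in your final paragraph. So the plan is right; only the simplification in the middle must be abandoned.
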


\subsection{Canonical forms} \label{can77}
Let $J,J' \subset \{ 1,\ldots, d\}$.
The cotangent line classes
\begin{equation}\label{k23}
\widehat{\psi}_j | D_J = \widehat{\psi}_J
\end{equation}
are all equal for $j\in D_J$.
If $J$ and $J'$ have nontrivial intersection, 
we obtain
\begin{equation}\label{p2h}
D_J \cdot D_{J'} =    (-\widehat{\psi}_{J\cup J'})^{|J\cap J'| -1} D_{J\cup J'}\ .
\end{equation}
by examining normal bundles.

If $M(\widehat{\psi}_j, D_J)$ is any monomial in the 
cotangent line and diagonal classes, we can write $M$ in a canonical
form in two steps:
\begin{enumerate}
\item[(i)] multiply the diagonal classes  using \eqref{p2h}
  until the result is a product of cotangent line classes
 with $D_{J_1} D_{J_2}\cdots D_{J_l}$ where all the subsets
$J_i$ are disjoint,
\item[(ii)] collect the equal cotangent line classes using \eqref{k23}.
\end{enumerate}
Let $M^{C}$ denote the resulting
canonical form.

By extending the  operation linearly, we can write any
polynomial $P(\widehat{\psi}_j, D_J)$ in canonical form $P^C$.
In particular, the universal formulas of Lemma 
\ref{nbv} can be taken to be in canonical form.

\subsection{Example}
The cotangent class intersections on $\overline{M}_{0,2|d}$,
\begin{equation}\label{heww}
\int_{\overline{M}_{0,2|d}} \psi_1^{x_1} \psi_2^{x_2} 
\widehat{\psi}_1^{\hpo y_1}
\cdots \widehat{\psi}_d^{\hpo y_d},
\end{equation}
for $d>0$ are straightforward to calculate.
Since the dimension of $\overline{M}_{0,2|d}$ is $d-1$,
at least one of the $y_j$ must vanish.
After permuting the indices, we may take $y_d=0$.
By studying the geometry of the map
$$\overline{M}_{0,2|d} \rarr \overline{M}_{0,2|d-1}$$
forgetting $\widehat{p}_d$ in case $d>1$,
we deduce
\begin{multline*}
\int_{\overline{M}_{0,2|d}} \psi_1^{x_1} \psi_2^{x_2} \widehat{\psi}_1^{\hpo y_1}
\cdots \widehat{\psi}_{d-1}^{\hpo y_{d-1}} = \\
\int_{\overline{M}_{0,2|d-1}} \psi_1^{x_1-1} \psi_2^{x_2} \widehat{\psi}_1^{\hpo y_1}
\cdots \widehat{\psi}_{d-1}^{\hpo y_{d-1}}
+\int_{\overline{M}_{0,2|d-1}} \psi_1^{x_1} \psi_2^{x_2-1} \widehat{\psi}_1^{\hpo y_1}
\cdots \widehat{\psi}_{d-1}^{\hpo y_{d-1}}.
\end{multline*}
Solving the recurrence, we conclude \eqref{heww}
vanishes unless all $y_j=0$ and
$$\int_{\overline{M}_{0,2|d}} \psi_1^{x_1} \psi_2^{x_2} =
\binom{d-1}{x_1,x_2}.$$

\subsection{Tautological complexes} \label{txtq}
Consider the universal curve
$$\pi: U \rarr \overline{M}_{g,m|d}$$
with universal quotient sequence
$$0 \rarr S_U \rarr \oh_U \rarr Q_U \rarr 0$$
obtained from \eqref{jwq}.
The complex 
$R\pi_*(S^*_U) \in D^b_{coh}(\overline{M}_{g,m|d})$
will arise naturally in localization calculations on
the moduli of stable quotients.
Base change of the complex to 
$$[C,{p}_1,\ldots, {p}_m, \widehat{p}_1,
\ldots,\widehat{p}_d] \in \overline{M}_{g,m|d}$$
computes the cohomology groups 
$$H^0(C, \oh_C(\sum_{j=1}^d \widehat{p}_j)), \ \
H^1(C, \oh_C(\sum_{j=1}^d \widehat{p}_j))$$
with varying ranks.

A canonical resolution by vector bundles of $R\pi_*(S^*_U)$
is easily obtained from the sequence
\begin{equation}\label{peww}
0 \rarr \oh_C \rarr \oh_C(\sum_{j=1}^d \widehat{p}_j) \rarr
\oh_C(\sum_{j=1}^d \widehat{p}_j)|_{\sum_{j=1}^d \widehat{p}_j} \rarr 0.
\end{equation}
The rank $d$ bundle
$$\mathbb{B}_d \rarr \overline{M}_{g,m|d}$$
with fiber
$$H^0(C,\oh_C(\sum_{j=1}^d \widehat{p}_j)|_{\sum_{j=1}^d \widehat{p}_j})$$
is obtained from the geometry of the points $\widehat{p}_j$.
The Chern classes of $\mathbb{B}_d$ are universal polynomials in the
$\widehat{\psi}_j$ and $D_J$ classes.
Up to a rank 1 trivial factor, $R\pi_*(S^*_U)$
is equivalent to the complex
$$\mathbb{B}_d \rarr \mathbb{E}^*$$
obtained from the derived push-forward of \eqref{peww}.

\subsection{General $n$}
While the moduli space
$$\overline{Q}_{g,m}(\mathbb{G}(1,1),d) \rarr \overline{M}_{g,m}$$
may be viewed simply as a compactification of the symmetric
product of the universal curve over $\overline{M}_{g,m}$,
the moduli space $\overline{Q}_{g,m}(\mathbb{G}(n,n),d)$ is more
difficult to describe since the stable subbundles have higher rank.
Nevertheless, since $\text{Ext}^1(S,Q)$ always vanishes, we obtain
the following result.

\begin{Proposition}
$\overline{Q}_{g,m}(\mathbb{G}(n,n),d)$ is nonsingular
of expected dimension $3g-3+m + nd$.
\end{Proposition}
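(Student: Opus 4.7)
The plan is to use the relative two-term obstruction theory of Theorem \ref{htr} and show its obstruction sheaf $\text{Ext}^1(S,Q)$ vanishes identically. Since the Artin stack $\mathcal{M}_{g,m}$ is smooth of dimension $3g-3+m$, such a vanishing will force the morphism $\nu^A : \overline{Q}_{g,m}(\mathbb{G}(n,n),d)\rarr \mathcal{M}_{g,m}$ to be smooth, and hence $\overline{Q}_{g,m}(\mathbb{G}(n,n),d)$ will be nonsingular of dimension $3g-3+m+\hom(S,Q)$. A Riemann--Roch computation of the relative dimension $\hom(S,Q)$ will then yield the expected value $nd$.

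The central step is the cohomological vanishing $\text{Ext}^1(S,Q)=0$ at every closed point. Because $S$ has rank $r=n$ equal to that of $\com^n\otimes \oh_C$, the quotient $Q$ is automatically a torsion sheaf, and quasi-stability confines its support to the zero-dimensional locus $C^{ns}\setminus\{p_1,\ldots,p_m\}$. Since $S$ is locally free, one has $\mathcal{E}xt^i(S,Q)=0$ for $i>0$, and the local-to-global spectral sequence collapses to give $\text{Ext}^i(S,Q) = H^i(C,S^*\otimes Q)$. The tensor product $S^*\otimes Q$ inherits the zero-dimensional support of $Q$, so its $H^1$ vanishes trivially, as required.

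For the dimension count I would compute $\hom(S,Q)=\chi(S,Q)$ via Riemann--Roch. Trivializing $S$ on a neighborhood of $\text{supp}(Q)$ gives $S^*\otimes Q \cong Q^{\oplus n}$, a torsion sheaf whose length equals $n\cdot \text{length}(Q)=nd$, where the identity $\text{length}(Q)=d$ follows from $\deg S=-d$. Hence $\chi(S,Q)=nd$, and adding $\dim \mathcal{M}_{g,m}=3g-3+m$ yields the claim. I do not foresee any serious obstacle: this is essentially the classical smoothness of the punctual Quot scheme of a curve, with the variation of $C$ in moduli absorbed by the smooth morphism $\nu^A$ and the condition that $Q$ avoid the nodes and markings already built into the definition of a stable quotient.
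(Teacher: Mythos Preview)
Your argument is correct and follows exactly the approach indicated in the paper: the paper simply asserts that $\text{Ext}^1(S,Q)$ always vanishes (because $Q$ is torsion with zero-dimensional support when $r=n$), and the dimension count is the Riemann--Roch calculation of Lemma~\ref{kkop} specialized to $r=n$. You have filled in the details of the vanishing via the local-to-global spectral sequence and computed $\chi(S,Q)=nd$ directly, which is precisely what the paper leaves implicit.
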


\section{Gromov-Witten comparison}\label{gwcomp}
\subsection{Dimensions}
The moduli space of stable maps $\overline{M}_{g,m}({\mathbb{G}}(r,n),d)$
also carries a perfect obstruction theory and a virtual class.
In order to compare with the moduli space of stable quotients, we
will always assume  
$2g-2+m\geq 0$ and $0 < r < n$.

\begin{Lemma} \label{kkop}
The virtual dimensions of the spaces 
$\overline{M}_{g,m}({\mathbb{G}}(r,n),d)$ and 
$\overline{Q}_{g,m}({\mathbb{G}}(r,n),d)$ are equal.
\end{Lemma}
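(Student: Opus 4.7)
The plan is to compute both virtual dimensions separately via Riemann--Roch and observe that they agree. The argument has no serious obstacle and is essentially bookkeeping, but the two sides reach the common value by slightly different routes.

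On the stable maps side I would invoke the standard formula
\[
\dim^{\mathrm{vir}} \overline{M}_{g,m}(X,\beta) \;=\; \int_\beta c_1(T_X) \,+\, (\dim X)(1-g) \,+\, 3g-3+m .
\]
For $X = \mathbb{G}(r,n)$, one has $\dim X = r(n-r)$ and $c_1(T_X)$ is $n$ times the positive generator of $H^2$, so $\int_d c_1(T_X) = nd$. This yields the value $nd + r(n-r)(1-g) + 3g-3+m$.

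On the stable quotients side I would use Theorem \ref{htr}: the obstruction theory relative to $\nu^A\colon \overline{Q}_{g,m}(\mathbb{G}(r,n),d)\to\mathcal{M}_{g,m}$ is $R\mathrm{Hom}(S,Q)$, so the relative virtual dimension on a fiber equals $\chi(S,Q)$. Since $S$ is locally free, $\mathrm{Ext}^i(S,Q) = H^i(C,S^*\otimes Q)$, and tensoring the defining sequence by the locally free sheaf $S^*$ keeps it exact even when $Q$ has torsion. Applying Riemann--Roch to $S^*$ and to $S^*\otimes S$ --- where the latter has rank $r^2$ and degree $0$ because $\deg S = -d$ --- extracts $\chi(S,Q) = nd + r(n-r)(1-g)$. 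Adding $\dim \mathcal{M}_{g,m} = 3g-3+m$ produces the same quantity as above.

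Comparing the two expressions finishes the proof. The one substantive subtlety is to confirm that exactness of the tensored sequence survives the torsion in $Q$; this is immediate from the local freeness of $S^*$, which forces $\mathrm{Tor}_1(S^*,Q) = 0$. All other steps are elementary Riemann--Roch on a nodal curve.
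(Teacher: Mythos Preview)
Your proof is correct and follows essentially the same route as the paper: compute the stable map side by the standard virtual dimension formula $nd + r(n-r)(1-g) + 3g-3+m$, compute the stable quotient side as $\chi(S,Q) + 3g-3+m$ via Riemann--Roch, and observe that the two agree. Your treatment is in fact slightly more explicit than the paper's, which simply states the Riemann--Roch value of $\chi(S,Q)$ without unpacking the computation.
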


\bpf The virtual dimension of the moduli space of stable maps is
$$\int_{\beta} c_1(T) +  (\text{dim}_\com \ {\mathbb{G}}(r,n) -3) (1-g) +m 
=nd + ( r(n-r)-3)(1-g)+m.
$$
where $\beta$ is the degree $d$ curve class and $T$ is the
tangent bundle of ${\mathbb G}(r,n)$. 
Similarly, the virtual dimension of the moduli of stable
quotients is
$$\chi(S,Q) + 3g-3+m = nd+ r(n-r)(1-g)+3g-3+m,$$
by Riemann-Roch,
which agrees. 
\epf

\subsection{Stable maps to stable quotients}
There exists a natural morphism
$$c:\overline{M}_{g,m}({\mathbb{G}}(1,n),d)
\rightarrow 
\overline{Q}_{g,m}({\mathbb{G}}(1,n),d).$$
Given a stable map
$$f: (C,p_1,\ldots,p_m) \rarr \mathbb{G}(1,n)$$
of degree $d$, the image 
$c([f]) \in \overline{Q}_{g,m}({\mathbb{G}}(1,n),d)$
is obtained by the following construction.

The first step is to consider the minimal contraction
$$\kappa: C \rarr \widehat{C}$$
of rational components yielding
a quasi-stable curve $(\widehat{C},p_1,\ldots,p_m)$
with the automorphism group of each component
of dimension at most 1. The minimal contraction $\kappa$
is unique --- the exceptional curves of $\kappa$
are the maximal connected trees $T\subset C$ of rational curves which
\begin{enumerate}
\item[(i)] contain no markings, 
\item[(ii)] meet $\overline{C\setminus T}$ in a single point.
\end{enumerate}
Let $T_1, \ldots, T_t$ be the set of maximal trees
satisfying (i) and (ii). Then,
$$\widehat{C} = \overline{ C\setminus \cup_i T_i}$$
 is canonically a subcurve of $C$.
Let $x_1, \ldots, x_t \in \widehat{C}^{ns}$
be the points of incidence with the trees
$T_1, \ldots, T_t$
respectively.

Let $d_i$ be the degree of the restriction of $f$ to $T_i$.
Let
$$0 \rarr S \rarr \com^n\otimes \oh_{\widehat{C}} \rarr Q \rarr 0$$
be the pull-back by the restriction of
$f$ to $\widehat{C}$ of the tautological sequence
on $\mathbb{G}(1,n)$.
The canonical inclusion
$$0 \rarr S(-\sum_{i=1}^t d_i x_i) \rarr S$$
yields a new quotient
$$0 \rarr S(-\sum_{i=1}^t d_i x_i)
 \rarr \com^n\otimes \oh_{\widehat{C}} \stackrel{\widehat{q}}
\rarr \widehat{Q} \rarr 0.$$
Stability of the map $f$ implies $(\widehat{C},p_1,\ldots,p_m,\widehat{q})$
is a stable quotient. We define
$$c([f]) = (\widehat{C},p_1,\ldots,p_m,\widehat{q})\in
\overline{Q}_{g,m}({\mathbb{G}}(1,n),d).$$

The morphism $c$ has been studied earlier for genus $0$ curves
in the linear sigma model constructions of \cite{Giv}.
See Lemma $2.6$ of \cite{LLY} for a scheme theoretic discussion by J. Li.
The morphism $c$ is considered for the Quot scheme of
a fixed nonsingular curve of arbitrary genus in
\cite{Popa}.

\subsection{Equivalence}
The strongest possible comparison result holds
for $\mathbb{G}(1,n)$.

\begin{Theorem} 
$c_*[\overline{M}_{g,m}({\mathbb{G}}(1,n),d)]^{vir}=  
[\overline{Q}_{g,m}({\mathbb{G}}(1,n),d)]^{vir}$.
\end{Theorem}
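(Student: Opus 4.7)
\medskip

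The plan is to apply a virtual pushforward argument: the equality of virtual classes should follow from the birationality of $c$ together with a compatibility of obstruction theories along $c$. The virtual dimensions already agree by Lemma \ref{kkop}.

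First, I would verify that $c$ is birational. On the open substack
\[
M_{g,m}({\mathbb G}(1,n),d) \subset \overline{M}_{g,m}({\mathbb G}(1,n),d)
\]
of stable maps whose domain carries no maximal rational tree satisfying conditions (i) and (ii), the contraction $\kappa$ is trivial and $c$ is manifestly an isomorphism onto the open substack of $\overline{Q}_{g,m}({\mathbb G}(1,n),d)$ parameterizing quotients with torsion-free $Q$. Both opens are dense, so $c$ has degree one.

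\medskip

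Second, I would compare the two obstruction theories along $c$. Using $T_{{\mathbb G}(1,n)} \cong \mathcal{H}om(\mathcal{S}, \mathcal{Q})$, the standard obstruction theory on $\overline{M}_{g,m}({\mathbb G}(1,n),d)$ relative to the Artin stack of pointed prestable curves is
\[
E_M^\bullet = R\pi_{M*}\, f_M^*T_{{\mathbb G}(1,n)} = R\pi_{M*}\,\mathcal{H}om\bigl(f_M^*\mathcal{S},\, f_M^*\mathcal{Q}\bigr),
\]
while Theorem \ref{htr} gives the obstruction theory on $\overline{Q}_{g,m}({\mathbb G}(1,n),d)$ relative to $\mathcal{M}_{g,m}$ as
\[
E_Q^\bullet = R\pi_{Q*}\,\mathcal{H}om\bigl(S_Q,\, Q_Q\bigr).
\]
Let $\kappa: C_M \rarr \widehat{C}_M$ be the universal tree-contraction morphism, so that $\widehat{C}_M = c^*C_Q$. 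By the construction of $c$ there is a canonical identification $S_Q|_{\widehat{C}_M} = (f_M^*\mathcal{S})|_{\widehat{C}_M}(-\sum d_i x_i)$, together with an exact sequence relating $Q_Q$ to $f_M^*\mathcal{Q}|_{\widehat{C}_M}$ modulo torsion supported at the $x_i$. The essential step is to derive from these a quasi-isomorphism
\[
c^*E_Q^\bullet \;\stackrel{\sim}{\longrightarrow}\; E_M^\bullet
\]
compatible with the maps to the relative cotangent complexes; this reduces, via the projection formula and base change, to a local computation of $R\kappa_*\,\mathcal{H}om(f_M^*\mathcal{S}, f_M^*\mathcal{Q})$ on each contracted tree.

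\medskip

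Third, once the obstruction theories are matched, a Costello-type virtual pushforward argument combined with the degree-one birationality from the first step yields
\[
c_*[\overline{M}_{g,m}({\mathbb G}(1,n),d)]^{vir} = [\overline{Q}_{g,m}({\mathbb G}(1,n),d)]^{vir},
\]
as required.

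\medskip

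The main obstacle is the tree-by-tree comparison in the second step. For a rational chain $T \subset C_M$ of total map-degree $d_T$ meeting $\widehat{C}_M$ at the single point $x$, one must verify that the derived pushforward $R\kappa_*\,\mathcal{H}om(f^*\mathcal{S}, f^*\mathcal{Q})|_T$ produces precisely the correct torsion contribution of length $d_T$ at $x$ on the $Q_Q$ side and the twist by $-d_T\hpo x$ on the $S_Q$ side, together with a vanishing $R^1\kappa_*$ in the relevant range, and that the compatibility with the Atiyah-class maps into the cotangent complex is preserved. A secondary technical point is comparing the two different base Artin stacks (pointed prestable curves for stable maps versus $\mathcal{M}_{g,m}$ for stable quotients), which amounts to checking that the natural stabilization morphism is smooth and matches the two relative theories to a common absolute obstruction theory.
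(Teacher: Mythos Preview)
Your proposal contains a genuine gap: the claimed quasi-isomorphism $c^*E_Q^\bullet \stackrel{\sim}{\rarr} E_M^\bullet$ is false, and this is visible already in the example worked out in Section~\ref{gwcomp} of the paper. Take $g=1$, $m=0$, $d=1$ for $\proj^{n-1}$ with $n\geq 3$. There $\overline{Q}_{1,0}(\proj^{n-1},1)=\overline{M}_{1,1}\times\proj^{n-1}$ is nonsingular of the expected dimension $n$, so $E_Q^\bullet$ (absolute or relative) is a bundle concentrated in degree~$0$. On the other hand $\overline{M}_{1,0}(\proj^{n-1},1)=\overline{M}_{1,1}\times I$ has dimension $2n-2$, and its obstruction bundle has rank $n-2>0$: the complex $E_M^\bullet$ has nonvanishing $H^1$. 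The pullback $c^*E_Q^\bullet$ is still a vector bundle in a single degree and cannot be quasi-isomorphic to $E_M^\bullet$. Over the contracted rational tails the two theories genuinely differ term by term, even though their Euler characteristics agree by Lemma~\ref{kkop}. Consequently no direct Costello/Manolache virtual pushforward with matching obstruction theories is available; any compatibility triangle would need a nontrivial relative obstruction theory for $c$, and producing and controlling it is not a routine local computation.

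The paper proceeds by an entirely different route. It localizes both virtual classes with respect to the maximal torus $\mathbf{T}\subset\mathbf{GL}_n(\com)$ and compares residues after pushing forward to $\overline{Q}_{g,m}(\mathbb{G}(1,\binom{n}{r}),d)$. The vertex contributions on the two sides differ, but the genus-dependent part (the Hodge factor) is identical; the discrepancy is a universal polynomial in the classes $\widehat{\psi}_j$ and $D_J$ on $\overline{M}_{g,m|d}$, independent of $g$ and $m$. In genus~$0$ both moduli spaces are irreducible, nonsingular of expected dimension, and birational, so the equality of pushforwards is trivial there; this forces the two universal polynomials to agree as classes on $\overline{M}_{0,m|d}$ for all $m$. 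A separate independence lemma (Lemma~\ref{lm23}) then shows that such agreement forces equality as \emph{abstract} polynomials, which transports the identity to arbitrary genus. In short, the paper does not compare obstruction theories along $c$ at all; it bootstraps from the geometrically obvious genus~$0$ case via a genus-independent combinatorial identity.
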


If $r>1$, a morphism $c$ for $\mathbb{G}(r,n)$
does not in general exist.
However, the following construction provides a substitute.
Recall the Pl\"ucker embedding
$$\iota: {\mathbb{G}}(r,n) \rarr 
{\mathbb{G}}(1,\binom{n}{r}).$$
The Pl\"ucker embedding induces canonical maps
$$\iota_M:\overline{M}_{g,m}({\mathbb{G}}(r,n),d) \rarr
\overline{M}_{g,m}({\mathbb{G}}(1,\binom{n}{r}),d),$$
$$\iota_Q:\overline{Q}_{g,m}({\mathbb{G}}(r,n),d) \rarr
\overline{Q}_{g,m}({\mathbb{G}}(1,\binom{n}{r}),d).$$
The morphism $\iota_M$ is obtained by composing stable
maps with $\iota$. The morphism $\iota_Q$ is obtained by 
associating the subsheaf
$$0\rarr \wedge^r S \rarr \wedge^r \com^n \otimes \oh_C$$
to the subsheaf $0 \rarr S \rarr \com^n \otimes \oh_C$.

\begin{Theorem}\label{bestcom} 
For  $0<r<n$ and all classes 
$\gamma_i \in A^*_{\mathbf{GL}_n(\com)}
({\mathbb{G}}(r,n),\mathbb{Q})$,
\begin{multline*}
c_*\iota_{M*}\Big(\prod_{i=1}^m \text{\em ev}^*_i(\gamma_i) \ \cap \
[\overline{M}_{g,m}({\mathbb{G}}(r,n),d)]^{vir}\Big)= \\
  \iota_{Q*}\Big(\prod_{i=1}^m \text{\em ev}^*_i(\gamma_i) \ \cap \
[\overline{Q}_{g,m}({\mathbb{G}}(r,n),d)]^{vir}\Big).
\end{multline*}
\end{Theorem}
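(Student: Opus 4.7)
My plan is to reduce the theorem to the preceding $\mathbb{G}(1,n)$ comparison (the Theorem stated just before this one) via the Pl\"ucker embedding and $\mathbf{GL}_n(\com)$-equivariant localization. Write $N = \binom{n}{r}$.

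The first input is a straightforward compatibility: $\iota$ is $\mathbf{GL}_n(\com)$-equivariant, and the moduli-level maps $\iota_M$ and $\iota_Q$ intertwine the evaluation maps marking by marking,
$$\iota \circ \text{ev}_i^{\mathbb{G}(r,n)} = \text{ev}_i^{\mathbb{G}(1,N)} \circ \iota_M, \qquad \iota \circ \text{ev}_i^{\mathbb{G}(r,n)} = \text{ev}_i^{\mathbb{G}(1,N)} \circ \iota_Q,$$
because the universal subsheaf at a marking is replaced by its $r$-th exterior power, which is precisely the fiberwise Pl\"ucker map. Together with the projection formula, this will propagate the evaluation insertions $\text{ev}_i^*(\gamma_i)$ through $\iota_{M*}$ and $\iota_{Q*}$ once the underlying virtual classes are matched.

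The main step is the comparison of virtual classes after Pl\"ucker pushforward, and I would carry it out by passing to a maximal torus $T \subset \mathbf{GL}_n(\com)$ and invoking virtual equivariant localization. The $T$-fixed loci of $\overline{M}_{g,m}(\mathbb{G}(r,n),d)$ and $\overline{Q}_{g,m}(\mathbb{G}(r,n),d)$ are indexed by decorated graphs whose vertex labels are $T$-fixed coordinate $r$-planes in $\mathbb{G}(r,n)$, and under Pl\"ucker each such fixed locus embeds into a $T$-fixed locus of the corresponding $\mathbb{G}(1,N)$ moduli space. The virtual normal bundles, read off from $R\pi_* f^* T_{\mathbb{G}(r,n)}$ on the map side and from $R\mathcal{H}om(S,Q)$ on the quotient side, differ from their $\mathbb{G}(1,N)$ analogs by explicit contributions controlled by the Pl\"ucker conormal sequence
$$0 \to N_\iota^* \to \iota^* \Omega_{\mathbb{G}(1,N)} \to \Omega_{\mathbb{G}(r,n)} \to 0.$$
The contraction $c$ alters only the underlying curve by collapsing unstable rational trees and does not touch the fiberwise Pl\"ucker data; hence $c$ is compatible with the Pl\"ucker embedding. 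Applying the $r=1$ theorem fixed-locus by fixed-locus on the $\mathbb{G}(1,N)$ moduli spaces, and then summing, yields the desired identity.

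The main obstacle is the virtual normal bundle bookkeeping in the core step: one must verify that exactly the same Pl\"ucker correction appears on the map side and on the quotient side. This is the assertion that the two rather different obstruction theories, $R\pi_* f^* T$ versus $R\mathcal{H}om(S,Q)$, are compatible with $\iota$ in a uniform way. I expect this to follow by careful tracking through the conormal sequence above, together with the observation that the exceptional rational trees contracted by $c$ carry no Pl\"ucker data of their own, so they contribute identically on both sides of the equation.
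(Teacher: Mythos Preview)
Your approach differs substantially from the paper's, and it contains a genuine gap at exactly the point you flag as the ``main obstacle.''

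The paper does \emph{not} reduce Theorem~4 to Theorem~3 via the Pl\"ucker embedding; it proves both simultaneously by a different mechanism. After localizing with respect to the maximal torus, the paper writes the vertex contributions on each side explicitly and observes that they factor as (Hodge-bundle part) $\times$ (local point part). The Hodge parts are \emph{identical} on the map and quotient sides; the local parts are universal, genus-independent polynomials in the classes $\widehat{\psi}_j$ and $D_J$ on the mixed pointed spaces $\overline{M}_{g,m|d}$. The entire comparison is thus reduced to an equality of two such polynomials $L^C_{d,\xi}$ and $R^C_{d,\xi}$. This equality is established in genus $0$, where both moduli spaces are nonsingular, irreducible, and birational (so the virtual classes trivially agree), and then promoted to an identity of abstract polynomials by a nondegeneracy-of-pairing argument (the independence lemma of Section~\ref{indd}). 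Genus independence of the cotangent calculus then gives the result for all $g$.

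Your plan, by contrast, needs a ``Pl\"ucker correction'' relating the two obstruction theories to their $\mathbb{G}(1,N)$ analogs, and you propose to read it off from the conormal sequence of $\iota:\mathbb{G}(r,n)\hookrightarrow\mathbb{G}(1,N)$. On the stable-map side this is reasonable: there is an honest map $f:C\to\mathbb{G}(r,n)$ along which the conormal sequence may be pulled back. On the stable-quotient side there is \emph{no such map} when $Q$ has torsion, so there is nothing to pull the conormal sequence back along; the comparison of $R\mathcal{H}om(S,Q)$ with $R\mathcal{H}om(\wedge^r S,\com^N/\wedge^r S)$ is not governed by the geometry of $\iota$ in any evident way. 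Your expectation that ``the same Pl\"ucker correction appears on both sides'' therefore has no mechanism behind it. Furthermore, even granting Theorem~3 as a black box, it is a global identity of pushed-forward virtual classes and does not localize to the restricted subsets of $\mathbf{T}$-fixed loci lying in the images of $\iota_M$ and $\iota_Q$; the $\mathbf{T}$-invariant curves in $\mathbb{G}(1,N)$ not coming from $\mathbb{G}(r,n)$ contribute to the localization of Theorem~3 but are irrelevant to Theorem~4, and you give no way to separate these. The paper's genus-$0$/independence-lemma route avoids both difficulties entirely.
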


Since
descendent classes in both cases are easily seen to 
be pulled-back via $c\circ \iota_M$ and $\iota_Q$
respectively, there is no need to include them in
the statement of Theorem \ref{bestcom}.
In particular, Theorem \ref{bestcom} implies the
fully equivariant stable map and stable quotient
brackets (and CoFT) are equal.

\subsection{Example}
To see Theorems 3 and 4 are not purely formal, we can
study the case of genus 1 maps to $\proj^{n-1}$ of degree $1$ for $n\geq 2$.
Let 
$$I\subset \proj^{n-1} \times {\mathbb{G}}(2,n)$$
be the incidence correspondence consisting of points and
lines $(p,L)$ with $p\in L$.
The moduli space of stable maps is
$$\overline{M}_{1,0}(\proj^{n-1},1) = \overline{M}_{1,1} \times I.$$
We will denote elements of the moduli space of stable maps
by $(E, p, L)$
where $(E,p)\in \overline{M}_{1,1}$ and $(p,L)\in I$.
We have already seen
$$\overline{Q}_{1,0}(\proj^{n-1},1) = \overline{M}_{1,1} \times \proj^{n-1}.$$
The morphism
$$c:\overline{M}_{1,0}({\proj}^{n-1},1)
\rightarrow 
\overline{Q}_{1,0}(\proj^{n-1},1)$$
is given by the projection
$$I \rarr \proj^{n-1}$$
onto the first factor.
The virtual class of the moduli space of stable maps is
easily computed from deformation theory,
$$[\overline{M}_{1,0}(\proj^{n-1},1)]^{vir} = c_{n-2}(\text{Obs})\cap
[\overline{M}_{1,0}(\proj^{n-1},1)],$$
where the rank $n-2$ obstruction bundle is
$$\text{Obs}_{(E,p,L)}= \frac{{\mathbb{E}}^* \otimes T_p(\proj^{n-1})}
{\Psi_p^* \otimes T_p(L)}
= {\mathbb{E}}^* \otimes N_p(\proj^{n-1}/L)
\ . $$
Here, $\mathbb{E}$ is the Hodge bundle on $\overline{M}_{1,1}$,
$\Psi_p$ is the cotangent line, and 
$N_p(\proj^{n-1}/L)$ is the normal space to $L\subset \proj^{n-1}$
at $p$.
We see
\begin{multline*}
c_{n-2}(\text{Obs}) = c_{n-2}(N_p(\proj^{n-1}/L)) -\lambda
c_{n-3}(N_p(\proj^{n-1}/L))\\
 +
\lambda^2 c_{n-4}(N_p(\proj^{n-1}/L))+ \ldots\ 
\end{multline*}
where $\lambda=c_1(\mathbb{E})$.
Since $I \rarr \proj^{n-1}$ is a $\proj^{n-2}$-bundle,
\begin{eqnarray*}
c_*[\overline{M}_{1,0}(\proj^{n-1},1)]^{vir} & = &
c_*( c_{n-2}(N_p(\proj^{n-1}/L)) \cap 
[\overline{M}_{1,0}(\proj^{n-1},1)])
\\
& = &
[\overline{Q}_{1,0}(\proj^{n-1},1)] \\
& = &
[\overline{Q}_{1,0}(\proj^{n-1},1)]^{vir}.
\end{eqnarray*}
For the second equality,
we use the elementary projective geometry
calculation
$$c_{n-2}(Q)=1$$
where $Q$ is universal rank $n-2$ quotient on
the projective space of lines in $\com^{n-1}$.
The last equality follows since the moduli space of
stable quotients is nonsingular of expected dimension.

\section{Construction}
\label{pppr}

\subsection{Quotient presentation}
Let $g$, $m$, and $d$ satisfy
$$2g-2+m+\epsilon d >0$$
 for all $\epsilon >0$.
We will exhibit
the moduli space $\overline Q_{g,m}(\mathbb G(r,n), d)$ 
 as a quotient stack.  

To begin, fix a stable quotient $(C, p_1, \ldots, p_m, q)$ where 
$$0\to S\to \com^n\otimes \oh_C\stackrel{q}{\to} Q\to 0.$$ 
By assumption, the line bundle $$\mathcal L_{\epsilon}=
\omega_C(p_1+\ldots+p_m)\otimes (\Lambda^r S^{*})^{\epsilon}$$ 
is ample for all $\epsilon>0$. 
The
genus $0$ components of $C$ must
contain at least $2$ nodes or markings 
with strict inequality for components of degree $0$.
As a consequence, ampleness of $\mathcal L_{\epsilon}$ for 
$\epsilon=\frac{1}{d+1}$ 
is enough to ensure the stability of a degree $d$ quotient. 
We will fix $\epsilon=\frac{1}{d+1}$ throughout.  

By standard arguments, there exists a sufficiently large and 
divisible integer $f$
such that the line bundle 
$\mathcal L^f$ is very ample with no higher cohomology 
$$H^1(C, \mathcal L^f)=0.$$ 
We will show{\footnote{In fact, the result
is true for $k\geq 3$, but the arguments for $k\geq 5$
are simpler.}}
that for all $k\geq 5$, the choice
$$f=k(d+1)$$ 
suffices. Then,
$$\mathcal L^f=\left(\omega_C \left(\sum_{i=1}^m p_i\right)\right)^{k(d+1)}
\otimes\left (\Lambda^{r} S^{*}\right)^{k}.$$

To check very ampleness, 
we verify 
\begin{equation}\label{nn999}
H^1\left(C, \mathcal L^f \otimes I_{q_1} I_{q_2}\right)=0
\end{equation}
for all pairs of (not necessarily distinct) points
 $q_1, q_2\in C$. 
By duality, the vanishing \eqref{nn999}
is equivalent to 
$$\text {Ext}^0(I_{q_1} I_{q_2}, \omega_C\otimes \mathcal L^{-f})=0.$$ 
If $q_1, q_2\in C^{ns}$, 
we can check instead
 $$H^0(C, \omega_C(q_1+q_2)\otimes \mathcal L^{-f})=0,$$ 
which is clear since the line bundle has negative degree on each component. 
The following three cases also need to be taken into account:
\begin {enumerate}
\item [(i)] $q_1$ is node and $q_2\in C^{ns}$,
\item [(ii)] $q_1$ and $q_2$ are distinct nodes,
\item [(iii)] $q_1=q_2$ are  coincident nodes.
\end {enumerate} 
Cases (i-iii) can be easily handled. 
For instance, to check (i), consider the normalization at $q_1$, 
$$\pi:\widetilde C\to C,$$ 
and let $\pi^{-1}(q_1)=\{q_1', q_1''\}$. 
We have 
$$\text {Ext}^0(I_{q_1} I_{q_2}, \omega_C\otimes \mathcal L^{-f})=
H^0(\widetilde C, \omega_{\widetilde C}\otimes \pi^{*} 
\mathcal L^{-f}(q'_1+q''_1+q_2))$$ 
which vanishes since the line bundle on the right
has negative degree 
on each component. 
The other two cases are similar.
The condition $k\geq 5$ is used in (iii).

By the vanishing of the higher cohomology,
the dimension 
\begin{equation}\label{frt34}
h^0(C, \mathcal L^f)=1-g+k(d+1)(2g-2+m)+kd
\end{equation} 
is independent of the choice of stable quotient. 
Let $\mathsf V$ be a vector space of dimension \eqref{frt34}.
Given  
an identification 
$$H^0(C, \mathcal L^f)\cong \mathsf V^{*},$$ 
we obtain an embedding 
$$i:C\hookrightarrow \mathbb P(\mathsf V),$$ 
well-defined up to the action of the group ${\bf PGL}(\mathsf V)$. 
Let
$\mathsf{Hilb}$ denote the
Hilbert scheme of curves in $\mathbb P(\mathsf V)$ of 
genus $g$ and degree \eqref{frt34}. 
Each stable quotient gives rise to a point in 
$${\mathcal H}=\mathsf{Hilb}\times \mathbb P(\mathsf V)^{m},$$ where 
the last factors record the locations of the markings $p_1, \ldots, p_m$.

Elements of $\mathcal{H}$ are tuples
$(C,p_1,\ldots,p_m)$.
A quasi-projective subscheme ${\mathcal H}'\subset\mathcal{H}$ 
is defined by requiring 
\begin {enumerate}
\item [(i)] the points $p_1,\ldots,p_m$ are contained in $C$,
\item [(ii)] the curve $(C, p_1, \ldots, p_m)$ is quasi-stable.
\end {enumerate} 
We denote the universal curve over $\mathcal{H}'$ by
 $$\pi:\mathcal{C}'\to \mathcal{H}'.$$ 

 Next, we construct the $\pi$-relative Quot scheme 
$$\mathsf {Quot}(n-r,d)\rarr \mathcal{H}'$$ 
parametrizing rank $n-r$ degree $d$ quotients 
$$0\to S\to \com^n \otimes \oh_C \to Q\to 0$$ on the fibers of $\pi$. 
A locally closed subscheme $$\mathcal{Q}'
\subset \mathsf{Quot}(n-r,d)$$ is further singled out by requiring 
 \begin{enumerate}
\item[(iii)] $Q$ is locally free at the nodes and markings of $C$,
\item [(iv)] the restriction of $\mathcal O_{\mathbb P(\mathsf V)}(1)$ 
to $C$ agrees with the line bundle  
$$\left(\omega\left(\sum p_i\right)\right)^{k(d+1)}\otimes 
(\Lambda^{r}S^{*})^{k}.$$
\end {enumerate}

The action of ${\bf PGL}(\mathsf V)$ extends to $\mathcal{H}'$ 
and $\mathcal{Q}'$.
A  ${\bf PGL}(\mathsf V)$-orbit in $\mathcal{Q}'$ 
corresponds to a stable quotient up to isomorphism. 
By stability, each orbit has finite stabilizers. 
The moduli space $\overline Q_{g,m}(\mathbb G(r,n),d)$ 
is the stack quotient $\left[\mathcal{Q}'/\, {\bf PGL}(\mathsf V)\right]$.

\subsection{Separatedness} We prove
the moduli stack $\overline Q_{g,m}(\mathbb G(r,n), d)$ is separated 
by the valuative criterion. 

Let $(\Delta, 0)$ be a nonsingular pointed curve with complement 
$$\Delta^{0}=\Delta\setminus \{0\}.$$ 
We consider two flat families of 
quasi-stable pointed curves $$\mathcal X_i\to \Delta, \ \ \ \ \ \
p_1^i, \ldots, p_m^i:\Delta\to \mathcal X_i,$$ and two 
flat families of stable quotients 
$$0\to S_i\to \com^n \otimes
\mathcal{O}_{\mathcal{X}_i}\to Q_i\to 0,$$
for $1\leq i\leq 2$. 
We assume the two families are isomorphic
 away from the central fiber. We will show the isomorphism extends over $0$. 
In fact, by the separatedness of the Quot functor, 
we only need to show that 
the isomorphism extends to the 
families of curves $\mathcal X_i\to \Delta$ 
in a manner preserving the sections. 

By the semistable reduction theorem, 
possibly after base change ramified over $0$, 
there exists a third family 
$$\mathcal Y\to \Delta,\ \ \ \ \ \ p_1, \ldots, p_m:\Delta\to \mathcal Y$$ 
of quasi-stable pointed curves and dominant morphisms 
$$\pi_i:\mathcal Y\to \mathcal X_i$$ 
compatible with the sections. 
We may assume that $\pi_i$ restricts to an isomorphism away from the nodes of
$(\mathcal X_i)_0$. 

After pull-back, we obtain exact sequences of quotients
\begin{equation}\label{lw346}
0 \rarr \pi_i^*S_i \rarr 
\com^n \otimes
\mathcal{O}_{\mathcal{Y}}\to \pi_i^{*} Q_i\to 0
\end{equation}
on $\mathcal Y$ of the same degree and rank. 
Exactness holds after pull-back since the quotient $Q_i$ is locally free 
at the nodes of $(\mathcal X_i)_0$. 

The two pull-back sequences \eqref{lw346}
must agree on the central fiber by 
the separatedness of the Quot functor. 
We claim the central fiber $\mathcal Y_0$ cannot contain components 
which are contracted over the nodes of $(\mathcal X_1)_0$ but uncontracted 
over the nodes of $(\mathcal X_2)_0$. 
Indeed, if such a component $E$ existed, 
the quotient $\pi^{*}_1Q_1$ would be trivial on $E$, 
whereas by stability, 
the quotient 
$\pi^{*}_2Q_2$
could not be trivial. 
We conclude the
families $\mathcal X_1$ and $\mathcal X_2$ are isomorphic. \qed

\subsection {Properness} We prove the moduli stack 
$\overline Q_{g,m}(\mathbb G(r,n), d)$
is proper by the valuative criterion.
Let 
$$\pi^{0}:\mathcal X^{0} \to \Delta^{0},\ \ \ \ \ \  
p_1, \ldots, p_m:\Delta^{0}\to \mathcal X^{0}$$
carry a flat 
family of stable quotients 
\begin{equation}\label{ld56}
0\to S\to \com^n \otimes \mathcal{O}_{\mathcal{X}^0} \to Q\to 0
\end{equation}
which we must extend over $\Delta$, possibly after base-change. 
By standard reductions, after base change and normalization, 
we may assume the fibers of $\pi^0$ 
are nonsingular and irreducible curves, possibly after adding the 
preimages of the nodes to the marking set. 
The original family is reconstructed by gluing stable quotients 
on different components via the evaluation maps at the nodes. 

Once the general fiber of $\pi^0$
 is assumed to be nonsingular, 
we construct an extension $$\pi: \mathcal X\to \Delta,\ \ \ \ \ \
 p_1, \ldots, p_m:\Delta\to \mathcal X$$ 
with central fiber an $m$-pointed stable curve.{\footnote{There
are
exactly two cases where the central fiber can not be taken to stable,
$$(g,m)=(0,2) \ \ \text{or} \ \ (1,0)\ .$$
In both cases, the central fiber can be taken to be irreducible and
nodal. The argument afterwards is the same. We leave the details
to the reader.}}
After resolving the possible singularities of the total space  at the
nodes of $\mathcal{X}_0$ by blow-ups, we may take 
$\mathcal X$ to be a nonsingular surface. 
Using the properness of the relative Quot functor, 
we complete the family of quotients across the central fiber:
$$0\to S\to \com^n \otimes \mathcal{O}_{\mathcal{X}}\to Q\to 0.$$
The extension may fail to be a quasi-stable quotient since
$Q$ may not be locally free at the nodes or the markings of the central fiber. This will be corrected by further blowups. 

We will first
treat the case when $S$ has rank $1$. 
As explained in Lemma $1.1.10$ of \cite {OMS}, 
the sheaf $S^{*}$ is reflexive over the nonsingular
 surface $\mathcal X$, hence locally free. Consider the image $T$ of the map 
$$(\com^n\otimes \mathcal{O}_{\mathcal{X}})^{*}\to S^{*}$$ which 
can be written as $$T=S^{*}\otimes I_Z$$ for a subscheme 
$Z\subset\mathcal X$. 
The quotient $Q$ will have torsion supported on $Z$.  
By the flatness of $Q$, the subscheme $Z$ is not supported
on any components of the central fiber.

We consider 
a point $\xi\in\mathcal{X}$ which is a node or marking of the 
central
fiber.
After restriction to an open set containing $\xi$, 
we may assume all components of $Z$ pass through $\xi$. 
After a
 sequence of blow-ups 
$$\mu:\widetilde {\mathcal X}\to \mathcal X,$$ 
we may take 
$$\widetilde Z=\mu^{-1}(Z) =\sum_i m_i E_i + \sum_j n_j D_j,$$
where the $E_i\subset \mathcal{X}$ are
the exceptional curves of $\mu$ and
the $D_j$ intersect the $E_i$  
away from the nodes and markings. 
Since we are only interested in constraining the behavior of
$\widetilde{Z}$ at the nodes or markings over $\xi$, 
the morphism $\mu$
can be achieved by repeatedly blowing-up only {nodes} or
{markings} of the fiber over $\xi$.

On the blow-up, the image of the map 
$$(\com^n\otimes \mathcal{O}_{\mathcal{X}})^{*}
 \to \mu^{*}S^{*}$$ factors 
though $\mu^{*} S^{*}(-\widetilde Z).$ 
Setting $$\widetilde S=\mu^{*} S(\sum_i m_i E_i)\hookrightarrow 
\com^n \otimes \mathcal{O}_{\widetilde{\mathcal{X}}},$$
 we obtain a flat family 
\begin{equation}\label{wjj45}
0\to \widetilde S\to \com^n \otimes \mathcal{O}_
{\widetilde{\mathcal{X}}}\to \widetilde Q\to 0
\end{equation}
on $\widetilde {\mathcal X}$  
where the quotient $\widetilde Q$ is locally free 
at the nodes or the markings of the (reduced) central fiber.

Unfortunately, the above blow-up process yields a family
$$\widetilde{\mathcal{X}}\rarr \Delta$$
with possible nonreduced components 
occuring in chains over nodes and markings of $\mathcal{X}_0$.
The multiple components can be removed by 
base change and normalization,
$${\mathcal{X}}' \rarr \widetilde{\mathcal{X}},$$
with the nodes and markings of $\mathcal{X}'_0$ mapping to
the nodes and markings of $\widetilde{\mathcal{X}}_0^{red}$.

The pull-back of 
\eqref{wjj45} to ${\mathcal{X}}'$
yields a quotient
$$\com^n \otimes \mathcal{O}_
{{\mathcal{X}}'}\to Q'\to 0.$$
The quotient $Q'$ is certainly locally free (and hence flat)
over the nodes and markings 
of ${\mathcal{X}}'_0$. 
The quotient $Q'$ may fail to be flat over finitely
many nonsingular points of ${\mathcal{X}}'_0$.
A flat limit
\begin{equation}\label{jj33}
0 \rarr S'' \rarr \com^n \otimes \mathcal{O}_
{{\mathcal{X}}'}\to  Q''\to 0
\end{equation}
can then be found after altering $Q'$ only at the latter
points. 
Since $Q''$ is locally free over the nodes and markings of
$\mathcal{X}'_0$, we have constructed a quasi-stable quotient.
However \eqref{jj33}
may fail to be stable because of possible unstable genus $0$ components 
in the central fiber. 

By the economical choice of blow-ups (occuring only at nodes
and markings over $\xi$),
all unstable genus 0 curves $P$ carry exactly 2 special points 
and
$${S}''|_P \stackrel{\sim}{=} \oh_P.$$
All such unstable components
are contracted by the line bundle 
$$\mathcal L=\omega_C(p_1+\ldots+p_m)^{d+1}\otimes \Lambda^r 
(S'')^{*}.$$ 
Indeed, $\mathcal L^k$ is $\pi'$-relatively\footnote{Here, ${\pi}': 
{\mathcal X}' \rarr \Delta$.}
basepoint free
for 
$k\geq 2$ and trivial over the unstable genus 0 curves.
As a consequence, $\mathcal L^k$ determines a morphism 
$$q:{\mathcal X}'\to \mathcal Y=\text {Proj}\left(\oplus_{m} L^{km}\right).$$ The push-forward 
$$0\to q_{*} S'' \to \com^n \otimes \mathcal{O}_{\mathcal{Y}}
 \to q_{*}  Q''\to 0$$ is stable.
 We have constructed the limit of the original
family \eqref{ld56} of stable quotients over $\Delta^{0}$.

The case when the subsheaf $S$ has arbitrary rank is similar. 
The cokernel $K$ of the map 
$$(\com^n \otimes \mathcal{O}_{\mathcal{X}})^{*}\to S^{*}$$ 
has support
 of dimension at most $1$. 
The initial Fitting ideal of $K$, denoted $\mathcal F_0(K)$, 
endows the support of $K$ with
a natural scheme structure. 
After a suitable composition of blow-ups 
 $$\mu:\widetilde {\mathcal X}\to \mathcal X,$$
 we may  take
$$\mathcal F_0(p^*K)=p^{*} \mathcal F_0(K)$$ 
to be divisorial with only exceptional components
passing through the nodes and markings of the central fiber.
Let $V$ be the exceptional part of $\mathcal F_0(p^*K)$
We set $$K'=\mu^{*}K\otimes \mathcal O_{V},$$ 
and define the sheaves $\widetilde K$ and $\widetilde S$ 
by the diagram
$$\xymatrix {(\com^n \otimes \mathcal{O}_{\widetilde{\mathcal{X}}})^{*} 
\ar[r]\ar@{=}[d] & \widetilde{S}^*\ar[r]\ar[d] & \widetilde{K}\ar[d] 
\\ (\com^n\otimes \mathcal{O}_{\mathcal{X}})^{*} 
\ar[r] & \mu^{*} S^{*}\ar[r]\ar[d]& \mu^{*}K\ar[d]\\ & K' \ar@{=}[r]& K'}.$$ 
The Fitting ideal 
$\mathcal F_0(\widetilde K)$ does
not vanish on exceptional divisors of $\mu$. 
Therefore, 
the quotient $$0\to \widetilde S \to 
\com^n\otimes \mathcal {O}_{\mathcal{\widetilde{X}}} 
\to \widetilde Q\to 0$$ is locally free at  the nodes or the markings
of the (reduced) central fiber. The remaining steps
exactly follow the 
 rank $1$ case.\qed

\section{Proofs of Theorems 3 and 4} \label{ccccc}
\subsection{Localization}
The idea 
is to proceed by localization 
 with respect to
the maximal torus $\mathbf{T}\subset {\mathbf{GL}_n(\com)}$
acting with diagonal weights $\mathsf{w}_1,\ldots,\mathsf{w}_n$. 
By the usual splitting principle, the torus calculation is enough
for the full equivariant result. Localization formulas for the virtual classes 
 of the moduli of  stable maps and
stable 
quotients{\footnote{An 
analogous localization computation for the virtual class of the Quot scheme of a fixed nonsingular
 curve was carried out in \cite {MO}. In particular, the fixed loci and their contributions were explicitly determined. The localization for the stable quotient  space is conceptually similar.}}
are both given by \cite{GP}.

Theorem 3 is a special case of Theorem 4, so will
consider only the latter. We will 
compare fixed point residues pushed-forward to 
$$\overline{Q}_{g,m}({\mathbb{G}}(1,\binom{n}{r}),d).$$

\subsection{$\mathbf{T}$-fixed loci for stable maps}
\label{fff1}
The $\mathbf{T}$-fixed loci of the moduli space
$\overline{M}_{g,m}(\mathbb{G}(r,n),d)$ are described in detail in \cite {GP}. We briefly recall here that the fixed loci are indexed by decorated graphs
$(\Gamma, \nu,\gamma,\epsilon, \delta, \mu)$ where
\begin{enumerate}
\item[(i)] $\Gamma=(V,E)$ such that $V$ is the vertex set and $E$ is the edge set (with no
self-edges),
\item[(ii)] $\nu:V \rarr \mathbb{G}(r,n)^{\mathbf{T}}$ is an assignment
of a $\mathbf{T}$-fixed point $\nu(v)$ to each element $v\in V$,
\item[(iii)] $\gamma: V \rarr \mathbb{Z}_{\geq 0}$ is a genus 
 assignment,
\item[(iv)] $\epsilon$ is an assignment to each $e\in E$ of a
 $\mathbf{T}$-invariant curve $\epsilon(e)$ of $\mathbb{G}(r,n)$ together with a
covering number $\delta(e) \geq 1$,
\item[(v)] $\mu$ is a distribution of the $m$ markings to the vertices $V$.
\end{enumerate}
The graph $\Gamma$ is required to be connected.
The two vertices incident to the edge $e\in E$ must
correspond via $\nu$ to the two $\mathbf{T}$-fixed points
incident to $\epsilon(e)$. The sum of $\gamma$ over $V$ together with
$h^1(\Gamma)$ must equal $g$.
The sum of $\delta$ over $E$ must equal $d$.

The $\mathbf T$-fixed locus corresponding to a given graph is, up to automorphisms, the product $$\prod_{v} \overline M_{\gamma(v), \,\text{val}(v)},$$ where $\text{val}(v)$ counts all incident edges and markings. The stable maps in the $\mathbf T$-fixed locus are easily described. If the condition 
$$2\gamma(v)-2 + \text{val}(v) >0$$
 holds{\footnote{Otherwise, the vertex is {\em degenerate}.}}, 
then the vertex $v$ corresponds to a collapsed curve
varying in $\overline{M}_{\gamma(v), \text{val}(v)}$. 
Moreover, each edge $e$ gives a degree $\delta(e)$ covering of the invariant curve $\epsilon(e),$ ramified only over the two torus fixed points. The stable map is obtained by gluing along the graph incidences.

\subsection{$\mathbf{T}$-fixed loci for stable quotients}
\label{fff2}

\subsubsection{The indexing set} The $\mathbf{T}$-fixed loci of 
 $\overline{Q}_{g,m}(\mathbb{G}(r,n),d)$
are similarly indexed by decorated graphs
$(\Gamma, \nu,\gamma, s,\epsilon, \delta, \mu)$ where 
\begin{enumerate}
\item[(i)] $\Gamma=(V,E)$ such that $V$ is the vertex set and $E$ is the edge set (no
self-edges are allowed),
\item[(ii)] $\nu:V \rarr \mathbb{G}(r,n)^{\mathbf{T}}$ is an assignment
of a $\mathbf{T}$-fixed point $\nu(v)$ to each element $v\in V$,
\item[(iii)] $\gamma: V \rarr \mathbb{Z}_{\geq 0}$ is a genus 
 assignment,
\item[(iv)] $s(v)=(s_1(v),\ldots,s_r(v))$ is an assignment of a
tuple of non-negative integers with $\mathbf{s}(v)= \sum_{i=1}^r s_i(v)$
together with an inclusion  $$\iota_s: \{1,\ldots,r\} \rarr \{1,\ldots,n\},$$
\item[(v)] $\epsilon$ is an assignment to each $e\in E$ of a
 $\mathbf{T}$-invariant curve $\epsilon(e)$ of $\mathbb{G}(r,n)$ together with 
a covering number $\delta(e) \geq 1$,
\item[(vi)] $\mu$ is a distribution of the markings to the vertices $V$.
\end{enumerate}

The graph $\Gamma$ is required to be connected.
The two vertices incident to the edge $e\in E$ must
correspond via $\nu$ to the two $\mathbf{T}$-fixed points
incident to $\epsilon(e)$. The sum of $\gamma$ over $V$ together with
$h^1(\Gamma)$ must equal $g$.
The assignment $s$ determines the splitting type of the
subsheaf over the vertex $v$. The inclusion $\iota_s$
determines $r$ trivial factors of $\com^n \otimes \oh_C$ in which the subsheaf $S$ injects. 
The inclusion $\iota_s$ must be compatible with $\nu(v)$.
The sum of $\mathbf{s}(v)$ over $V$ together with 
the sum of $\delta$ over $E$ must equal $d$.

Unless $v$ is a degenerate vertex satisfying
$$\gamma(v)=0, \ \ \text{val}(v)=2, \ \ 
\mathbf{s}(v)=0,$$
the stability
condition 
$$2\gamma(v)-2 + \text{val}(v) + \epsilon\cdot \mathbf{s}(v) >0$$
holds for every strictly positive $\epsilon\in \mathbb{Q}$.
The valence of $v$, as before, counts all incident edges and markings.

\subsubsection{Mixed pointed spaces}

The $\mathbf T$-fixed loci for the stable quotients are described in terms of mixed pointed spaces. 
Let $s=(s_1,\ldots, s_r)$ be a tuple of non-negative integers.
Let $\overline{M}_{g,A|s}$ be the moduli space of 
genus $g$ curves with markings
$$
\{p_1,\ldots,p_A\} \
\cup\
\bigcup_{j=1}^r \{\widehat{p}_{j1},\ldots,\widehat{p}_{js_j}\}\ 
\in C^{ns}\subset C$$
satisfying the conditions
\begin{enumerate}
\item[(i)]
the points $p_i$ are distinct,
\item[(ii)]
the points $\widehat{p}_{jk}$ are distinct from the points $p_i$,
\end{enumerate}
with stability given by the ampleness
of $$\omega_C(\sum_{i=1}^A p_i + 
\epsilon \sum_{j,k} \widehat{p}_{jk})$$
for every strictly positive $\epsilon \in {\mathbb{Q}}$.
The conditions allow the points $\widehat{p}_{jk}$ and $\widehat{p}_{j'k'}$ to coincide.
If
 $$\mathbf{s} = \sum_{j=1}^r s_j,$$
then 
$\overline{M}_{g,A|s}= \overline{M}_{g,A|\mathbf{s}}$ 
defined in Section \ref{xcx}.

\subsubsection {Torus fixed quotients} 

Fix a decorated graph $(\Gamma, \nu, \gamma, s, \epsilon, \delta, \mu)$ indexing a $\mathbf{T}$-fixed locus of the 
moduli space $\overline Q_{g,m}(\mathbb G(r,n), d)$. 
The corresponding $\mathbf{T}$-fixed locus is, up to automorphisms, 
the product of mixed pointed spaces 
$$\prod_{v\in V}\overline M_{\gamma(v), \,\text{val}(v) | s(v)}.$$

The corresponding $\mathbf{T}$-fixed stable
quotients can be described explicitly. 
For each vertex $v$ of the graph, pick 
a curve $C_v$ in the mixed moduli space with 
markings $$\{p_1, \ldots, p_{\text{val}(v)}\} \cup \bigcup_{j=1}^{r} \{\widehat p_{j1}, \ldots, \widehat p_{js_j(v)}\}.$$ 
For each edge $e$, pick a rational curve $C_e$.
A pointed curve $C$ is obtained by gluing the curves $C_v$  and $C_e$ 
via the graph incidences, and distributing the markings on the 
domain via the assignment $\mu$. 
\begin {itemize}
\item [(i)] On the component $C_v$, the stable quotient is given by the exact sequence 
$$0\to \oplus_{j=1}^{r} 
\oh_{C_v}(-\sum_{k=1}^{s_j(v)}\widehat p_{jk})
\to \mathbb C^n\otimes \oh_{C_v} \to Q\to 0.$$ 
The first inclusion is the composition of 
$$\oplus_{j=1}^{r} \oh_{C_v}(-\sum_{k=1}^{s_j(v)}\widehat p_{jk})
\to \mathbb C^r\otimes \oh_{C_v}$$ with the 
$r$-plane $\mathbb C^r\otimes \oh_{C_v} \to \mathbb C^n\otimes \oh_{C_v}$ 
determined by $i_s$. 
\item [(ii)] For each edge $e$, consider the degree $\delta_e$ covering of the $\mathbf T$-invariant curve $\epsilon(e)$ in the Grassmannian $\mathbb G(r,n)$: $$f_e: C_e\to \epsilon(e)$$ ramified over the two torus fixed points. The stable quotient is obtained pulling back the tautological sequence of $\mathbb G(r,n)$ to $C_e$.
\end {itemize}
The gluing of stable quotients on different components is made possible by the compatibility of $i_s$ and $\nu$. 

\subsection{Contributions}
\label{fff3}
\subsubsection{Vertices for stable maps}
\label{fff31} 
Consider the case of a nondegenerate vertex $v$ occurring in
a graph for stable maps.
The vertex corresponds to the moduli space{\footnote{As usual we order all issues
and quotient by the overcounting.}}
$\overline{M}_{\gamma(v),\text{val}(v)}$. The vertex contribution is computed in \cite {GP} 
\begin{equation}\label{gtte}
\mathsf{Cont}(v)= \frac{{\mathsf{e}(\mathbb{E}^* \otimes T_{\nu(v)})}}{\mathsf{e}(T_{\nu(v)})} \frac{1}
{\prod_{e} \frac{w(e)}{\delta(e)}-\psi_e}.
\end{equation}
Here, ${\mathsf{e}}$ denotes the Euler class
$T_{\nu(v)}$ is the $\mathbf{T}$-representation on the tangent
space of $\mathbb G(r,n)$ at $\nu(v)$, and
$$\mathbb{E} \rarr \overline{M}_{\gamma(v),\text{val}(v)},$$
is the Hodge bundle.
Finally, the product in the denominator is over all half-edges
incident to $v$. The factor $w(e)$ denotes the $\mathbf{T}$-weight
of the tangent representation along the corresponding
$\mathbf{T}$-fixed edge, and $\psi_e$ denotes the cotangent
line at the corresponding marking of $\overline{M}_{\gamma(v),\text{val}(v)}$.

\subsubsection{Vertices for stable quotients} \label{fff32}
Next, let $v$ be a nondegenerate
vertex occurring in a graph for stable quotients.
For simplicity, assume  
$$\iota_s(j)=j, \ \ 1\leq j \leq r.$$
The vertex corresponds to the moduli space{\footnote{Again, we order all
issues and quotient by the overcounting.}}
$\overline{M}_{\gamma(v), \text{val}(v)|s(v)}$ where the subsheaf is given by 
$$0 \rarr S = \oplus_{j=1}^r \oh_C(-\sum_{k=1}^{s_j(v)}\widehat{p}_{jk}) 
\stackrel{\iota_s}{\rarr} \com^n \otimes \oh_C\rarr Q \rarr 0.$$
The vertex contribution, determined by the {\it moving} part of $RHom(S,Q)$, is
\begin{equation} \label{frt}
\mathsf{Cont}(v)= \frac{{\mathsf{e}(\text{Ext}^1(S,Q)^{\text {m}})}}{\mathsf{e}(\text{Ext}^0(S,Q)^{\text{m}})} \frac{1}
{\prod_{e} \frac{w(e)}{\delta(e)}-\psi_e} \ \ .
\end{equation} Since the $\text{Ext}$ spaces are not separately of constant rank, a better
form is needed for \eqref{frt}. 

Let $S_i\subset C$ be the divisor associated to points
corresponding to $s_i$.
By the results of Section \ref{txtq}, we see that \eqref{frt} is equivalent to 
\begin{eqnarray*}
\mathsf{Cont}(v)&=& 
\frac{{\mathsf{e}(\mathbb{E}^* \otimes T_{\nu(v)})}}{\mathsf{e}(T_{\nu(v)})} \frac{1}
{\prod_{e} \frac{w(e)}{\delta(e)}-\psi_e} \cdot \\
& & \frac{1}{\prod_{i\neq j} \mathsf{e}(H^0(\oh_C(S_i)|_{S_j})\otimes [\mathsf{w}_j-\mathsf{w}_i])} \cdot \\
& & 
\frac{1}{\prod_{i, j^*} \mathsf{e}(H^0(\oh_C(S_i)|_{S_i})\otimes
[\mathsf{w}_{j^*}-\mathsf{w}_i]))} \  \
\end{eqnarray*}
where the products in the last factors satisfy the following conditions
$$1\leq i \leq r, \ \ 1\leq j \leq r, \ \ r+1 \leq j^* \leq n\ \ .$$ The brackets $\left[\,\cdot\,\right]$ in the above expression denote the trivial line bundle with the specified weights.

While the vertex contributions for stable maps and stable quotients appear
quite different, the genus dependent part of the integrand
involving the Hodge bundle is the {\em same}. The differences all
involve the local geometry of the points.

\subsection{Matching}
Under the map to
$\overline{Q}_{g,m}({\mathbb{G}}(1,\binom{n}{r}),d)$,
the stable map side has
many genus 0 tails which are collapsed. Similarly, the
stable quotient side has many splitting types of
the subbundle $S$ which are collapsed. 
The differences in the localization formulas occur
entirely in the nondegenerate vertices. 
For noncollapsed edges (not occurring in
genus 0 tails of the stable map space) and
noncollapsed degenerate vertices of valence 2, the edge 
and vertex contributions
exactly coincide.

The
crucial step in the argument is to notice
Theorems 3 and 4 are a consequence of a universal
calculation in a moduli space of pointed curves.
In fact, the universal calculation is genus independent since the
genus dependent integrand factors match.

In genus 0, a geometric argument can be given.
Since
$$\overline{M}_{0,m}(\mathbb{G}(r,n),d)  \ \ \text{and} \ \
\overline{Q}_{0,m}(\mathbb{G}(r,n),d)$$
are nonsingular of expected dimension,
the virtual class in both cases is the usual fundamental class.
Moreover, since the moduli space are
irreducible{\footnote{See \cite{BP, Jes}.}} and birational,
Theorem 4 in the form
\begin{equation}\label{unn7}
c_*\iota_{M*}\Big(
[\overline{M}_{0,m}({\mathbb{G}}(r,n),d)]^{vir}\Big)= \\
  \iota_{Q*}\Big(
[\overline{Q}_{0,m}({\mathbb{G}}(r,n),d)]^{vir}\Big).
\end{equation}
is  trivial. The image of
$c \circ \iota_M$ simply coincides with the image of $\iota_Q$.

 Let $\xi\in \mathbb{G}(r,n)^{\mathbf{T}}$ be a fixed point
and let 
$$\iota(\xi) \in {\mathbb{G}}(1, \binom{n}{r})$$
be the image in the Pl\"ucker embedding.
Consider the $\mathbf{T}$-fixed locus of 
$\overline{Q}_{0,m}(\mathbb{G}(1, \binom{n}{r}),d)$
which corresponds to a single vertex over $\iota(\xi)$
with no edges.
By the discussion of Section \ref{fff2}, the associated
$\mathbf{T}$-fixed locus is
$\overline{M}_{0,m|d}/{{\mathbb{S}}_d}$.

Equality \eqref{unn7} implies a matching after $\mathbf{T}$-equivariant
localization. In particular, there is a matching obtained
for $\mathbf{T}$-equivariant residues on the locus 
$\overline{M}_{0,m|d}/{{\mathbb{S}}_d}$ over $\iota(\xi)$.
The residue of
$$c_*\iota_{M*}\Big(
[\overline{M}_{0,m}({\mathbb{G}}(r,n),d)]^{vir}\Big)$$
is a graph sum over all $\mathbf{T}$-fixed
point loci of $\overline{M}_{0,m}({\mathbb{G}}(r,n),d)$
which contract to $\overline{M}_{0,m|d}/{{\mathbb{S}}_d}$
over $\iota(\xi)$.
Similarly,
the residue of
$$\iota_{Q*}\Big(
[\overline{Q}_{0,m}({\mathbb{G}}(r,n),d)]^{vir}\Big)$$
is a splitting sum over all $\mathbf{T}$-fixed
point loci of $\overline{Q}_{0,m}({\mathbb{G}}(r,n),d)$
which collapse to $\overline{M}_{0,m|d}/{{\mathbb{S}}_d}$
over $\iota(\xi)$.

While the equality of residues holds on
$\overline{M}_{0,m|d}/{{\mathbb{S}}_d}$, we can canonically lift
both sides to symmetric polynomials in $\widehat{\psi}_j$
and $D_J$ on
$\overline{M}_{0,m|d}$. On the left side, we use the contribution
formulas of Section \ref{fff31} and Lemma \ref{nbv} to obtain
$$L_{d,\xi}(\widehat{\psi}_j, D_J).$$
On the right side we use the contribution formulas of Section \ref{fff32}
to obtain
$$R_{d,\xi}(\widehat{\psi}_j, D_J).$$
The symmetry of $L_{d,\xi}$ and $R_{d,\xi}$
is with respect the points $\widehat{p}_1,\ldots, \widehat{p}_d$.
We may take the symmetric polynomials $L_{d,\xi}$ and 
$R_{d,\xi}$ to be in the canonical form of Section \ref{can77}.
The polynomials $L^C_{d,\xi}$ and $R^C_{d,\xi}$
are independent of $m$.

We know the push-forwards of $L^C_{d,\xi}$ and $R^C_{d,\xi}$
to $\overline{M}_{0,m|d}/{\mathbb{S}_d}$ are equal by the
matching of residues in genus 0.
By the independence result of Section \ref{indd},
we  conclude the much stronger equality
\begin{equation}\label{strr}
L^C_{d,\xi} =R^C_{d,\xi}
\end{equation}
as abstract {\em polynomials}.

The  equality \eqref{strr} as polynomials
 implies Theorems 3 and 4
for arbitrary genus since the cotangent calculus is
genus independent. \qed

\subsection{Independence} \label{indd}
\subsubsection{Polynomials} \label{nn59}
Consider variables $\widehat{\psi}_1, \ldots, \widehat{\psi}_d$
and 
$$\{\ D_J\ |\ J \subset \{ 1,\ldots, d\}, \ |J| \geq 2 \ \}$$
for fixed $d\geq 0$.
Given a polynomial 
$P(\widehat{\psi}_j, D_J)$, we obtain a canonical form $P^C$
in the sense of Section \ref{can77}.

We view $P^C$ in two different ways.
First, 
$P^C$
yields a class 
\begin{equation}\label{pw342}
P^C = P(\widehat{\psi}_j, D_J) \in 
A^*(\overline{M}_{0,m|d},\mathbb{Q})
\end{equation}
for every $m$. We will always take $m\geq 3$ 
to avoid unstable cases.
Second, $P^C$ is
an abstract polynomial.
If $P^C$ always vanishes in the
first sense \eqref{pw342}, then we will show that $P^C$
vanishes as an abstract polynomial.

If $P(\widehat{\psi}_j, D_J)$ is symmetric with
respect to the $\mathbb{S}_d$-action on the variables,
then $P^C$ is also symmetric. The class \eqref{pw342}
lies in the $\mathbb{S}_d$-invariant sector,
\begin{equation}\label{pw3421}
P^C \in 
A^*(\overline{M}_{0,m|d},\mathbb{Q})^{\mathbb{S}_d} =
A^*(\overline{M}_{0,m|d}/\mathbb{S}_d,\mathbb{Q}).
\end{equation}
Hence, for symmetric $P$, 
only the vanishing
in  $A^*(\overline{M}_{0,m|d}/\mathbb{S}_d,\mathbb{Q})$
will be required to show $P^C$ vanishes as
an abstract polynomial.

\subsubsection{Partitions}
Fix a codimension $k$. 
Let 
$$\mathcal{P}=(\mathcal{P}_1, \ldots, \mathcal{P}_\ell)$$
 be a set partition of $\{1,\ldots, d\}$
with $\ell \geq d-k$ nonempty parts,
$$\cup_{i=1}^\ell {\mathcal{P}}_i = \{1,\ldots,d\}.$$
The parts $\mathcal{P}_i$ are ordered by lexicographical
ordering.{\footnote{The choice of
ordering will not play a role
in the argument.}} 
Let 
$$\tau=(t_1,\ldots, t_\ell), \ \ \ \sum_{i=1}^\ell t_i = k-d+\ell$$
be an ordered  partition of $k-d+\ell$. 
The parts $t_i$ {\em are} allowed to be 0.

Let ${\mathsf{P}}[d,k]$ be the
set of all such pairs $[\mathcal{P},\tau]$.
Let $\mathbf{V}[d,k]$
be a $\mathbb{Q}$-vector space with basis 
given by the elements of
${\mathsf{P}}[d,k]$.
To each pair
$[\mathcal{P},\tau] \in \mathsf{P}[d,k]$ and integer $m\geq 3$, we
 associate 
the class
$$X_m[\mathcal{P},\tau]=\widehat{\psi}_{\mathcal{P}_1}^{\hpo t_1}
 \ldots \widehat{\psi}_{\mathcal{P}_\ell}^{\hpo t_\ell} 
\cdot D_{\mathcal{P}_1} \cdots D_{\mathcal{P}_\ell}
\ \  
\in A^{k}
(\overline{M}_{0,m|d})\ .$$

\subsubsection{Pairing}

Let $\mathbf{W}_m$ the  $\mathbb{Q}$-vector space 
with basis given by the symbols
$[\mathcal{S},\mu]$
where $\mathcal{S}\subset \overline{M}_{0,m|d}$ is a stratum
of dimension $s\geq k$ and
$\mu$ is a monomial in the variables 
$$\psi_1,\ldots, \psi_m$$
of degree $s-k$.

There is a canonical Poincar\'e pairing
$$I: 
{\mathbf{V}}[d,k] \times
\mathbf{W}_{m}
\rarr \mathbb{Q}$$
defined on the bases by
$$I([\mathcal{P},\tau],[\mathcal{S},\mu]) = 
\int_{\mathcal{S}}
X_m[\mathcal{P},\tau]
\cup \mu(\psi_{1},\ldots,\psi_{m}) 
.$$

\begin{Lemma} 
\label{lm23}
A vector $v\in \mathbf{V}[d,k]$
is 0 if and only if $v$ is in the null space of all
the pairings $I$ for $m\geq 3$.
\end{Lemma}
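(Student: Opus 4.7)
The ``only if'' direction is immediate from linearity of $I$. For the converse, given nonzero $v=\sum c_{[\mathcal P,\tau]}[\mathcal P,\tau]\in \mathbf V[d,k]$, I will exhibit pairings that detect it. Fix $m\geq \max(3,\,2k-d+3)$ once and for all, so that for every partition $\mathcal P'$ of $\{1,\ldots,d\}$ with $\ell'\geq d-k$ parts, the closed substratum
$$\mathcal S_{\mathcal P'} \;:=\; \bigcap_{i:\,|\mathcal P'_i|\geq 2} D_{\mathcal P'_i}\;\subset\; \overline M_{0,m|d}$$
has dimension $s'=m+\ell'-3\geq k$. Identifying each clumped group of soft markings with a single representative gives $\mathcal S_{\mathcal P'}\cong \overline M_{0,m|\ell'}$. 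For each $[\mathcal P',\tau']\in\mathsf P[d,k]$ I will choose a monomial $\mu_{\mathcal P',\tau'}$ in $\psi_1,\ldots,\psi_m$ of degree $s'-k$ and show that the square matrix
$$M_{[\mathcal P,\tau],[\mathcal P',\tau']}\;:=\;I\bigl([\mathcal P,\tau],[\mathcal S_{\mathcal P'},\mu_{\mathcal P',\tau'}]\bigr)$$
is nonsingular.

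\textbf{Triangularity via refinement.} Order $\mathsf P[d,k]$ by the number of parts of the underlying partition, putting finer partitions first, and break ties on $\tau$ arbitrarily. I claim $M$ is upper block-triangular in $\mathcal P$. If $\mathcal P$ does not refine $\mathcal P'$, some $\mathcal P_i$ meets multiple parts of $\mathcal P'$; then by the excess intersection formula \eqref{p2h}, the restriction $D_{\mathcal P_i}|_{\mathcal S_{\mathcal P'}}$ is supported on the proper substratum $\mathcal S_{\mathcal P\vee \mathcal P'}\subsetneq \mathcal S_{\mathcal P'}$. Taking $\mu_{\mathcal P',\tau'}$ to be a generic degree $s'-k$ monomial in the $\psi_i$ at hard markings (away from boundary divisors in $\mathcal S_{\mathcal P'}$), a dimension count forces the pairing to vanish. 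In particular, all entries with $\mathcal P$ strictly coarser than $\mathcal P'$ and all entries with $\mathcal P,\mathcal P'$ incomparable are zero.

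\textbf{Diagonal block.} For $\mathcal P=\mathcal P'$, the iterated application of \eqref{p2h} yields
$$X_m[\mathcal P,\tau]\bigl|_{\mathcal S_{\mathcal P}}\;=\;(-1)^{d-\ell}\prod_{j=1}^{\ell}\widehat\psi_j^{\,t_j+|\mathcal P_j|-1},$$
so the diagonal block consists of integrals on $\overline M_{0,m|\ell}$ of these $\widehat\psi$-monomials against $\mu_{\mathcal P,\tau'}$. Using the contraction $\tau:\overline M_{0,m+\ell}\to \overline M_{0,m|\ell}$ and the pullback identity \eqref{ppww}, these reduce to explicit genus-zero $\psi$-integrals on $\overline M_{0,m+\ell}$, with values given by multinomial coefficients in the style of the formula
$$\int_{\overline M_{0,2|d}}\psi_1^{x_1}\psi_2^{x_2}\;=\;\binom{d-1}{x_1,x_2}$$
computed earlier. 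Choosing $\mu_{\mathcal P,\tau'}$ to be the monomial dualizing the multi-index $\tau'$ under this pairing (exploiting the nondegeneracy of the Witten intersection matrix in $\widehat\psi$ vs.\ $\psi$ exponents for sufficiently large $m$) makes the diagonal block invertible. Combined with the upper-triangular vanishing, $M$ is nonsingular, so if $v\neq 0$ then some row of $M\cdot v$ is nonzero, i.e., some $I(v,[\mathcal S_{\mathcal P'},\mu_{\mathcal P',\tau'}])\neq 0$.

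\textbf{Main obstacle.} The delicate verification is to choose the $\mu_{\mathcal P',\tau'}$ uniformly so that both the off-diagonal vanishing (Case $\mathcal P\not\preceq \mathcal P'$) and the diagonal nondegeneracy hold simultaneously. The former relies on the fact that the restricted class lives on deeper substrata and thus cannot be paired with a ``generic'' top-degree $\psi$-monomial; the latter is a matter of inverting an explicit matrix of genus-zero integrals on Hassett spaces. The off-diagonal vanishing in Case B is the trickiest combinatorial point, since one must ensure the monomial $\mu_{\mathcal P',\tau'}$ picked to detect $\tau'$ in the diagonal block still avoids detecting any of the lower-partition classes that live on strictly smaller strata; this can be arranged by taking $m$ further enlarged so that enough independent $\psi_i$-variables are available to isolate each basis vector.
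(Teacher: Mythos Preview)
Your off-diagonal vanishing claim is false, and this breaks the argument. Take $d=3$, $k=1$, and the incomparable partitions $\mathcal{P}=\{1,2\}\{3\}$ and $\mathcal{P}'=\{1,3\}\{2\}$, both with $\tau=(0,0)$. Then $X_m[\mathcal{P},\tau]=D_{1,2}$ and $\mathcal{S}_{\mathcal{P}'}=D_{1,3}\cong\overline{M}_{0,m|2}$, so for any monomial $\mu$ of degree $m-2$ in $\psi_1,\dots,\psi_m$,
\[
I\bigl([\mathcal{P},\tau],[\mathcal{S}_{\mathcal{P}'},\mu]\bigr)=\int_{D_{1,3}}D_{1,2}\cdot\mu=\int_{D_{1,2,3}}\mu=\int_{\overline{M}_{0,m|1}}\mu,
\]
which for $\mu=\psi_1^{m-2}$ equals $\int_{\overline{M}_{0,m+1}}\psi_1^{m-2}=1\neq 0$. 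The ``dimension count'' you invoke does not apply: the restricted class still has Chow degree $k$, and living on a deeper diagonal stratum does not make its pairing with a $\psi$-monomial vanish. So neither the incomparable case nor the strictly-coarser case is forced to zero by your choice of strata, and the matrix $M$ is not block-triangular in the way you need. The ``generic monomial away from boundary divisors'' formulation is also not meaningful here, since in genus $0$ the $\psi_i$ themselves are boundary combinations.

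The paper avoids this by choosing very different strata. Instead of the diagonal loci $\mathcal{S}_{\mathcal{P}'}\cong\overline{M}_{0,m|\ell'}$, it takes $\mathcal{S}'$ to be a \emph{chain} of $\ell'+2$ rational curves $R_0-R_1-\cdots-R_{\ell'}-R_{\ell'+1}$, with the $d$ soft markings distributed so that $R_i'$ carries exactly the subset $\mathcal{P}'_i$, and with $m'=4+k-d+2\ell'$ hard markings distributed in a prescribed pattern (together with a specific $\psi$-monomial $\mu$ of length $\ell'$). On such a chain, the diagonal $D_{\mathcal{P}_j}$ can only restrict nontrivially if all of $\mathcal{P}_j$ lies on a single $R_i'$, and every $R_i'$ must receive at least one such diagonal for the integral to survive. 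This forces $\ell\geq\ell'$; when $\ell=\ell'$ it forces $\mathcal{P}=\mathcal{P}'$; and then a direct integral on each $R_i'$ forces $\tau=\tau'$. That is genuine triangularity with nonzero diagonal, and it requires varying $m$ with $\ell'$ rather than fixing it. Your diagonal strata are simply too large to separate partitions of the same length, and the remaining vagueness in your choice of $\mu_{\mathcal{P}',\tau'}$ (which you yourself flag in the ``Main obstacle'' paragraph) is a symptom of this underlying problem rather than a detail to be filled in later.
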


\bpf 
To $[\mathcal{P},\tau]\in \mathsf{P}[d,k]$, 
we associate  $Y_m[\mathcal{P},\tau]\in \mathbf{W}_m$, 
where
$$m= 4+ k-d+2\ell$$
and
$\ell$ is the length of $\mathcal{P}$, by the following
construction.

Let $\mathcal{S}\subset \overline{M}_{0,m|d}$ be the stratum consisting
of a chain of $\ell+2$ rational curves attached head to
tail,
$$R_0-R_1-R_2 -\ldots - R_{\ell}-R_{\ell+1},$$
with the $m$ markings $p_1,\ldots, p_m$ distributed by the rules:
\begin{enumerate}
\item[(i)] $R_0$ and $R_{\ell+1}$ each carry exactly 2 markings,
\item[(ii)] For $1 \leq i \leq {\ell},$ $R_i$ carries $t_i+1$ markings,
\item[(iii)] the marking are distributed in order from left to right.
\end{enumerate}
We write $r_i$ for the minimal label of the markings on $R_i$. 
The $d$ markings $\widehat{p}_1,\ldots, \widehat{p}_d$ are
distributed by the rules
 \begin{enumerate}
\item[(iv)] $R_0$ and $R_{\ell+1}$ each carry 0 markings
\item[(v)] For $1 \leq i \leq {\ell},$ $R_i$ carries the markings corresponding to $\mathcal{P}_i$.
\end{enumerate}
The dimension of $\mathcal{S}$ is easily calculated, 
\begin{eqnarray*}
\text{dim}(\mathcal{S})& =& \text{dim}(\overline{M}_{0,m|d})- \ell-1 \\
& = & 4+k-d+2\ell+d-3-\ell-1 \\
& =& \ell+k.
\end{eqnarray*}
The associated element of $\mathbf{W}_m$ is defined by
$$Y_m[\mathcal{P},\tau] = [\mathcal{S},\psi_{r_1} \cdots
 \psi_{r_{\ell}}] .$$

The next step is to find when the pairing 
\begin{equation}\label{gt12}
I([\mathcal{P},\tau], Y_{m'}[\mathcal{P}',\tau'])
\end{equation}
is nontrivial for 
$$[\mathcal{P},\tau], \ [\mathcal{P}',\tau']\in \mathsf{P}[d,k],$$
with $m'=4+k-d+2\ell'$.
By definition, the pairing \eqref{gt12} equals
\begin{multline}\label{nt56}
\int_{\mathcal{S}'}
X_{m'}[\mathcal{P},\tau]
\cup
\psi_{r_1'} \cdots
 \psi_{r'_{\ell'}} =\\ 
\int_{\mathcal{S}'}\psi_{r_1'} \cdots
 \psi_{r'_{\ell'}}\cdot
\widehat{\psi}_{\mathcal{P}_1}^{\hpo t_1}
 \ldots \widehat{\psi}_{\mathcal{P}_\ell}^{\hpo t_\ell} 
\cdot D_{\mathcal{P}_1} \cdots D_{\mathcal{P}_\ell}
\end{multline}
The integral \eqref{nt56} is  calculated by distributing
the diagonal points corresponding to $D_{\mathcal{P}_j}$
to the components $R'_i$ of curves in $\mathcal{S}'$
in all possible ways.
Note that unless there is at least one diagonal $D_{\mathcal{P}_j}$ distributed to
{\em each} $R'_i$ for $1\leq i \leq \ell'$, the 
contribution to the integral \eqref{nt56} vanishes.
Hence, nonvanishing implies $\ell \geq \ell'$.

If $\ell = \ell'$, then the distribution rule (v)
implies
the set theoretic intersection
$$\mathcal{S}' \cap  D_{\mathcal{P}_1} \cap \cdots \cap  D_{\mathcal{P}_\ell}$$
is empty unless $\mathcal{P}=\mathcal{P}'$.
If $\mathcal{P}=\mathcal{P}'$, the only nonvanishing
diagonal distribution is given by sending
$D_{\mathcal{P}_i}$ to $R_i'$.
The integral \eqref{nt56} is easily seen to be nonzero
then if and only if $\tau=\tau'$. Indeed, the contribution of $R'_i$ to the integral is $$\int_{\overline M_{0, t'_i+3\large {|} |{\mathcal P_i}|}} \psi_{r'_i}\, \widehat \psi_{\mathcal P_i}^{t_i}\cdot D_{\mathcal P_i}=\int_{\overline M_{0, t'_i+4}} \psi_{r'_i}\, \psi^{t_i}_{t'_i+4}=\begin {cases} t_i+1 &\text{ if } t_i= t'_i\\ 0 & \text {otherwise}\end {cases}.$$ 

The linear functions on $\mathbf{V}[d,k]$
determined by $I(\cdot, Y_{m'}[\mathcal{P}',\tau'])$
are block lower-triangular with respect to the
partial ordering by the length of the set partition.
Moreover, the diagonal blocks are themselves diagonal 
with nonzero entries.
\epf

Following the notation of Section \ref{nn59},
Lemma \ref{lm23} proves that if 
$$P^C \in A^*(\overline{M}_{0,m|d}, \mathbb{Q})$$
vanishes for all $m\geq 3$, then
$P^C$ vanishes as an abstract polynomial.
The proofs of Theorems 3 and 4 are therefore
complete.

\section{Tautological relations}\label{ttaa6}

\subsection{Tautological classes}
Let $g \geq 2$.
The tautological ring of the moduli space of curves
$$R^*(M_g) \subset A^*(M_g,\mathbb{Q})$$
is generated by the classes
$$\kappa_i = \epsilon_*(\psi_1^{i+1}), \ \ \ \ M_{g,1} \stackrel{\epsilon}{\rarr}M_{g}.$$
Here, $\kappa_0=2g-2$ is a multiple of the unit class.
A conjectural description of $R^*(M_g)$ is presented in 
\cite{Faber}.
The basic vanishing result, 
$$R^{i}(M_g)= 0$$
for $i>g-2$, 
has been proven by Looijenga \cite{L}.

\subsection{Relations}
Let $g \geq 2$ and $d\geq 0$.
The moduli space 
$$M_{g,0|d}\stackrel{\epsilon}{\rarr} M_g $$
is simply the $d$-fold product of the
universal curve over $M_g$. 
Given an element
$$[C, \widehat{p}_1,
\ldots,\widehat{p}_d] \in {M}_{g,0|d}\ , $$
there is a canonically associated stable quotient
\begin{equation}\label{jwq2}
0 \rarr \oh_C(-\sum_{j=1}^d \widehat{p}_j) \rarr \oh_C \rarr Q \rarr 0.
\end{equation}
Consider the universal curve
$$\pi: U \rarr {M}_{g,0|d}$$
with universal quotient sequence
$$0 \rarr S_U \rarr \oh_U \rarr Q_U \rarr 0$$
obtained from \eqref{jwq2}.
Let
$$\mathbb{F}_d= -R\pi_*(S^*_U) \in K({M}_{g,0|d})$$
be the class in $K$-theory.
For example,
$$\mathbb{F}_0 = \mathbb{E}^*-\com$$
is the dual of the Hodge bundle minus a rank 1 trivial
bundle. 

By Riemann-Roch,
the rank of 
$\mathbb{F}_d$ is 
$${r}_g(d)=g-d-1.$$ 
However, $\mathbb{F}_d$ is not always represented by a bundle. 
By the derivation of Section \ref{txtq},
\begin{equation}\label{laloo1}
\mathbb{F}_d = \mathbb{E}^*- \mathbb{B}_d - \com,
\end{equation} where 
$\mathbb{B}_d$ has fiber
$H^0(C,\oh_C(\sum_{j=1}^d \widehat{p}_j)|_{\sum_{j=1}^d \widehat{p}_j})$
over $[C, \widehat{p}_1,\ldots,\widehat{p}_d].$
Alternatively, $\mathbb{B}_d$ is the $\epsilon$-relative
tangent bundle.

\begin{Theorem}
For every integer $k > 0$, 
$$\epsilon_* \left( c_{r_g(d)+2k}(\mathbb{F}_d) \right) = 0  \  \in R^*(M_g).$$
\end{Theorem}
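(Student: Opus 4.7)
The plan is to apply $\mathbf{T}$-equivariant virtual localization on the moduli of stable quotients $\overline{Q}_{g,0}(\mathbb{P}^{n-1},d)$, for $n$ chosen sufficiently large, and to exploit the polynomiality of the equivariant push-forward under $\nu$ to $\overline{M}_g$. Let the maximal torus $\mathbf{T} \subset \mathbf{GL}_n(\mathbb{C})$ act on $\mathbb{C}^n$ with diagonal weights $\mathsf{w}_1,\ldots,\mathsf{w}_n$. Since $\mathbf{T}$ acts trivially on $\overline{M}_g$, the class
$$\nu_*\bigl[\overline{Q}_{g,0}(\mathbb{P}^{n-1},d)\bigr]^{vir} \in A^{*}(\overline{M}_g)\otimes \mathbb{Q}[\mathsf{w}_1,\ldots,\mathsf{w}_n]$$
is a genuine polynomial in the $\mathsf{w}_i$ (no denominators). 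Choosing $n$ so that the virtual dimension exceeds $3g-3$, this polynomial class is forced to vanish identically, providing a polynomial relation among the fixed-locus contributions.

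By the Graber--Pandharipande formula and the description in Section~\ref{fff2}, the distinguished fixed component is the single-vertex graph at $\xi_1=[e_1]$ with splitting datum $s=(d)$ and no edges. This fixed locus is identified with $\overline{M}_{g,0|d}/\mathbb{S}_d$, and on it the universal subsheaf is $S_U=\oh_C(-\sum_j \widehat{p}_j)$ sitting in the first coordinate factor. Thus $R\pi_* S_U^{*}=-\mathbb{F}_d$ in $K$-theory, and the moving part of $R\mathrm{Hom}(S,Q)$ yields a virtual normal bundle with reciprocal Euler class
$$\prod_{i=2}^{n}e\bigl(\mathbb{F}_d\otimes[\mathsf{w}_i-\mathsf{w}_1]\bigr) = \prod_{i=2}^{n}\sum_{j\geq 0}c_j(\mathbb{F}_d)\,(\mathsf{w}_i-\mathsf{w}_1)^{r_g(d)-j}.$$
After push-forward by $\epsilon$, the contribution of this component is a Laurent expansion in the variables $v_i := \mathsf{w}_i-\mathsf{w}_1$ whose coefficients are products of classes $\epsilon_* c_\bullet(\mathbb{F}_d)$.

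All remaining fixed loci of the graph sum necessarily involve at least one edge or a vertex at some $\xi_{j}$ with $j \neq 1$; in each case, under $\nu$ they map into the boundary $\overline{M}_g \setminus M_g$. Modulo the boundary, only the distinguished contribution survives in $R^*(M_g)$, and the requirement that this Laurent expansion in the $v_i$ be a polynomial forces the vanishing of its negative-power coefficients — a family of relations in $R^*(M_g)$ of the form $\epsilon_* c_{r_g(d)+m}(\mathbb{F}_d)\equiv 0$ (modulo products with lower Chern classes, which in turn vanish inductively). The restriction to \emph{even} shifts $m=2k$ emerges from Mumford's identity $c(\mathbb{E})c(\mathbb{E}^{*})=1$ applied to the $\mathbb{E}^{*}$-summand of $\mathbb{F}_d=\mathbb{E}^{*}-\mathbb{B}_d-\com$, which kills the odd-degree Hodge part and leaves only the even-degree shifts after the symmetric-group symmetrization on $\overline{M}_{g,0|d}/\mathbb{S}_d$.

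\emph{Main obstacle.} The principal technical difficulty is the clean isolation of the single Chern class $c_{r_g(d)+2k}(\mathbb{F}_d)$ from the full Laurent expansion, together with controlling the non-distinguished fixed loci sufficiently to verify that they contribute only to boundary classes. The first requires a careful choice of specialization or coefficient extraction in the weights, making use of the relation $c(\mathbb{E})c(\mathbb{E}^{*})=1$ to force the odd shifts to cancel. The second requires matching the edge and vertex contributions of Section~\ref{fff32} against the boundary stratification of $\overline{M}_g$.
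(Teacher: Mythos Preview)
Your proposal has a genuine gap. The claim that ``all remaining fixed loci \ldots\ map into the boundary $\overline{M}_g\setminus M_g$'' is false: the single-vertex locus at each $\xi_j$ with $j\neq 1$ is again isomorphic to $\overline{M}_{g,0|d}/\mathbb{S}_d$ and its image under $\nu$ meets the interior $M_g$ exactly as the locus at $\xi_1$ does. These loci are symmetric copies of your distinguished one, with the weights permuted; none of them is confined to the boundary. Consequently, modulo the boundary you do not isolate a single Laurent series in the $v_i$, and the polynomiality constraint you invoke applies only to the \emph{sum} of all $n$ single-vertex contributions, not to any one of them. You must keep all of these terms in play; as written, the argument collapses at this step.

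A second problem is your explanation of the even-shift restriction. Mumford's relation $c(\mathbb{E})c(\mathbb{E}^{*})=1$ does not by itself kill the odd Chern classes of $\mathbb{F}_d$ after push-forward; there is no $\mathbb{E}$-factor to pair $\mathbb{E}^{*}$ against here. In the paper's argument the parity comes from something else entirely: one works over $M_g$ with the target $\proj^1$ and a $\com^{*}$-action, so there are exactly \emph{two} single-vertex fixed loci (over $0$ and $\infty$). One caps with the literally-zero class $0^{c}$ (the $c$-th power of $c_1$ of a trivial bundle, lifted equivariantly with weight $1$) before localizing; the resulting identity is Proposition~\ref{htht}. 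Specializing to $a=0$, $b=1$, the contributions over $0$ and $\infty$ are $\epsilon_* c_{r_g(d)+c}(\mathbb{F}_d)$ and $(-1)^{c}\epsilon_* c_{r_g(d)+c}(\mathbb{F}_d)$ up to a common nonzero factor, and these sum to zero precisely when $c$ is even. So both the target ($\proj^1$, not $\proj^{n-1}$ for large $n$) and the mechanism producing the vanishing (an inserted trivially-zero class rather than a dimension count) differ from what you propose, and the parity restriction is a sign cancellation between two interior fixed loci, not a Hodge-bundle identity.
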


Since the morphism 
$\epsilon$ has fibers of dimension $d$,
$$\epsilon_* \left( c_{r_g(d)+2k}(\mathbb{F}_d) \right) \in R^{g-2d-1+2k}(M_g).$$
By Looijenga's vanishing, Theorem 5 is only nontrivial when
$$0 \leq {2d-2k -1} \leq g-2.$$ The vanishing of Theorem 5
 does not naively extend. We calculate 
\begin{equation}
\label{heww1}
\epsilon_* \left( c_{r_g(1)+1}(\mathbb{F}_1) \right) = 
\kappa_{g-2}-\lambda_1 \kappa_{g-3}+ \ldots + (-1)^{g-2}\kappa_0\lambda_{g-2} 
\end{equation}
in $R^{g-2}(M_g)$
by \eqref{laloo1}. However, the class \eqref{heww1}
is known not to vanish by the pairing with $\lambda_g \lambda_{g-1}$
calculated in \cite{P}. 

Theorem 5 directly yields relations among the generators
$\kappa_i$
of 
$R^*(M_g)$
by the standard $\epsilon$ push-forward rules \cite {Faber}.
The construction is more subtle than the
method of \cite{Faber} as the relations 
only hold after push-forward.
An advantage is that the boundary terms of
the relations here can easily be calculated.

\subsection{Example}\label{ggrr2}
The Chern classes of $\mathbb F_d$ can be easily computed. 
Recall the divisor $D_{i,j}$ where the markings $\widehat p_i$ 
and $\widehat p_j$ coincide. Set $$\Delta_i=D_{1,i}+\ldots+D_{i-1,i},$$ 
with the convention $\Delta_1=0.$ 
Over $[C, \widehat p_1, \ldots, \widehat p_d],$ the virtual 
bundle $\mathbb F_d$ is the formal difference 
$$H^1(\mathcal O_C(\widehat p_1+\ldots+\widehat p_d))
-H^0(\mathcal O_C(\widehat p_1+\ldots+\widehat p_d)).$$ 
Taking the cohomology of the exact sequence 
$$0\to \mathcal O_C(\widehat p_1+\ldots+\widehat p_{d-1})\to 
\mathcal O_C(\widehat p_1+\ldots+\widehat p_d)\to  
\mathcal O_C(\widehat p_1+\ldots+\widehat p_d)|_{\widehat p_d}\to 0,$$ 
we find $$c(\mathbb F_d)=
\frac{c(\mathbb F_{d-1})}{1+\Delta_d-\widehat \psi_d}.$$ 
Inductively, we obtain
 \begin{equation}\label{laloo}c(\mathbb F_d)=
\frac{c(\mathbb E^*)}{(1+\Delta_1-\widehat{\psi}_1)\cdots 
(1+\Delta_d-\widehat {\psi}_d)}.\end{equation}

In the $d=2$ and $k=1$ case, Theorem $5$ gives the vanishing of the 
class $$\epsilon_*\, c_{g-1}(\mathbb F_d)=\epsilon_* 
\left[\frac{c(\mathbb E^*)}{(1-\widehat\psi_1)(1+\Delta-\widehat \psi_{2})}\right]^{g-1},$$ where $\Delta$ is the divisor of coincident markings on $M_{g, 0|2}$. 
The superscript indicates the degree $g-1$ part of the bracketed expression. 
Expanding, we obtain \begin{equation}\label{sq}\sum_{i} (-1)^i\lambda_{g-1-i} \sum_{i_1+i_2=i} 
\epsilon_* \left(\widehat \psi_1^{i_1}\, (\widehat\psi_2-\Delta)^{i_2}\right)=0.\end{equation} 
We have $$\epsilon_{*}
\left (\widehat \psi_1^{i_1}\, (\widehat \psi_2-\Delta)^{i_2}\right)=
\sum_{m}  (-1)^{i_2-m} \binom{i_2}{m}\epsilon_{*} 
\left(\widehat \psi_1^{i_1} \,\widehat \psi_2^m\, \Delta^{i_2-m}\right).$$ 
Using $$\Delta^2=-\widehat \psi_1 \,\Delta = -\widehat \psi_2\, \Delta$$ 
and the $\epsilon$-calculus rules in \cite {Faber}, we rewrite the last expression as 
$$-\sum_{m\neq i_2} \binom{i_2}{m} \epsilon_{*} 
(\widehat \psi_1^{i_1+i_2-1} \Delta)+\epsilon_{*} (\widehat \psi_1^{i_1}\, \widehat \psi_2^{i_2})
=-(2^{i_2}-1) \kappa_{i_1+i_2-2}+\kappa_{i_1-1}\,\kappa_{i_2-1}.$$ 
After summing over 
$i_1,i_2$ in \eqref{sq}, we arrive at the relation 
\begin{equation}\label{htyy}
\sum_{i=2}^{g-1} (-1)^i \lambda_{g-1-i} \left(\left(\sum_{i_1+i_2=i} \kappa_{i_1-1} \kappa_{i_2-1}\right) 
-(2^{i+1}-i-2)\kappa_{i-2}\right)=0
\end{equation} 
in $R^{g-3}(M_g)$. 

The $\lambda$ classes
can be expressed in terms of the $\kappa$ classes by Mumford's Chern
character calculation
$$\text{ch}_{2\ell}(\mathbb{E}) = 0, \ \ \ \ 
\text{ch}_{2\ell-1}(\mathbb{E}) = \frac{B_{2\ell}}{(2\ell)!}
\kappa_{2\ell-1}$$
for $\ell>0$.
From \eqref{htyy}, we
obtain a relation 
involving only the tautological generators $\kappa_i$.
To illustrate, in genus $6$, we obtain the relation 
$$25\kappa_1^3+15 912 \kappa_3 - 1 080 \kappa_1 \kappa_2= 0,$$ 
which is consistent with the presentation of $R^*(M_6)$ 
in \cite {Faber}.

\subsection{Brill-Noether construction}
The $k=1$ case of Theorem 5 for positive $d\leq g$
admits an alternative derivation
via Brill-Noether theory.{\footnote{The Brill-Noether connection
was suggested by C. Faber who recognized equation \eqref{htyy}.}}

To start, consider the rank $d$ bundle,
$$\mathbb{W}_d\rarr M_{g,0|d}\ ,$$ 
with fiber
$H^0(C,\omega_C|_{\sum_{j=1}^d \widehat{p}_j})$
over $[C, \widehat{p}_1,\ldots,\widehat{p}_d].$
There is a canonical map of vector bundles on
$M_{g,0|d}$,
$$\rho:\mathbb{E} \ {\rarr}\ \mathbb{W}_d\ ,$$
defined by the restriction 
$H^0(C,\omega_C) \rarr H^0(C,\omega_C|_{\sum_{j=1}^d \widehat{p}_j})$.
After dualizing, we obtain
$$\rho^*:\mathbb{W}_d^*\  {\rarr}\ \mathbb{E}^*\ .$$
If $\rho^*$ fails to have maximal rank at
$[C,\widehat{p}_1,\ldots,\widehat{p}_d]\in M_{g,0|d}$ , then
the divisor $\widehat{p}_1+\ldots +\widehat{p}_d$
must move in a nontrivial linear series. 
The degeneracy locus of $\rho^*$ precisely
defines the Brill-Noether variety \cite{ACGH} 
$$G^1_d\subset M_{g,0|d}\ ,$$
well-known to be of expected codimension $g-d+1$.
Since 
$$\epsilon: G^1_d \rarr M_g$$
has positive dimensional fibers, certainly
\begin{equation*}
\epsilon_*[G^1_d]=0 \ \in A^*(M_g)
\end{equation*}
By the Porteous formula \cite{Ful},
$$[G^1_d] = c_{g-d+1}(\mathbb{E}^*- \mathbb{W}^*_d)\ .$$
Hence,  we obtain the relation 
\begin{equation}\label{kq12}
\epsilon_*\left( c_{g-d+1}(\mathbb{E}^*- \mathbb{W}^*_d)\right)=0 \ \in R^*(M_g) \ .
\end{equation}

\begin{Lemma}\label{p67} $\mathbb{W}_d \stackrel{\sim}{=} \mathbb{B}^*_d\ $. 
\end{Lemma}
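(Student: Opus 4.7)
The plan is to establish the isomorphism by combining the adjunction formula for the relative Cartier divisor $D=\sum_j \widehat p_j$ with relative duality for the finite flat restriction $\pi|_D: D \to M_{g,0|d}$ of the universal curve $\pi: U \to M_{g,0|d}$.

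First, I will observe that $D \subset U$ is a relative Cartier divisor, and $\pi|_D$ is finite and flat of degree $d$. The relative adjunction formula on the universal curve reads
\begin{equation*}
\omega_{U/M}(D)|_D \cong \omega_{D/M},
\end{equation*}
so that, on $D$, the two line bundles $\omega_{U/M}|_D$ and $\mathcal{O}_U(D)|_D$ satisfy the duality relation
\begin{equation*}
\omega_{U/M}|_D \cong \mathcal{H}om_{\mathcal{O}_D}\!\bigl(\mathcal{O}_U(D)|_D,\ \omega_{D/M}\bigr).
\end{equation*}
Since $\omega_C$ on a fiber is obtained by restriction of $\omega_{U/M}$, pushing forward gives $\mathbb{W}_d=\pi_*(\omega_{U/M}|_D)$, while by definition $\mathbb{B}_d=\pi_*(\mathcal{O}_U(D)|_D)$.

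Next, I will invoke Grothendieck duality for the finite flat morphism $\pi|_D$. For any locally free sheaf $\mathcal{F}$ on $D$, this gives
\begin{equation*}
\pi_*\mathcal{H}om_{\mathcal{O}_D}(\mathcal{F},\omega_{D/M})\cong \mathcal{H}om_{\mathcal{O}_M}(\pi_*\mathcal{F},\mathcal{O}_M).
\end{equation*}
Taking $\mathcal{F}=\mathcal{O}_U(D)|_D$ and combining with the adjunction identity above yields
\begin{equation*}
\mathbb{W}_d=\pi_*(\omega_{U/M}|_D)\cong \mathcal{H}om_{\mathcal{O}_M}(\mathbb{B}_d,\mathcal{O}_M)=\mathbb{B}_d^*,
\end{equation*}
as claimed.

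The only delicate point will be verifying that the adjunction formula and relative duality are genuinely applicable here, given that $D$ is allowed to be nonreduced (because the markings $\widehat p_j$ may coincide) and that the curves $C$ may be nodal. However, the markings lie in $C^{\mathrm{ns}}$ by definition of the moduli space $M_{g,0|d}$, so locally near $D$ the universal curve is smooth over the base, and $D$ is a relative Cartier divisor in this smooth locus; thus $\pi|_D$ is finite flat and both the adjunction formula and Grothendieck duality hold in their standard form. One should also check that the two isomorphisms in the argument are compatible, but since each comes from a canonical construction (the adjunction exact sequence on the one hand, and the trace map for the finite morphism $\pi|_D$ on the other), no sign or orientation subtlety arises. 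This is the main technical step; the remainder is formal.
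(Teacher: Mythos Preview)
Your proof is correct, and it takes a genuinely different (and arguably cleaner) route than the paper. The paper argues fiberwise: it first invokes Serre duality on the curve $C$ to identify $H^0(C,\omega_C|_{\widehat P})$ with $\Ext^1_C(\oh_{\widehat P},\oh_C)^*$, and then computes $\Ext^1_C(\oh_{\widehat P},\oh_C)$ by replacing $\oh_{\widehat P}$ with the two--term locally free complex $I=[\oh_C(-\widehat P)\to\oh_C]$ and dualizing to $I^*$, which is quasi-isomorphic to $\oh_{\widehat P}(\widehat P)[-1]$. Your argument instead works directly in families and uses duality for the \emph{finite} morphism $\pi|_D:D\to M_{g,0|d}$: the adjunction identity $\omega_{U/M}(D)|_D\cong\omega_{D/M}$ packages exactly the Ext computation the paper does by hand, and Grothendieck duality for $\pi|_D$ replaces the fiberwise Serre duality on $C$. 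The advantage of your route is that it delivers the isomorphism $\mathbb{W}_d\cong\mathbb{B}_d^*$ as an isomorphism of vector bundles on the base in one stroke, whereas the paper's fiberwise argument implicitly relies on the canonical nature of the isomorphisms to globalize. Both arguments ultimately rest on the same duality theory; yours is the more streamlined packaging.
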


\bpf 
Let $\widehat{P}\subset C$ denote the divisor $\widehat{p}_1+\ldots +\widehat{p}_d$.
The fiber of $\mathbb{W}_d$ over $[C, \widehat{p}_1,\ldots,\widehat{p}_d]$
is 
$$\Ext^0(\oh_C,\omega_C|_{\widehat{P}}) \stackrel{\sim}{=} \Ext^1(\oh_{\widehat{P}},\oh_C)^*$$
by Serre duality. Let
$$I = [\oh_C(-\widehat{P}) \rarr \oh_C]$$
denote  the complex of line bundles in grade -1 and 0.
Since $I$ is quasi-isomorphic to $\oh_{\widehat{P}}$, we find
$$\Ext^{1}(I,\oh_C) \stackrel{\sim}{=} \Ext^1(\oh_{\widehat{P}},\oh_C)$$
On the  other hand, we have
$$I^* = [\oh_C \rarr \oh_C(\widehat{P})] \ \ {\text{and}} \ \
\Ext^{1}(\oh_C,I^*) \stackrel{\sim}{=} \Ext^0(\oh_C,\oh_{\widehat{P}}(\widehat{P})).$$
We have hence found a canonical isomorphism
$$\Ext^1(\oh_{\widehat{P}},\oh_C)\stackrel{\sim}{=} \Ext^0(\oh_C,\oh_{\widehat{P}}(\widehat{P}))$$
where the latter space is the fiber of $\mathbb{B}_d$
\epf


The $k=1$ case of Theorem 5 concerns the class
\begin{eqnarray*}
c_{g-d+1}(\mathbb{F}_d) & = & 
c_{g-d+1}(\mathbb{E}^* - \mathbb{B}_d - \com ) \\
& = & c_{g-d+1}(\mathbb{E}^* - \mathbb{B}_d) \\
& = & c_{g-d+1}(\mathbb{E}^* - \mathbb{W}^*_d).
\end{eqnarray*}
Hence, the vanishing 
$$\epsilon_*( c_{g-d+1}(\mathbb{F}_d)) = 0$$
of Theorem 5 exactly coincides with the Brill-Noether
vanishing \eqref{kq12}.

Theorem 5 may be viewed as a generalization of
Brill-Noether vanishing obtained from the
virtual geometry of the moduli of stable quotients.

\subsection{Proof of Theorem 5}
Consider the proper morphism
$$\nu: Q_{g}(\proj^1,d) \rarr M_g.$$
The universal curve 
$$\pi:U \rarr Q_{g}(\proj^1,d)$$ carries the  basic
divisor classes
$${s} = c_1(S_U^*), \ \ \ \  \omega= c_1(\omega_\pi)$$
obtained from the universal subsheaf $S_U$ and the $\pi$-relative dualizing
sheaf. 
The
class
\begin{equation}\label{p236}
\nu_*\left( \pi_*(s^a \omega^b)\cdot 0^c \cap [Q_g(\proj^1,d)]^{vir} \right)
\in A^*(M_g,\mathbb{Q}),
\end{equation}
where $0$ is first Chern class of the trivial bundle,
certainly vanishes if $c>0$. 
Theorem 5 is proven by calculating \eqref{p236} by localization.
We will find Theorem 5 is a subset of a richer
family of relations.

Let the 1-dimensional torus $\com^*$ act on a 2-dimensional vector
space $V\stackrel{\sim}{=}\com^2$ with diagonal weights
$[0,1]$.  The $\com^*$-action lifts
canonically to the following spaces and sheaves:
$$\proj(V),\ \
Q_{g}(\proj(V),d), \ \ U, \ \ S_U, \ \ \text{and}\ \ \omega_\pi.$$
We lift the $\com^*$-action to a rank 1 trivial bundle on $Q_g(\proj(V), d)$
by specifying fiber weight $1$.
The choices determine a $\com^*$-lift of the
class 
$$\pi_*(s^a\cdot \omega^b)
\cdot 0^c \cap [Q_g(\proj(V),d)]^{vir}\in 
A_{2d+2g-1-a-b-c}(
Q_g(\proj(V),d),\mathbb{Q}).$$
 
The push-forward  \eqref{p236} is determined by 
the virtual localization formula \cite{GP}. There are
only two $\com^*$-fixed loci.
The first corresponds to a vertex lying over $0\in \proj(V)$.
The locus is isomorphic to
$$M_{g,0|d}\ /\ \mathbb{S}_d$$
 and the
associated subsheaf \eqref{jwq2}
lies in the first factor of
$V \otimes  \oh_C$ when considered as a stable quotient in
the moduli space $Q_g(\proj(V),d)$.
Similarly, the second fixed locus 
corresponds to a vertex lying over $\infty\in \proj(V)$.

The localization contribution of the first locus to \eqref{p236} is 
$$\frac{1}{d!}\epsilon_*
\left(\pi_*(s^a\omega^b)\cdot c_{g-d-1+c}(\mathbb{F}_d)\right)$$
where $s$ and $\omega$ are the corresponding classes on
the universal curve over $M_{g,0|d}$.
Let $c_-(\mathbb{F}_d)$ denote the total Chern class of $\mathbb{F}_d$
evaluated at $-1$.
The localization contribution of the second locus is
$$\frac{(-1)^{g-d-1}}{d!}\epsilon_*\Big[
\pi_*\left((s-1)^a\omega^b\right)
\cdot c_{-}(\mathbb{F}_d)\Big]^{g-d-2+a+b+c}$$
where $[\gamma]^k$ is the part of $\gamma$ in $A^k(M_{g,{0|d}},\mathbb{Q})$.

Both localization contributions are found by straightforward
expansion of the vertex formulas of Section \ref{fff32}.
Summing the contributions yields the following result.

\begin{Proposition} \label{htht}
Let $c>0$. Then
\begin{multline*}
\epsilon_*\Big(
\pi_*(s^a\omega^b)\cdot c_{g-d-1+c}(\mathbb{F}_d) + \\
(-1)^{g-d-1} \Big[
\pi_*\left((s-1)^a\omega^b\right)
\cdot c_{-}(\mathbb{F}_d) \Big] ^{g-d-2+a+b+c}
\Big) = 0
\end{multline*}
in $R^*(M_g)$.
\end{Proposition}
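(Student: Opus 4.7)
The plan is to apply the virtual localization formula of \cite{GP} under the $\com^*$-action described just before the proposition. Let $t$ be the equivariant parameter. Since the trivial rank one bundle was lifted to weight $1$, its equivariant first Chern class equals $t$, so the class $0^c$ becomes $t^c$ equivariantly. The equivariant lift of $\pi_*(s^a\omega^b)\cdot 0^c\cap[Q_g(\proj(V),d)]^{vir}$ is therefore $t^c$ times a well-defined class, and its $\nu$-push-forward to $M_g$ lies in $A^*(M_g,\Q)[t]$ and is divisible by $t^c$. For $c>0$ the coefficient of $t^0$ vanishes identically.

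Next I would identify the $\com^*$-fixed loci in $Q_g(\proj(V),d)$. A $\com^*$-invariant rank one subsheaf of $V\otimes\oh_C = V_0\otimes\oh_C \oplus V_1\otimes\oh_C$ must be a sum of eigensheaves, and having rank one it must lie entirely in a single summand. Hence there are precisely two fixed loci, with $S$ factoring through $V_0\otimes\oh_C$ (``over $0$'') or through $V_1\otimes\oh_C$ (``over $\infty$''). By Proposition \ref{xx23}, each fixed locus is isomorphic to $\overline{M}_{g,0|d}/\mathbb{S}_d$, with universal subsheaf $\oh_C(-\sum\widehat{p}_j)\otimes L$, where $L$ is the trivial line bundle of weight $0$ on the first locus and weight $1$ on the second.

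The core step is to evaluate the two vertex residues using Section \ref{fff32}, then equate to zero the coefficient of $t^0$ in their sum. At the vertex over $0$, $S$ carries no equivariant twist, so $s$ restricts to $s$ and $c(\mathbb{F}_d)$ is unchanged; expanding the inverse Euler class of the virtual normal bundle as a Laurent series in $t$ and extracting the coefficient of $t^0$ selects exactly the Chern class $c_{g-d-1+c}(\mathbb{F}_d)$, producing the first summand together with an overall $\tfrac{1}{d!}$ from the $\mathbb{S}_d$-quotient. At the vertex over $\infty$, the weight-$1$ twist of $S$ shifts $c_1(S^*)=s$ to $s-t$ equivariantly; via the identification $\mathbb{F}_d=\mathbb{E}^*-\mathbb{B}_d-\com$ of Section \ref{txtq}, the total Chern class $c(\mathbb{F}_d)$ undergoes a corresponding equivariant shift whose $t^0$-specialization assembles into $c_-(\mathbb{F}_d)$; binomial expansion of $(s-t)^a$ specializes to $(s-1)^a$; and the opposite tangent weight $-t$ at $\infty$, combined with the virtual normal rank, accounts for the sign $(-1)^{g-d-1}$. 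Selecting the degree $g-d-2+a+b+c$ part is then forced by codimension matching.

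The main technical obstacle is the equivariant Chern class bookkeeping at the $\infty$-vertex: one must carefully propagate the weight-$1$ twist of the universal subsheaf through the two-term resolution of $R\pi_*(S^*_U)$ and verify that the $t^0$-coefficient of the resulting expansion reconstructs precisely the product of the polynomial $(s-1)^a$ with $c_-(\mathbb{F}_d)$, restricted to the correct codimension and carrying the stated sign. Once both residues are evaluated, the proposition is obtained by setting the $t^0$-coefficient of their sum to zero and clearing the common factor $\tfrac{1}{d!}$.
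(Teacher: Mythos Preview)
Your proposal is correct and follows essentially the same approach as the paper: vanishing of the non-equivariant push-forward, virtual localization over the two $\com^*$-fixed loci, and extraction of the constant term in $t$. One small point worth tightening: the fixed loci are $M_{g,0|d}/\mathbb{S}_d$ (not $\overline{M}_{g,0|d}/\mathbb{S}_d$), and the reason there are \emph{only} two of them is that you are working over $M_g$, so the domain curves are smooth and irreducible---this is what rules out graphs with edges, not just the eigensheaf argument you gave.
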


If $a=0$ and $b=1$, the relation of Proposition \ref{htht}
specializes to Theorem 5 for even  $c=2k$.
\qed

\begin{Question}
Do the relations obtained from Proposition \ref{htht}
generate all the relations among the
classes $\kappa_i$ in $R^*(M_g)$ ?
\end{Question}

\subsection{Further examples}
Let $\sigma_i\in A^1(U,\mathbb{Q})$ be the
class of the $i^{th}$ section of the universal curve 
$$\pi: U \rarr M_{g,0|d}\ .$$ The class $s=c_1(S_U^*)$ 
of Proposition \ref{htht} is
$$s=\sigma_1 + \ldots +\sigma_d\ \in A^1(U,\mathbb{Q}).$$
We calculate 
\begin{eqnarray*}
\pi_*(s) & = & d \\
\pi_*(\omega)& = & 2g-2 \\
\pi_*(s\ \omega) & =&  \widehat{\psi}_1 +\ldots +\widehat{\psi}_d\\
\pi_*(s^2) & = & -(\widehat{\psi}_1 +\ldots +\widehat{\psi}_d) + 2 \Delta\\
\end{eqnarray*}
in  $A^*(M_{g,0|d},
\mathbb{Q})$,
where 
$$ \Delta = \sum_{i<j} D_{i,j} \ \in A^1(M_{g,0|d},
\mathbb{Q})$$
is the symmetric diagonal.
The push-forwards $\pi_*(s^a \omega^b)$ are all easily
obtained.

Using the above $\pi_*$ calculations,
the $a=1$, $b=1$, $c=2k$ case of Proposition \ref{htht}
yields 
$$\epsilon_*\Big( 2
(\widehat{\psi}_1 +\ldots +\widehat{\psi}_d)
\cdot c_{r_g(d)+2k}(\mathbb{F}_d)  + (2g-2)\
c_{r_g(d)+2k+1}(\mathbb{F}_d)\Big) =0.
$$
The $a=2$, $b=0$, $c=2k$ case yields
$$\epsilon_*\Big( -2
(\widehat{\psi}_1 +\ldots +\widehat{\psi}_d-2\Delta)
\cdot c_{r_g(d)+2k}(\mathbb{F}_d)  + 2d\cdot
c_{r_g(d)+2k+1}(\mathbb{F}_d)\Big) =0.
$$
Summation yields a third relation,
$$\epsilon_*\Big( 2\Delta
\cdot c_{r_g(d)+2k}(\mathbb{F}_d) +(d+g-1)\cdot
c_{r_g(d)+2k+1}(\mathbb{F}_d)\Big) =0.
$$
The relations of Proposition \ref{htht} include the
classes $c_{r_g(d)+2k+1}(\mathbb{F}_d)$
omitted in Theorem 5.

\section{Calabi-Yau geometry}
\label{lllll}
The moduli of stable quotients may be used to define
counting invariants in the local Calabi-Yau geometries.
For example consider the conifold, the total space of
$$\oh_{\proj^1}(-1) \oplus \oh_{\proj^1}(-1) \rarr \proj^1.$$
Just as in Gromov-Witten theory, we define
\begin{equation}\label{gt123}
 N_{g,d} = \frac{1}{d^2}\int_{[\overline{Q}_{g,2}(\proj^1,d)]^{vir}}
\mathsf{e}( R^1\pi_*(S_U) \oplus R^1\pi_*(S_U)) \cup \text{ev}_1^*(H)
\cdot \text{ev}_2^*(H)
\end{equation}
where $S_U$ is the universal subsheaf on the
universal curve 
$$\pi: U \rarr \overline{Q}_{g,2}(\proj^1,d)$$
and $H\in H^2(\proj^1,\mathbb{Q})$
is the hyperplane class.
The two point insertions are required for stability in genus 0.
Let
$$F(t) = \sum_{g\geq 1} N_{g,1} t^{2g}.$$

\begin{Proposition} The local invariants $N_{g,d}$
are determined by the following two equations,
$$N_{g,d} = {d^{2g-3}} N_{g,1},$$
$$F(t)= \left(\frac{t/2}{\sin(t/2)} \right)^2.$$
\end{Proposition}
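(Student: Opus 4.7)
The plan is to reduce both identities to the known Gromov-Witten computation of the conifold. On the stable map side, the corresponding invariants $N_{g,d}^{GW}$ satisfy exactly the two identities claimed here: the multiple cover formula $N_{g,d}^{GW} = d^{2g-3} N_{g,1}^{GW}$ follows from $\mathbf{T}$-equivariant localization on $\overline{M}_{g,2}(\proj^1, d)$ in which only the single-edge graph contributes (the edge being a fully ramified degree-$d$ cover of $\proj^1$), while the generating series $F^{GW}(t)$ is evaluated in closed form via the Faber-Pandharipande Hodge integral identity, giving exactly $(t/2 \, / \sin(t/2))^2$. It therefore suffices to prove $N_{g,d} = N_{g,d}^{GW}$.

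Since Theorem 4 gives only the equality of descendent brackets, the main step is to extend that comparison to accommodate the Euler-class twist by $\mathsf{e}(R^1\pi_*(S_U)^{\oplus 2})$. I would analyze the behavior of $R^1\pi_*(S_U)$ under the morphism
$$c: \overline{M}_{g,2}(\proj^1,d) \to \overline{Q}_{g,2}(\proj^1,d)$$
of Section 5.2. For a stable map $[f:C \to \proj^1]$, the natural obstruction sheaf is $R^1\pi_*(f^*\mathcal{O}(-1))$; on the stable quotient side, the universal subsheaf pulls back under $c$ to the sheaf obtained by further twisting $f^*\mathcal{O}(-1)|_{\widehat C}$ by $-\sum d_i x_i$, where the $x_i$ are the incidence points with the contracted rational trees $T_i$ carrying degrees $d_i$. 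The short exact sequence relating the two sheaves has cokernel supported on the contracted trees, and a direct cohomology chase — using that $\mathcal{O}(-1)$ restricted to a degree-$d_i$ cover of a chain of $\proj^1$'s has no higher cohomology — shows that $c_*$ intertwines the two Euler-class twists in exactly the form needed to adapt the localization-matching argument of Section 7. Combined with the descendent comparison of Theorem 4, this yields $N_{g,d} = N_{g,d}^{GW}$, and both identities follow.

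The main obstacle is the compatibility of the Euler-class twist with $c$ at the level of virtual cycles: the argument of Section 7 matches vertex contributions between stable map and stable quotient localization diagrams, but the obstruction-bundle insertion introduces extra vertex-local and edge-local factors that must also match. If the direct comparison through $c$ proves delicate, the alternative is to bypass the comparison map and compute $N_{g,d}$ by $\mathbf{T}$-localization directly on $\overline{Q}_{g,2}(\proj^1,d)$, using the fixed-locus description of Section 6.3 with the two markings pinned to the torus-fixed points of $\proj^1$. In that approach, the combinatorial identity reducing the resulting graph sum to $d^{2g-3} N_{g,1}$ is the standard one for fully ramified covers of $\proj^1$, and the remaining vertex Hodge integrals for $d=1$ evaluate to the Faber-Pandharipande formula, giving $F(t) = (t/2 \, / \sin(t/2))^2$.
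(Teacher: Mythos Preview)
Your main approach via the comparison map $c$ does not work, and the paper explicitly flags this: immediately after the proof it remarks that the bundles $R^1\pi_*(S_U)$ and $R^1\pi_*(f^*\mathcal{O}(-1))$ in the two integrands are \emph{not} compatible under $c$. Your cohomology claim is the point of failure. For a contracted tree $T_i$ carrying degree $d_i$, the pull-back $f^*\mathcal{O}(-1)|_{T_i}$ has total degree $-d_i$ on a genus $0$ nodal curve, so $h^1(T_i, f^*\mathcal{O}(-1)|_{T_i}) = d_i - 1$, not zero; already a single $\proj^1$ mapping with degree $2$ gives $\mathcal{O}(-2)$ with $h^1 = 1$. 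The excess on the stable map side comes from these $H^1(T_i)$ contributions, while on the stable quotient side it comes from the skyscrapers $\mathcal{O}_{d_i x_i}$; the two are different as $K$-classes and have different Chern classes, so no naive push-pull identity of Euler classes can hold.

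Your fallback --- direct $\com^*$-localization on $\overline{Q}_{g,2}(\proj^1,d)$ --- is indeed what the paper does, but you have not identified the mechanism that makes it work. The point is a careful choice of equivariant lifts: weights $[0,1]$ on $V$, the canonical lift on the first $R^1\pi_*(S_U)$, a twist by a weight $-1$ trivial bundle on the second, and the two $H$ insertions sent to \emph{distinct} fixed points. These are exactly the choices of Faber--Pandharipande, and the same vanishing that occurs there kills every fixed locus except the single-edge graphs with two vertices of genera $g_1+g_2=g$ and, crucially, $\mathbf{s}(v)=0$ at both vertices. With no $\widehat{p}$-markings the vertex moduli are $\overline{M}_{g_i,2|0}=\overline{M}_{g_i,2}$, so on the surviving loci the stable quotient and stable map fixed loci literally coincide and $R^1\pi_*(S_U)$ agrees with the Gromov--Witten obstruction bundle. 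The equality $N_{g,d}=N_{g,d}^{GW}$ then follows locus by locus, not from any global compatibility. Without isolating this vanishing you would be left summing over all fixed loci with arbitrary $\mathbf{s}(v)$, and there is no evident combinatorial identity reducing that sum.
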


\begin{proof} We compute the integral $N_{g,d}$
by localization. Let $\com^*$ act on 
the vector space
$V\stackrel{\sim}{=}\com^2$ with diagonal weights
$[0,1]$. The $\com^*$-action lifts
canonically to $\overline{Q}_{g,2}(\proj(V),d)$
and $S_U$. For the first $S_U$ in the integrand
\eqref{gt123}, we use the canonical lifting of $\com^*$.
For the second $S_U$, we tensor by a trivial line bundle
with fiber weights $-1$ over the two 
$\com^*$-fixed points of $\proj (V)$.
The classes $H$ are lifted to the distinct $\com^*$-fixed
points on $\proj(V)$.

The above choice of $\com^*$-action on the integrand
exactly parallels the choice of $\com^*$-action taken in \cite{FP}
for the analogous Gromov-Witten calculation.
The vanishing obtained in \cite{FP} also applies
for the stable quotient calculation here. 
The only loci with non-vanishing contribution to
the localization sum consist of two vertices
of genera 
$$g_1+g_2=g$$ connected by a single edge of
degree $d$.
The moduli spaces at these vertices are
$\overline{M}_{g_i,2|0}$ where
\begin{enumerate}
\item[(i)] the first two points are the respective node and marking,
\item[(ii)] there are no markings after the bar by vanishing.
\end{enumerate}

We find that the only non-vanishing contributions occur on
$\com^*$-fixed loci where the moduli of
stable quotients and the moduli of stable  maps 
are isomorphic. Moreover, on these 
loci, the bundle $R^1{\pi_*}(S_U)$
agrees with the analogous Gromov-Witten bundle.
Hence, the stable quotient integral $N_{g,d}$
is equal to the Gromov-Witten calculation of the
conifold \cite {FP}.
\end{proof}

The matching is somewhat of a surprise.
While the virtual classes of the stable quotient and
stable maps spaces to $\proj^1$ are related by Theorem 3,
the bundles in the respective integrands
for the conifold geometry are {\em not} compatible.
However, the differences happen away from the
non-vanishing loci.

If $g\geq 1$, no point insertions are required for
stability. The associated conifold integral is more subtle to
calculate, but the same result is obtained. We leave 
the details to the reader.{\footnote{The vanishing,
as before, matches the $\com^*$-fixed point
loci of the stable quotients and stable
maps spaces. However, the two which correspond
to a single vertex of genus $g$ are now not obviously
equal.
The match for these is obtained by redoing
the pointed integral \eqref{gt123}
 with both $H$ classes
in the integrand taken to lie over the {\em same} $\com^*$-fixed
point.}}

\begin{Proposition} For $g\geq 1$, 
$$N_{g,d} = 
\int_{[\overline{Q}_{g,0}(\proj^1,d)]^{vir}}
\mathsf{e}( R^1\pi_*(S_U) \oplus R^1\pi_*(S_U)).$$
\end{Proposition}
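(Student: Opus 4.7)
The plan is to establish the identity by $\com^*$-localization, paralleling the proof of the preceding proposition. Let $\com^*$ act on $V\cong \com^2$ with diagonal weights $[0,1]$, inducing an action on $\overline{Q}_{g,0}(\proj(V),d)$, on the universal subsheaf $S_U$, and on the relative dualizing sheaf. I lift the torus action on the first summand of $R^1\pi_*(S_U)^{\oplus 2}$ by the canonical lift from $S_U$, and on the second by tensoring with a rank $1$ trivial bundle of fiber weight $-1$. This replicates the setup of \cite{FP} for the analogous Gromov-Witten computation, so the vanishing arguments of \cite{FP} apply verbatim and restrict attention to a small list of $\com^*$-fixed graph loci.

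Under the vanishing, the contributing $\com^*$-fixed loci of $\overline{Q}_{g,0}(\proj^1,d)$ split into two types: (a) \emph{bipartite} loci, a vertex of genus $g_1$ at $[0]$ and a vertex of genus $g_2$ at $[\infty]$ with $g_1+g_2=g$, joined by a single edge of degree $d$, and (b) \emph{monovalent} loci, a single genus $g$ vertex supported over one of the two fixed points of $\proj^1$ with moduli $\overline{M}_{g,0|d}/\mathbb{S}_d$. The preceding proposition shows that only type (a) loci appear in the pointed localization of $N_{g,d}$ when the two classes $\text{ev}_i^*(H)$ are lifted equivariantly to the two \emph{distinct} fixed points. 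I expect the type (a) contributions on the two sides to match exactly, once the normalization $1/d^2$ and the evaluation factors $\text{ev}_i^*(H)$ of the pointed integral have been absorbed into the edge weight of the common bipartite graph.

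The main obstacle is the type (b) monovalent contributions on the unpointed side, which have no analogue in the standard pointed localization. To deal with them I follow the hint in the footnote to the statement and recompute $N_{g,d}$ using an alternative equivariant lift in which both $\text{ev}_i^*(H)$ are supported over the \emph{same} fixed point $p\in \proj(V)^{\com^*}$. Since $N_{g,d}$ is a non-equivariant number, the total integral is independent of the lift. For this alternative choice the bipartite contributions collapse to degenerate configurations with both markings on the $p$-vertex, and the new honest contribution comes from the single-vertex fixed locus $\overline{M}_{g,2|d}/\mathbb{S}_d$ over $p$. The key claim will be that this monovalent contribution to the modified pointed integral equals the monovalent contribution at $p$ to the unpointed integral. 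I carry out the comparison by expressing the equivariant Euler class of the obstruction bundle $R^1\pi_*(S_U)^{\oplus 2}$ via the two-term resolution $\mathbb{B}_d \to \mathbb{E}^*$ of Section \ref{txtq}, which makes the restriction of the integrand to $\overline{M}_{g,\ast|d}$ explicit in terms of the $\widehat{\psi}_j$, $D_J$, and $\lambda_i$ classes. The forgetful map $\overline{M}_{g,2|d}\to \overline{M}_{g,0|d}$ then absorbs the two cotangent factors and the $1/d^2$ against the $H$-evaluation weights coming from the chosen lift.

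Summing across the two fixed points of $\proj(V)$, the monovalent RHS contributions are matched by the two modified-pointed recomputations of $N_{g,d}$, while the bipartite contributions match the standard pointed localization. The main technical step will be the monovalent matching through the tautological resolution of $R\pi_*(S_U^*)$; once that is verified, the claimed identity follows by assembling the localization contributions on both sides.
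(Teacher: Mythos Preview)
Your plan assembles the right ingredients—the $\com^*$-localization, the vanishing from \cite{FP}, and the same-fixed-point recomputation flagged in the footnote—but the bookkeeping does not close. You attempt to match the unpointed stable-quotient integral directly against the \emph{pointed} stable-quotient integral $N_{g,d}$: you claim the bipartite unpointed contributions agree with the distinct-lift pointed answer (which already accounts for all of $N_{g,d}$), and separately that each monovalent unpointed contribution agrees with the monovalent piece of a same-point-lift pointed computation. But the same-point-lift pointed integral is \emph{also} equal to $N_{g,d}$ and carries its own nonzero bipartite part (both markings on the $p$-vertex, the other vertex of genus $\ge 1$), so your two matchings together would force the unpointed integral to exceed $N_{g,d}$ unless the monovalent pieces vanish—something you neither claim nor prove. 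In fact the pointed monovalent contribution \emph{does} vanish: on $\overline{M}_{g,2|d}$ the entire localization integrand, including the inverse Euler class of the virtual normal bundle and the constant factor $\text{ev}_1^*(H)\cdot\text{ev}_2^*(H)/d^2$, is pulled back along the forgetful map to $\overline{M}_{g,0|d}$ and hence integrates to zero. So your ``key claim'' that the pointed and unpointed monovalent contributions coincide cannot be used to control the latter.

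The paper's route is different: it compares the unpointed stable-quotient integral to the unpointed \emph{stable-map} integral, which equals $N_{g,d}$ by the Gromov--Witten divisor equation. After the vanishing, both unpointed localizations reduce to single-edge graphs. The bipartite loci with $g_1,g_2\ge 1$ are literally identical on the two sides; what remains are the two ``single genus $g$ vertex'' contributions, realized on the map side as the degenerate bipartite locus $\overline{M}_{g,1}$ carrying one edge, and on the quotient side as the edgeless locus $\overline{M}_{g,0|d}/\mathbb{S}_d$. The same-fixed-point pointed computation is then invoked to match \emph{these two} contributions to one another—not to match an unpointed quotient contribution against a pointed quotient contribution. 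The comparison with the stable-map side is precisely what your outline is missing, and it is what makes the argument close.
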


There are many other well-defined local toric Calabi-Yau geometries
to consider for stable quotients
both in dimension 3 and higher \cite{KP, ZP}.
The simplest is local $\proj^2$.

\begin{Question}
 What is the answer for the stable quotient theory
for 
$$\oh_{\proj^2}(-3) \rarr \proj^2\  \ ?$$
\end{Question}

\section{Other targets}\label{varr3}
\subsection{Virtual classes} \label{virc2}
Let $X\subset \proj^{n}$ be a projective
variety. There is a naturally associated substack
\begin{equation}\label{bbw}
\overline{Q}_{g,m}(X,d) \subset 
\overline{Q}_{g,m}(\proj^n,d)
\end{equation} 
defined by the following principle.
Let $I\subset \com[z_0,\ldots,z_n]$ be the homogeneous
ideal of $X$.
Given an element
\begin{equation}\label{hht}
(C,\ p_1,\ldots, p_m,\  0\rarr S \rarr
\com^{n+1}\otimes \oh_C \stackrel{q}{\rarr} Q \rarr 0)
\end{equation}
of $\overline{Q}_{g,m}(\proj^n,d)$, consider the
dual
$$\com^{n+1} \otimes \oh_C \stackrel{q^*}{\rarr} S^*$$
as a line bundle with $n+1$ sections $s_0,\ldots,s_n$. 
The stable quotient \eqref{hht} 
lies in
 $\overline{Q}_{g,m}(X,d)$
if for every homogeneous degree $k$ polynomial $f_k\in I$,
\begin{equation}\label{ggr}
f_k(s_0,\ldots,s_n) = 0 \in H^0(C,S^{k*}).
\end{equation}
Condition \eqref{ggr} is certainly well-defined
in families and determines a Deligne-Mumford 
substack. Local equations for
the substack \eqref{bbw} can easily be found.

\begin{Question}
If $X$ is nonsingular, does
$\overline{Q}_{g,m}(X,d)$ carry a canonical 2-term perfect
obstruction theory?
\end{Question}

The moduli space 
$\overline{Q}_{g,m}(X,d)$ depends upon the projective
embedding of $X$. If 
$\overline{Q}_{g,m}(X,d)$ does carry a virtual
class, the theory will almost certainly differ
somewhat from the Gromov-Witten counts.

If $X\subset \proj^n$ is nonsingular complete intersection,
more definite claims can be made.
For simplicity, assume $X$ is a hypersurface
defined by a degree $k$ equation $F$.
Given an element
\begin{equation*}
(C,\ p_1,\ldots, p_m,\  0\rarr S \rarr
\com^{n+1}\otimes \oh_C \stackrel{q}{\rarr} Q \rarr 0)
\end{equation*}
of $\overline{Q}_{g,m}(X,d)$,
the pull-back to $C$ of the tangent bundle to $X$
may be viewed as the complex
\begin{equation}
\label{hew}
S^*\otimes Q \stackrel{dF}{\rarr} S^{k*}
\end{equation}
defined by differentiation of the section $F$ on the zero
locus. 
We speculate an obstruction theory 
on $\overline{Q}_{g,m}(X,d)$
can be defined by the
hypercohomology of the sequence \eqref{hew}.
The 2-term condition follows from the fact
that the map $dF$ has cokernel with dimension
0 support. Many details have to be checked here.

\subsection{Elliptic invariants}\label{elt}
An interesting example to consider is
the moduli space $\overline{Q}_{1,0}(X_{n+1}\subset \proj^n,d)$
of stable quotients associated to the Calabi-Yau
hypersurfaces $X_{n+1}\subset \proj^n$.

By Proposition \ref{nn245},
$\overline{Q}_{1,0}(\proj^n,d)$
is a nonsingular space of expected dimension $(n+1)d$.
As before, let $S_U$ be the universal subsheaf on the universal
curve 
$$\pi:U \rarr \overline{Q}_{1,0}(\proj^n,d).$$
 Since
$S_U$ is locally free of rank 1, $S_U$ is a line bundle.
By the vanishing used in the proof of Proposition \ref{nn245}, 
$$\pi_* S^{*(n+1)}_U  \rarr
\overline{Q}_{1,0}(\proj^n,d)$$
is locally free of rank $(n+1)d$.

We define the genus 1 stable quotient invariants of $X_{n+1}\subset \proj^n$
by the integral
\begin{equation}\label{g45}
N^{X_{n+1}}_{1,d} = \int_{\overline{Q}_{1,0}(\proj^n,d)}
\mathsf{e}\left(\pi_* S^{*(n+1)}_U \right).
\end{equation}
The definition of $N^{X_{n+1}}_{1,d}$ is compatible
with the discussion of the virtual classes of hypersurfaces
in Section \ref{virc2}.

The genus 1 Gromov-Witten theory of
hypersurfaces has recently been solved by Zinger \cite{Zing}. 
Substantial work is required to
convert the Gromov-Witten calculation to an Euler
class on a space of genus 1 maps to projective space.
The stable quotient invariants
are immediately given by such an Euler class. There is no obstruction 
to calculating \eqref{g45} by
localization. 

\begin{Question}
What is the relationship between the stable quotient
and stable map invariants in genus 1 for Calabi-Yau
hypersurfaces?
\end{Question}

\subsection {Variants}\label{vart}
There are several variants which can be immediately
considered. Let $X$ be a nonsingular projective variety
with an ample line bundle $L$.
The stable quotient construction can be carried out
over the moduli space of stable maps $\overline{M}_{g,m}(X,\beta)$
instead of the moduli space of curves $\overline{M}_{g,m}$.
An object then consists of  three pieces of data:
\begin{enumerate}
\item[(i)] a genus $g$, $m$-pointed, quasi-stable curve $(C,p_1,\ldots,p_m)$,
\item[(ii)] a map $f:C \rarr X$ representing class 
$\beta\in H_2(X,\mathbb{Z})$,
\item[(iii)] and a quasi-stable quotient sequence
$$0 \rarr S \rarr \com^n \otimes \oh_C \rarr Q \rarr 0.$$
\end{enumerate}
Stability is defined by the ampleness of
$$\omega_C(p_1\ldots+p_m) \otimes f^*(L^3) 
\otimes (\wedge^{r} S^*)^{\otimes \epsilon}$$
on $C$ for every strictly positive $\epsilon\in \mathbb{Q}$.
We leave the details to the reader. The moduli space
is independent of the choice of $L$.

The  
moduli space carries a 2-term obstruction theory and
a virtual class. The corresponding descendent theory is
equivalent to the Gromov-Witten theory of $X\times \mathbb{G}(r,n)$
by straightforward modification of 
the arguments used to prove Theorem 4.

There is no reason to restrict to the trivial bundle in
(iii) above.
We may fix a rank $n$ vector bundle
$$B \rarr X$$
and replace the quasi-stable quotient sequence by
$$0 \rarr S \rarr f^*(B) \rarr Q \rarr 0.$$
The corresponding theory is perhaps equivalent to the
Gromov-Witten theory of the Grassmannian bundle over $X$
associated to $B$.
As $B$ may not split, a torus action may not
be available. The strategy of the proof of Theorem 4 does 
not directly apply.

A stranger replacement of the trivial bundle can be made even when
$X$ is a point. We may choose the quotient sequence to be
$$0 \rarr S \rarr H^0(C,\omega_C) \otimes \oh_C \rarr Q \rarr 0.$$
The middle term is essentially the pull-back of the Hodge bundle from
the moduli space of curves. 

\begin{Question}
What do integrals over the 
moduli of  stable Hodge quotients correspond to 
in Gromov-Witten theory?
\end{Question}

\vspace{+8 pt}
\noindent
Department of Mathematics\\
University of Illinois at Chicago\\
alina@math.uic.edu

\vspace{+8 pt}
\noindent
Department of Mathematics\\
University of California, San Diego\\
doprea@math.ucsd.edu

\vspace{+8 pt}
\noindent
Department of Mathematics\\
Princeton University\\
rahulp@math.princeton.edu.

\end{document}